\definecolor{citation}{rgb}{0.2,0.58,0.2} 
\definecolor{formula}{rgb}{0.1,0.2,0.6}
\definecolor{url}{rgb}{0.3,0,0.5}
\newcommand{\reqnomode}{\tagsleft@false}
\def\dx{\,{\rm d}x}
\def \d{\,{\rm d}}
\def \diver{\,{\rm div}}
\def\dist{\,{\rm dist}}
\def\eup{\,{\rm ess\, sup}}
\def\eif{\,{\rm ess\, inf}}
\def\diam{\,{\rm diam}}
\DeclareRobustCommand*{\bfseries}{%
  \not@math@alphabet\bfseries\mathbf
  \fontseries\bfdefault\selectfont
  \boldmath
}
\DeclareMathOperator*{\osc}{osc}
\newlength{\defbaselineskip}
\newcommand{\setlinespacing}[1]
           {\setlength{\baselineskip}{#1 \defbaselineskip}}
\newcommand{\mint}{\mathop{\int\hskip -1,05em -\, \!\!\!}\nolimits}
\newtheorem{theorem}{Theorem}
\newtheorem{corollary}{Corollary}[section]
\newtheorem{definition}{Definition}
\newtheorem{remark}{Remark}[section]
\newtheorem{lemma}{Lemma}[section]
\newtheorem{proposition}{Proposition}[section]
\newcommand{\N}{\mathbb{N}}
\numberwithin{equation}{section}
\newcommand{\rr}{\varrho}
\newcommand{\snr}[1]{\lvert #1\rvert}
\newcommand{\nr}[1]{\lVert #1 \rVert}
\newcommand{\vp}{\varphi}
\newcommand{\cJp}{{\mathcal{J}_{\theta,\vp(\cdot)}}}
\newcommand{\ainc}{{{\rm (aInc)}_p}}
\newcommand{\adec}{{{\rm (aDec)}_q}}
\newcommand{\rp}{{[0,\infty)}}
\newcommand{\rn}{{\mathbb{R}^{n}}}
\newcommand{\data}{\textit{\texttt{data}}}
\title[%{\color{magenta} ??? Obstacle problem and ??? }
Removable sets in elliptic equations with Musielak-Orlicz growth]{Removable sets in elliptic equations\\ with Musielak-Orlicz growth
%Removable sets for continuous solutions\\ to quasilinear non-uniformly elliptic equations
}
\author{Iwona Chlebicka} \address{Iwona Chlebicka\\Faculty of Mathematics, Informatics and Mechanics, University of Warsaw\\ul. Banacha 2, 02-097 Warsaw, Poland} \email{\texttt{iskrzypczak@mimuw.edu.pl}}
\author{Arttu Karppinen}  \address{Arttu Karppinen\\Department of Mathematics and Statistics, FI-20014 University of Turku, Finland} \email{\texttt{arttu.a.karppinen@utu.fi}}
\begin{document}

\subjclass[2010]{35J60, 35J70\vspace{1mm}} %%ALERT CHECK 35J60 23J70 35B65 35D40

\keywords{Measure data problems, Obstacle problem, Potential estimates, Removable sets\vspace{1mm}}

\thanks{{\it Acknowledgements.}\ I. Chlebicka is supported by NCN grant no. 2016/23/D/ST1/01072. A. Karppinen was partly supported by Turku University Foundation. Part of the research was done while A. Karppinen was visiting University of Warsaw. 
\vspace{1mm}}

\begin{abstract}
 We characterize, in the terms of intrinsic Hausdorff measures, the size of~removable sets for H\"older continuous solutions to elliptic equations with Musielak-Orlicz growth. In the general case we provide a result in the new scale that is more relevant and captures -- as special cases -- the classical results, slightly refines the ones provided for problems stated in the variable exponent and double phase spaces and essentially improves the known one in the Orlicz case. 
\end{abstract}
\vspace{3mm}
%{\small \tableofcontents}

\setlinespacing{1.08}
 
\maketitle
\section{Introduction}

\subsection*{Objectives.} We analyze the fine properties of solutions to quasilinear elliptic equations of~a~form
\begin{flalign}\label{A0}
-\diver\, A(x,Du)=0 \ \ \mbox{in} \ \ \Omega,
\end{flalign} where $\Omega\subset\mathbb{R}^n$, $n\ge 2$ is   open and bounded, while the operator satisfies nonstandard growth and coercivity conditions expressed by the means of an inhomogeneous function $\vp:\Omega\times[0,\infty)\to[0,\infty)$ within the framework presented in Section~\ref{ssec:gen-or}. Such conditions place the energy solution to~\eqref{A0} in the Musielak-Orlicz-Sobolev space $W^{1,\vp(\cdot)}(\Omega)$ defined in Section~\ref{ssec:setting} and embrace a natural scope of variable exponent, Orlicz, double phase spaces and their various combinations. Recently, employing this framework became a well-settled stream in nonlinear analysis, see a survey~\cite{IC-pocket}. In turn, we admit in~\eqref{A0} not only (weighted versions of) Laplacian, $p$-Laplacian, $p(x)$-Laplacian, but also their Orlicz and double phase counterparts. Calling solutions to~\eqref{A0} when $-\diver\, A(x,Du)=-\Delta u$ harmonic functions and their natural generalization when $A(x,z)\cdot z\sim |z|^p$ (with the celebrated case of $p$-Laplacian $-\Delta_p u=
-\diver(\snr{Du}^{p-2}Du)$) $p$-harmonic functions, we say that we examine $\mathcal{A}_{\vp(\cdot)}$-harmonic maps, where  the operator   $\mathcal{A}_{\vp(\cdot)}$   defined on $W^{1,\vp(\cdot)}(\Omega)$ acting as
\begin{flalign}\label{calAH}
\langle\mathcal{A}_{\vp(\cdot)}v,w\rangle:=\int_{\Omega}A(x,Dv)\cdot Dw \ \dx\quad \text{for}\quad w\in C^{\infty}_{c}(\Omega).
\end{flalign}

There are various formalisms to describe Musielak-Orlicz spaces as a setting for partial differential equations. We employ here the one provided in~\cite{hahab}, but we refer to~\cite{IC-b} for another possibility. The main features of the spaces are inhomogeneity (space--dependence) initially investigated in the context of the Lavrentiev phenomenon and general growth introduced for the elasticity theory. Let us refer to a selection of very recent results falling into the scope of the existence and regularity theory in this setting to stress the attention the branch enjoys~\cite{bacomi-st,IC-gradest,IC-lower,IC-measure,comi,deoh,depq,demi,gszg,hahakl,ka,kale,Ok}.

 %Nonlinear version of the classical potential theory has been developed by several authors \cite{hekima,kima,kumi,lima,ma,mik,tu}.

{\em Removability. } Consider a harmonic function defined on a subdomain of $\Omega$, that is on $\Omega\setminus E$ for some measurable $E$. The set $E$ is classically called removable if the harmonic function has a~continuous extension which is harmonic in the whole domain, cf.~\cite{ca}. Here we call  a set $E$ removable for H\"older continuous solutions to~\eqref{A0} if an  $\mathcal{A}_{\vp(\cdot)}$-harmonic function in $\Omega \setminus E$ has a H\"older continuous  $\mathcal{A}_{\vp(\cdot)}$-harmonic extension to $\Omega$. The size of removable sets for $p$-harmonic functions is studied sharply in~\cite{kizo}. Namely, having a~relatively closed subset $E\subset \Omega$  with $s$-dimensional Hausdorff measure zero (for $s>n-p$) and $u$  $p$-harmonic in $\Omega\setminus E$, it is proven for what $s$ the extension $\tilde{u}$ is $p$-harmonic in the whole $\Omega$. For the already classical results we refer to \cite{ca,koma,kizo} and to \cite{ono,hi} for problems involving also lower-order terms. In the nonstandard growth framework, the problem of removability has been studied in the case of~variable exponent spaces in \cite{fush,lat}, Orlicz spaces in \cite{chaly} and  double phase spaces in \cite{ChDF}. Following the last mentioned contribution we employ the  intrinsic capacities and the intrinsic Hausdorff measures introduced recently in \cite{bahaha} and \cite{demi}, respectively.  It is commented below how our accomplishment embraces and deepens the known ones.

\subsection*{Main result} Throughout the paper $\Omega \subset \rn$, $n\ge 2$, is an open bounded set. Let a vector field $A:\Omega\times\rn\to\rn$ be a  Caratheodory's function, that is 
\begin{flalign}\label{A-Car}\text{ $x\mapsto A(x,\cdot)$ is measurable $\quad$ and $\quad$ $z\mapsto A(\cdot,z)$ is continuous.}
\end{flalign} Assume further that the following growth and coercivity assumptions hold true for almost all $x\in \Omega$ and all  $z\in \mathbb{R}^{n}\setminus \{0\}$:
\begin{flalign}\label{A}
\begin{cases}
\ \snr{A(x,z)} \le c_1\vp\left(x,\snr{z}\right)/|z|,\\
\ c_2 {\vp\left(x,\snr{z} \right)} \le A(x,z)\cdot z%\\
%\ \snr{A(x_{1},z)-A(x_{2},z)}\le L\snr{a(x_{1})-a(x_{2})}\snr{z}^{q-1},
\end{cases}
\end{flalign} 
with absolute constants $c_1,c_2>0$ and some function $\vp:\Omega\times\rp\to\rp$ being measurable with respect to the first variable, convex with respect to the second one and   satisfying natural  non-degeneracy and balance conditions (A0), (A1), (aInc)$_p$ and (aDec)$_q$ with some $1<p\leq q\leq n$, described in detail in Section~\ref{sec:prelim}.  Let us notice that restricting to growth by a power below the dimension is just fixing attention. Indeed, when the operator has quicker growth, the solutions as functions from $W^{1,\vp(\cdot)}(\Omega)$ are H\"older continuous.

 Moreover, let $A$ be monotone in the sense that 
\begin{flalign}\label{A-monotone}
0< \,\langle A(x,z_{1})-A(x,z_{2}),z_{1}-z_{2}\rangle\quad\text{for almost all $x\in \Omega$ and any distinct $z_{1},z_{2}\in \mathbb{R}^{n}$.}
\end{flalign}   We describe the volume of removable sets for H\"older continuous $\mathcal{A}_{\vp(\cdot)}$--harmonic maps in the terms of the intrinsic Hausdorff measures $\mathcal{H}_{\cJp}$ defined in Section~\ref{sec:haus} with the use of function $\cJp$ given by 
\begin{equation}\label{Jphi}
\cJp(B_R(x_0))= R^{-\theta} \int_{B_R(x_0)}\vp(x, R^{\theta-1}) \ \dx \quad  x\in\Omega,\ \ R>0,
\end{equation}
where $\theta\in(0,1]$.

Our main results read as follows.
\begin{theorem}\label{T6}
Suppose $\Omega \subset \mathbb{R}^n$, $n\ge 2$, is a bounded  open set and $A$ satisfies~\eqref{A-Car}--\eqref{A-monotone} with a convex $\Phi$--function $\vp:\Omega\times\rp\to\rp$ %being measurable with respect to the first variable, convex with respect to the second one and
satisfying (A0), (A1), (aInc)$_p$ and (aDec)$_q$ with some $1<p\leq q\leq n$. Let $E\subset \Omega$ be a closed subset and $u\in C(\Omega)\cap W^{1,\vp(\cdot)}(\Omega\setminus E)$ be a continuous solution to \eqref{A0} in $\Omega\setminus E$ such that there exist some $C_u>0$ and $\theta\in (0,1]$
\begin{flalign*}
\snr{u(x_{1})-u(x_{2})}\le C_u\snr{x_{1}-x_{2}}^{\theta}\qquad\text{for all $\ x_{1}\in E$, $\ x_{2}\in \Omega$.}
\end{flalign*}
 If $\mathcal{H}_{\cJp}(E)=0$ with $\cJp$ as in~\eqref{Jphi}, then $u$ is {$\mathcal{A}_{\vp(\cdot)}$--harmonic} in $\Omega$.
\end{theorem}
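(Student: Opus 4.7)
Fix $w\in C_c^\infty(\Omega)$; the goal is to show $\int_\Omega A(x,Du)\cdot Dw\,\dx=0$. Since $\mathcal{H}_{\cJp}(E)=0$, for every $\varepsilon>0$ I can cover $E\cap \supp w$ by countably many balls $B_{r_i}(x_i)$ with $x_i\in E$ and $\sum_i\cJp(B_{2r_i}(x_i))<\varepsilon$. For each ball pick a cutoff $\eta_i$ that vanishes on $B_{r_i/2}(x_i)$, equals $1$ outside $B_{r_i}(x_i)$, and satisfies $\snr{D\eta_i}\le C/r_i$; set $\eta_\varepsilon=\inf_i\eta_i$. The product $w\eta_\varepsilon$ is supported away from $E$ and lies in $W^{1,\vp(\cdot)}_0(\Omega\setminus E)$, so testing the equation \eqref{A0} against it yields
\[
\int_{\Omega}A(x,Du)\cdot Dw\,\eta_\varepsilon\,\dx \;=\; -\int_{\Omega}A(x,Du)\cdot D\eta_\varepsilon\,w\,\dx.
\]

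\medskip

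\noindent\textbf{Reduction to the intrinsic Hausdorff sum.} To estimate the right-hand side, on each $B_{2r_i}(x_i)$ the H\"older assumption gives $\snr{u(x)-u(x_i)}\le C_u r_i^\theta$. Testing the equation with $(u-u(x_i))\eta_i^q$ and invoking the structure conditions \eqref{A} together with convexity of $\vp$ produces a Caccioppoli-type inequality
\[
\int_{B_{r_i}(x_i)}\vp(x,\snr{Du})\,\dx \;\le\; C\int_{B_{2r_i}(x_i)}\vp\bigl(x,r_i^{\theta-1}\bigr)\dx \;=\; C\,r_i^\theta\,\cJp(B_{2r_i}(x_i)).
\]
The growth bound $\snr{A(x,z)}\le c_1\vp(x,\snr{z})/\snr{z}$, combined with $\ainc$ applied by splitting $\snr{Du}\lessgtr r_i^{\theta-1}$, gives the pointwise estimate
\[
\frac{\snr{A(x,Du)}}{r_i}\;\le\; C\,r_i^{-\theta}\bigl(\vp(x,\snr{Du})+\vp(x,r_i^{\theta-1})\bigr).
\]
Multiplying by $\snr{D\eta_\varepsilon}\le C/r_i$, integrating over the $i$-th annulus, inserting the Caccioppoli bound and summing,
\[
\Bigl|\int_{\Omega}A(x,Du)\cdot D\eta_\varepsilon\,w\,\dx\Bigr| \;\le\; C\nr{w}_\infty\sum_i \cJp(B_{2r_i}(x_i)) \;<\; C\nr{w}_\infty\,\varepsilon.
\]

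\medskip

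\noindent\textbf{Passage to the limit and main obstacles.} Letting $\varepsilon\to 0$, the left-hand side of the first display converges to $\int_{\Omega}A(x,Du)\cdot Dw\,\dx$ by Lebesgue dominated convergence; the required global integrability of $\snr{A(x,Du)}\snr{Dw}$ on $\supp w$ is provided by summing the Caccioppoli bounds above over a single cover, while continuity of $u$ across $E$ identifies the distributional gradient on all of $\Omega$ (the complement of the covered region being compactly contained in $\Omega\setminus E$, where $u\in W^{1,\vp(\cdot)}$ by assumption). Since the right-hand side is $O(\varepsilon)$, we conclude $\int_\Omega A(x,Du)\cdot Dw\,\dx=0$ for every $w\in C_c^\infty(\Omega)$, i.e.\ $u$ is $\mathcal{A}_{\vp(\cdot)}$-harmonic in $\Omega$. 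The main technical hurdle is the Caccioppoli estimate in the Musielak-Orlicz setting for a test function degenerating near $E$: it requires Young's inequality with the conjugate $\vp^{*}$ and the balance conditions (A0), (A1) to transfer $\vp(\cdot,t)$ between nearby base points within each small ball, followed by $\ainc$ and $\adec$ to absorb the gradient term on the left. A secondary issue is verifying that the single-cover Caccioppoli bounds truly control $\snr{Du}$ in $L^{\vp(\cdot)}$ on a neighbourhood of $\supp w$; this is where the smallness of the covering sum $\sum_i\cJp(B_{2r_i}(x_i))$ is essential.
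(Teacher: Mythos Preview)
Your cutoff strategy is natural, but the Caccioppoli step hides a genuine obstruction. To obtain $\int_{B_{r_i}}\vp(x,\snr{Du})\,\dx\le C\int_{B_{2r_i}}\vp(x,r_i^{\theta-1})\,\dx$ one must test \eqref{A0} against $(u-u(x_i))\zeta_i^q$ for a cutoff $\zeta_i\in C_c^\infty(B_{2r_i}(x_i))$ (your $\eta_i$, which equals $1$ outside $B_{r_i}$, does not give a compactly supported test function). But $x_i\in E$, and for a general closed set $E$ the ball $B_{2r_i}(x_i)$ contains other points of $E$ at every scale; the test function is therefore not in $W^{1,\vp(\cdot)}_0(\Omega\setminus E)$, while you are only told that $u$ solves \eqref{A0} in $\Omega\setminus E$. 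If you try to repair this by inserting an additional factor that vanishes near $E$, the resulting extra term $\int A(x,Du)\cdot D\eta_\varepsilon\,\zeta_i^q(u-u(x_i))\,\dx$ is controlled by precisely the quantity $\sum_j r_j^{-\theta}\int_{B_{r_j}}\vp(x,\snr{Du})\,\dx$ you are trying to bound, and the argument becomes circular. A related gap: before writing $\int_\Omega A(x,Du)\cdot Dw\,\dx$ you must know $u\in W^{1,\vp(\cdot)}_{\mathrm{loc}}(\Omega)$; continuity of $u$ does not by itself identify the distributional gradient across $E$, and ``summing the Caccioppoli bounds over a single cover'' presupposes the very estimate in question.

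The paper resolves both issues at once by replacing $u$ with the solution $v\in\mathcal{K}_u(U)$ of the obstacle problem \eqref{obs} on $U\Subset\Omega$. Since $v$ is a supersolution of \eqref{A0} in the \emph{whole} of $U$, the Caccioppoli inequalities of Lemmas~\ref{lem-cacc}--\ref{lem-cacc-osc} apply to $v$ on balls centred at points of $E$. Combined with the supremum and weak Harnack estimates for the obstacle problem (Propositions~\ref{prop:v-bounded} and~\ref{prop:inf-estimate}), this yields the decay $\mu(B_\rr(\bar x))\le c\,\cJp(B_\rr(\bar x))$ for the Riesz measure $\mu=-\diver A(x,Dv)$ (Lemma~\ref{osc}), whence $\mathcal{H}_{\cJp}(E)=0$ forces $\mu(E\cap U)=0$. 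A separate comparison argument gives $\mu(U\setminus E)=0$, so $v$ is $\mathcal{A}_{\vp(\cdot)}$-harmonic in $U$; repeating with $\hat A(x,z)=-A(x,-z)$ produces $\hat v$ with $-\hat v\le u\le v=-\hat v$, hence $u=v$. The obstacle problem is thus not a technicality but the device that manufactures an object admitting energy estimates across the singular set.
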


\begin{corollary} Suppose that $E$ is a closed set in $\Omega$ and $u\in C^{0,\theta}(\Omega)$ with $0<\theta\leq 1$ is {$\mathcal{A}_{\vp(\cdot)}$--harmonic} in $\Omega\setminus E$, where $\mathcal{A}_{\vp(\cdot)}$ is given by~\eqref{calAH} with $A$ and $\vp(\cdot)$ satisfying the assumptions in Theorem \ref{T6}. 
If   $\mathcal{H}_{\cJp}(E)=0$, then $u$ is {$\mathcal{A}_{\vp(\cdot)}$--harmonic} in $\Omega$.
\end{corollary}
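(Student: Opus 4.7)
The plan is to deduce the Corollary directly from Theorem~\ref{T6} by verifying its hypotheses on $u$: continuity on $\Omega$, the pointwise H\"older bound between $E$ and $\Omega$, and global membership $u\in W^{1,\vp(\cdot)}(\Omega\setminus E)$. The first two are immediate from $u\in C^{0,\theta}(\Omega)$: trivially $u\in C(\Omega)$, and with $C_u:=[u]_{C^{0,\theta}(\Omega)}$ one has $\snr{u(x_1)-u(x_2)}\le C_u\snr{x_1-x_2}^{\theta}$ for all $x_1,x_2\in\Omega$, in particular whenever $x_1\in E$.

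The substantive step is to upgrade the local regularity $u\in W^{1,\vp(\cdot)}_{\mathrm{loc}}(\Omega\setminus E)$, inherent in $u$ being $\mathcal{A}_{\vp(\cdot)}$-harmonic there, to global integrability of $\vp(\cdot,\snr{Du})$ on $\Omega\setminus E$. The plan is a Whitney-type covering $\{B_i=B(x_i,r_i)\}$ of $\Omega\setminus E$ with $r_i\sim\dist(x_i,E\cup\partial\Omega)$ and bounded overlap; on each $B_i$ a Caccioppoli inequality for $\mathcal{A}_{\vp(\cdot)}$-harmonic maps with test constant $k=u(x_i)$ would give
\[
\int_{B_i}\vp(x,\snr{Du})\,\dx\le C\int_{2B_i}\vp\!\left(x,\tfrac{\snr{u-u(x_i)}}{r_i}\right)\dx.
\]
The global H\"older bound implies $\snr{u(y)-u(x_i)}/r_i\le C_u'r_i^{\theta-1}$ on $2B_i$; combined with the monotonicity and doubling of $\vp$ provided by $\adec$, the right-hand side is dominated by $C\,r_i^{\theta}\cJp(2B_i)$ in view of~\eqref{Jphi}. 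Summing over $i$ with bounded overlap yields the required global integrability, and Theorem~\ref{T6} then closes the argument.

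The main obstacle is convergence of the Whitney sum $\sum_i r_i^{\theta}\cJp(2B_i)$ as balls accumulate near $E$: since $\theta\le 1$, the factor $r_i^{\theta-1}$ blows up as $r_i\to 0$, so finiteness is not automatic and the naive bound does not suffice. The remedy is to match the Whitney geometry of $\Omega\setminus E$ against an intrinsic covering witnessing $\mathcal{H}_{\cJp}(E)=0$; the balance conditions (A0), (A1) together with $\ainc$, $\adec$ are precisely what allow $\vp$ to be compared across different scales and locations so that this matching becomes quantitative. This is the intended role of the intrinsic Hausdorff measure $\mathcal{H}_{\cJp}$, in the spirit of the reduction developed for the double-phase case in~\cite{ChDF}.
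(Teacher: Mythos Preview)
The paper gives no separate proof of this Corollary: it is intended as an immediate consequence of Theorem~\ref{T6}. The passage from the Corollary's hypotheses to those of Theorem~\ref{T6} is read as tautological: $u\in C^{0,\theta}(\Omega)$ yields $u\in C(\Omega)$ and the H\"older bound with $C_u=[u]_{0,\theta;\Omega}$, while ``$u$ is $\mathcal{A}_{\vp(\cdot)}$--harmonic in $\Omega\setminus E$'' is taken to include $u\in W^{1,\vp(\cdot)}(\Omega\setminus E)$, in line with the paper's convention that $\mathcal{A}_{\vp(\cdot)}$ acts on $W^{1,\vp(\cdot)}$ (see~\eqref{calAH}). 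Under this reading there is nothing to prove beyond invoking Theorem~\ref{T6}.

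You are working under the alternative (and also reasonable) reading that $\mathcal{A}_{\vp(\cdot)}$--harmonicity only entails $u\in W^{1,\vp(\cdot)}_{\mathrm{loc}}(\Omega\setminus E)$, and you then attempt to manufacture the missing global integrability via a Whitney cover and Caccioppoli. That is a genuinely harder statement than what the paper asserts, and your argument does not close. The obstruction you yourself flag is real: the Whitney sum $\sum_i\int_{2B_i}\vp(x,r_i^{\theta-1})\,\dx$ has no reason to be finite, since $r_i^{\theta-1}\to\infty$ as the balls accumulate near $E$. Your proposed ``remedy''---matching the Whitney geometry of $\Omega\setminus E$ against an economical cover of $E$ witnessing $\mathcal{H}_{\cJp}(E)=0$---is not a mechanism that can work as stated. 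The two families of balls cover \emph{disjoint} sets: Whitney balls sit in $\Omega\setminus E$ with radii comparable to their distance to $E$, while the $\mathcal{H}_{\cJp}$-cover consists of balls centred on $E$ with arbitrarily small radii chosen to make $\sum_j\cJp(B_j)$ small. There is no natural correspondence between them, and smallness of $E$ in the sense $\mathcal{H}_{\cJp}(E)=0$ does not bound the Whitney sum over $\Omega\setminus E$; if anything, a \emph{thinner} $E$ produces \emph{more} Whitney balls at small scales, not fewer. The reference to~\cite{ChDF} does not help here: the reduction there, as in the present paper, takes the Sobolev membership as part of the hypothesis and uses the intrinsic Hausdorff measure only to control the measure $\mu$ generated by the obstacle-problem solution, not to sum a Caccioppoli bound over a Whitney decomposition.

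In short: relative to the paper, you are attempting to prove a strictly stronger statement, and the hard step of your approach is left as a heuristic that, on inspection, does not go through.
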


\noindent Let us additionally mention that Corollary~\ref{coro:H0} provides that the sets of  finite $\mathcal{H}_{\vp(\cdot)}$--measure are removable for $\mathcal{A}_{\vp(\cdot)}$-harmonic functions.

\subsection*{Special cases}

Let us specialize our result to several known result that we retrieve or extend. We start with the celebrated classical case.

\subsubsection*{$p$-Laplacian. } We have the following sharp conclusion from Theorem~\ref{T6} for solutions to $-\Delta_p u=0$ with $1<p<\infty$, which retrieves the already classical result of~\cite{kizo}.\\
{\em When $(n-p)/(p-1)<\theta\leq 1$, then a closed set $E$ is removable for $\theta$--H\"older continuous $p$-harmonic function if and only if $E$ is of $(n-p+\theta(p-1))$--Hausdorff measure zero.} \\
Again, since sets of $p$-capacity zero are removable for bounded $p$-harmonic functions, the lower bound for admissible $\theta$ is not a restriction.

\subsubsection*{Nonstandard growth operators. }
We shall present here the extentions of results of removability provided for nonstandard growth problems. Note that in any nonstandard growth setting there is a gap between modular form of estimates and the norm ones making the classical tool of H\"older inequality far less useful. Special (power-type) form of variable exponent spaces or double phase spaces enables to pass it by. In previous studies in the Orlicz growth case~\cite{chaly} the authors agreed on loosing some information by the use of rough estimates. Sticking to modular form of the final estimate we improve several existing results by proving the result in the new and far  more relevant scale.

\subsubsection*{$p(x)$-Laplacian. } We take $\vp(x,s)=s^{p(x)}$, where $p: \Omega \to \mathbb{R}$ is a variable exponent, such that $1<p^{-}_{\Omega} \leq p(x)\leq p^{+}_{\Omega} < \infty$ and $p$ satisfies $\log$-H\"older condition (a special case of (A1))%, which is equivalent to boundedness of $r^{p^{-}_{B} -p_B^{+}}$ %IC: I don't see a gain from writing it.
. Under these assumptions we study solutions to
\begin{align*}
0=-\Delta_{p(x)} u=-\diver(\snr{Du}^{p(x)-2}Du).
 \end{align*} Theorem~\ref{T6} provides removability of~$\mathcal{H}_{\cJp}$-Hausdorff measure zero sets with  
\begin{align*}
{\cJp}(B_R(x_0))=\int_{B_R(x_0)}R^{-p(x)+\theta(p(x)-1)}\,\dx.
\end{align*}
The results extend the known results from~\cite{fush,lat}, where the provided measure comes in fact from the easiest bounds from above to ours, expressed by the means of supremum and/or infimum of $p$.
%When we consider a log-H\"older continuous exponent $p:\Omega\to[p_1,p_2]$ with $1<p_-\leq p_+<\infty$, where $p^{+}_{\Omega}= \sup_{x \in \Omega} p(x)$ and $p^{-}_{\Omega}=\inf_{x \in \Omega} p(x)$, solutions to
%\[0=-\Delta_{p(x)} u=-\diver(\snr{Du}^{p(x)-2}Du).\]
%\cite{fush,lat},  
%In variable exponent context the assumption (A1) is called $\log$-H\"older continuity, which is equivalent to the boundedness of $r^{p^{-}_{B} -p_B^{+}}$. If we consider \eqref{Jphi} for $\vp(x,t)=t^{p(x)}$, we see with the use of $\log$-H\"older continuity that
%\begin{align*}
%R^{\theta(p^{-} -1)} \int_{B_R} R^{-p(x)} \ \dx \leq c R^{n-p^{+} + \theta(p^{-}-1)} \leq c R^{\frac{p^{-}(n-p^{+})}{p^{+}} +\theta(p^{-}-1)}.
%\end{align*}
%This is the result found in \cite{fush}. 

\subsubsection*{Double phase growth operators. } Within the framework developed in~\cite{comi}, in \cite{ChDF} removable sets are characterized for solutions to \[0=-\diver\, A(x,Du)=-\diver\left(\omega(x)\big(\snr{Du}^{p-2}+a(x)\snr{Du}^{q-2}\big)Du\right)\] with $1<p\leq q<\infty$, possibly vanishing weight $0\leq a\in C^{0,\alpha}(\Omega)$ and $q/p\leq 1+\alpha/n$  (a~special case of (A1); sharp for density of regular functions~\cite{comi})  and with  a bounded, measurable, separated from zero weight $\omega$.  In this case we prove that $\mathcal{H}_{\cJp}$-Hausdorff measure zero sets are removable, where
\begin{align*}
\cJp(B_R(x_0))&= R^{-\theta} \int_{B_R(x_0)} R^{p(\theta-1)} + a(x)R^{q(\theta-1)} \, \dx\\
%&\leq c R^{\theta(p-1)} \int_{B_R(x_0)} R^{-p} + a(x)R^{-q} \ \dx\\
&\leq c \int_{B_R(x_0)} R^{-p\left(1-\frac{\theta}{q}(p-1)\right)}+a(x)^{1-\frac{\theta}{q}(p-1)}R^{-q\left(1-\frac{\theta}{q}(p-1)\right)}\, \dx,
\end{align*}
which implies the result of~\cite{ChDF}.

As a new study we analyze the borderline case between the double phase space and the variable exponent one, cf.~\cite{bacomi-st}. Namely, consider solutions to
\[0=-\diver A(x,Du)=-\diver\left(\omega(x)(\snr{Du}^{p-2}\big(1+a(x)\log({\rm e}+\snr{Du})\big)Du\right)\] with $1<p<\infty$, log-H\"older continuous $a$ and  a bounded, measurable, separated from zero weight $\omega$. Since this growth condition always satisfies (A0), (aInc)$_p$ and (aDec)$_{p+\varepsilon}$ with arbitrarily small $\varepsilon>0$ and it satisfies (A1) if the weight is H\"older continuous \cite[Proposition 7.2.5]{hahab}, our main result covers also this growth as a new result. We  provide removability of~$\mathcal{H}_{\cJp}$-Hausdorff measure zero sets with  
\begin{align*}
\cJp(B_R(x_0))= R^{-p+\theta(p-1)} \int_{B_R(x_0)} 1 + a(x)\log\left({\rm e}+ R^{\theta-1}\right) \, \dx.
\end{align*}

\subsubsection*{Orlicz growth operator.}
Having an $N$-function $B\in\Delta_2\cap\nabla_2$, we can allow for problems with the leading part of the operator with growth driven by $\vp(x,s)=B(s)$ with an example of \[0=-\diver \, A(x,Du)=-\diver\left(\omega (x)\tfrac{B(\snr{Du})}{\snr{Du}^2}Du\right)\]
with a bounded, measurable, and separated from zero weight $\omega$. Such growth conditions are equivalent to existence of the indices   $p$ and $q$ such that
\begin{align*}
1<p \leq \dfrac{B'(s)s}{B(s)}\leq q<\infty.
\end{align*}
Theorem~\ref{T6} provides removability of~$\mathcal{H}_{\cJp}$-Hausdorff measure zero sets with 
\begin{align*}
{\cJp}(B_R(x_0))\leq c R^{n-\theta}B(R^{\theta-1})
\end{align*}
and thus we improve the results from~\cite{chaly},   where the final claim follows from ours by the rough estimates expressed by the means of indices $p$ and $q$.

\subsubsection*{Other Musielak--Orlicz growth operators.}$ $ \\
To give more new examples one can consider problems stated in weighted Orlicz (if $\vp(x,s)=a(x)B(s)$), variable exponent double phase (if $\vp(x,s)=s^{p(x)}+a(x)s^{q(x)}$), or multi phase Orlicz cases (if $\vp(x,s)=\sum_i a_i(x)B_i(s)$), as long as $\vp(x,s)$ is comparable to a~function doubling  with respect to the second variable and it satisfies the non-degeneracy and continuity assumptions (A0)-(A1). Then, the size of removable sets is characterized in the general form provided in Theorem~\ref{T6} with the use of $\mathcal{H}_{\cJp}$-Hausdorff measure with ${\cJp}$ given by~\eqref{Jphi}.

\subsection*{Methods and remarks on the obstacle problem} The main steps of the proof follow the ideas of~\cite{kizo} adjusted to the inhomogeneous and general growth setting.  We shall use basic regularity properties of~solutions to the obstacle problem associated to~\eqref{A0}.  We consider the set
\begin{flalign}\label{con}
\mathcal{K}_{\psi,g}(\Omega):=\left\{ v\in W^{1,\vp(\cdot)}(\Omega)\colon v\ge \psi \ \  \mbox{a.e. in} \ \Omega \ \ \mbox{and} \ \ v-{g}\in W^{1,\vp(\cdot)}_{0}(\Omega)  \right\},
\end{flalign}
where $\psi\in W^{1,\vp(\cdot)}(\Omega)$ is the obstacle and $g \in W^{1,\vp(\cdot)}(\Omega)$ is the boundary datum. By a~solution to the obstacle problem we mean a function $v\in \mathcal{K}_{\psi,g}(\Omega)$ satisfying
\begin{flalign}\label{obs}
\int_{\Omega}A(x,Dv)\cdot D(w-v) \ \dx \ge 0 \ \ \mbox{for all } \ w\in \mathcal{K}_{\psi,g}(\Omega).
\end{flalign} 
By a supersolutions to \eqref{A0} we mean $\tilde{v}\in W^{1,\vp(\cdot)}(\Omega)$ satisfying
\begin{flalign}\label{sux}
\int_{\Omega}A(x,D\tilde{v})\cdot Dw \ \dx \ge0 \quad \mbox{for all non-negative} \ w\in W^{1,\vp(\cdot)}_{0}(\Omega).
\end{flalign}
Notice that a solution to problem \eqref{obs} is a supersolution to \eqref{A0}, we just need to test \eqref{obs} against $w:=v+\tilde{w}$, where $\tilde{w} \in W^{1,\vp(\cdot)}_{0}(\Omega)$ is any non-negative function. Since $\tilde{w}\in \mathcal{K}_{\psi,g}(\Omega)$,  the outcome is precisely the variational inequality \eqref{sux}. We note the following basic information on the existence and regularity for the obstacle problem, which are instrumental for us in the proof of Theorem~\ref{T6}.  Section~\ref{sec:obstacle} is devoted to the following results on solutions to the obstacle problem.

\begin{theorem}\label{T4} 
Suppose $A$ satisfies~\eqref{A-Car}--\eqref{A-monotone} in $\Omega\subset\rn$, $n\ge 2$, with a convex $\Phi$-function $\vp:\Omega\times\rp\to\rp$ satisfying (A0), (A1), (aInc)$_p$ and (aDec)$_q$ with some $1<p\leq q<\infty$. Let $\psi,g\in W^{1,\vp(\cdot)}(\Omega)$ be such that $\mathcal{K}_{\psi,g}(\Omega)\not =\emptyset$. Then, there exists a unique $v\in \mathcal{K}_{\psi,g}(\Omega)$, solution to the obstacle problem \eqref{obs}. Moreover, the following assertions hold true.
\begin{itemize} 
\item[-]{(Continuity and $\mathcal{A}_{\vp(\cdot)}$-harmonicity). If $\psi\in W^{1,\vp(\cdot)}(\Omega)\cap C(\Omega)$, then $v$ is continuous and solves \eqref{A0} in the open set $\{x\in \Omega\colon v(x)>\psi(x)\}$.}
\vspace{1mm}
\item[-]{(H\"older regularity). If $\psi \in W^{1,\vp(\cdot)}(\Omega)\cap C^{0,\theta}(\Omega)$ for some $\theta\in (0,1]$, then $v\in C^{0,\theta}_{\mathrm{loc}}(\Omega)$ and, for all open sets $\widetilde{\Omega}\Subset \Omega$, there holds
\begin{flalign}\label{v-Holder}
[v]_{0,\theta;\tilde{\Omega}}\le c(\data,\nr{\vp(\cdot,Dv)}_{L^{1}(\Omega)},\nr{\psi}_{L^{\infty}(\Omega)},[\psi]_{0,\theta})
\end{flalign} with $\data:=(n,c_1,c_2,p,q,L_p,L_q)$ -- the parameters describing the growth of $A$ and~$\vp$.}
\end{itemize}
\end{theorem}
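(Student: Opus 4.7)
The plan is to establish the four assertions — existence, uniqueness, continuity together with $\mathcal{A}_{\vp(\cdot)}$-harmonicity off the contact set, and interior Hölder regularity — in sequence, using monotone-operator theory for the variational inequality, perturbation in the admissible set, and a comparison argument with auxiliary $\mathcal{A}_{\vp(\cdot)}$-harmonic maps.

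For existence and uniqueness, the set $\mathcal{K}_{\psi,g}(\Omega)$ is a nonempty closed convex subset of the reflexive Banach space $W^{1,\vp(\cdot)}(\Omega)$, where reflexivity is granted by (A0), (aInc)$_p$ and (aDec)$_q$ with $1<p\leq q<\infty$. The induced operator $\mathcal{A}_{\vp(\cdot)}$ of \eqref{calAH} is bounded into the dual of $W^{1,\vp(\cdot)}_0(\Omega)$ by the upper bound in \eqref{A}, strictly monotone by \eqref{A-monotone}, hemicontinuous by \eqref{A-Car}, and coercive in view of the lower bound in \eqref{A} combined with the Poincaré inequality in $W^{1,\vp(\cdot)}_0(\Omega)$. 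The Browder--Minty / Lions--Stampacchia theorem for variational inequalities then delivers a unique solution $v\in\mathcal{K}_{\psi,g}(\Omega)$ of \eqref{obs}.

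For the $\mathcal{A}_{\vp(\cdot)}$-harmonicity statement, fix any open $U\subset\{v>\psi\}$ and $\eta\in C^{\infty}_{c}(U)$; for sufficiently small $\varepsilon>0$ both $v\pm\varepsilon\eta$ belong to $\mathcal{K}_{\psi,g}(\Omega)$, so testing \eqref{obs} with them yields
\begin{equation*}
\int_{\Omega} A(x,Dv)\cdot D\eta\, \dx = 0,
\end{equation*}
hence $v$ solves \eqref{A0} in $\{v>\psi\}$. For the global continuity statement (and the continuity across the free boundary) I would use that $v$ is simultaneously a supersolution of \eqref{A0} (as noted in the text preceding the theorem) and, by the variational inequality tested against $\min\{v,\psi+k\}$, a local subsolution for a suitable comparison function built from $\psi$; the De Giorgi--Nash--Moser iteration available in the Musielak--Orlicz framework under the \data-assumptions then transfers the continuity of $\psi$ to $v$ on the contact set, while $\mathcal{A}_{\vp(\cdot)}$-harmonicity handles the non-contact set.

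For the Hölder assertion — which I expect to be the principal difficulty — I would argue by a comparison-and-iteration scheme on balls $B_R\Subset\widetilde{\Omega}\Subset\Omega$. Let $h\in v+W^{1,\vp(\cdot)}_{0}(B_R)$ be the $\mathcal{A}_{\vp(\cdot)}$-harmonic replacement of $v$ in $B_R$. Standard interior Hölder regularity for $\mathcal{A}_{\vp(\cdot)}$-harmonic functions under (A0), (A1), (aInc)$_p$, (aDec)$_q$ gives $\osc_{B_{\sigma R}} h \le c\,\sigma^{\alpha}\osc_{B_R} h$ for some $\alpha>0$ and all $\sigma\in(0,1/2)$, with constants controlled by $\nr{\vp(\cdot,|Dh|)}_{L^{1}(B_R)}\le \nr{\vp(\cdot,|Dv|)}_{L^{1}(B_R)}$ through the minimizing property of $h$. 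Testing \eqref{obs} with $\min\{v,h+k\}$ where $k=\osc_{B_R}\psi\le c[\psi]_{0,\theta}R^{\theta}$ gives $v-k\le h\le v$ in $B_R$, so
\begin{equation*}
\osc_{B_{\sigma R}} v \le \osc_{B_{\sigma R}} h + 2k \le c\,\sigma^{\alpha}\osc_{B_R} v + c[\psi]_{0,\theta}R^{\theta}.
\end{equation*}
Choosing $\sigma$ small enough and iterating by the standard lemma on decay of oscillation yields $\osc_{B_{R}} v\le c R^{\theta}$ with the announced dependence in \eqref{v-Holder}. The delicate point will be to run the comparison $v\ge h$ and the oscillation decay on $h$ entirely in modular form, since the inhomogeneity of $\vp$ makes the norm--modular passage through (aInc)$_p$ and (aDec)$_q$ scale-dependent; one must carefully absorb the scaling constants arising from (A1) so that the recursion on $B_{\sigma R}$ closes uniformly in $R$.
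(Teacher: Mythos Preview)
Your treatment of existence, uniqueness, and $\mathcal{A}_{\vp(\cdot)}$-harmonicity off the contact set matches the paper's: Proposition~\ref{prop:ex-uni} also runs monotone-operator theory on the reflexive space $W^{1,\vp(\cdot)}(\Omega)$ (via the Kikuchi--Oden result, Proposition~\ref{exist}), and the harmonicity on $\{v>\psi\}$ is obtained by the same perturbation $v\pm\varepsilon\eta$ (the paper defers this and the continuity statement to \cite[Theorem~5.8]{ka}).

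For the H\"older assertion, however, the paper takes a different route and your scheme has a genuine gap in the exponent. The paper does \emph{not} compare $v$ to an $\mathcal{A}_{\vp(\cdot)}$-harmonic replacement; it works directly on $v$ through a Caccioppoli inequality at levels $k\ge\sup_{B_R}\psi$, the local sup-bound of Proposition~\ref{prop:v-bounded}, and the weak Harnack inequality for supersolutions of Proposition~\ref{prop:inf-estimate}, all taken from \cite{kale}. Their combination---carried out in detail in the proof of Lemma~\ref{osc}---produces $\osc_{B_{4\rr}}v\le c\,\osc_{B_{16\rr}}\psi+c\,\rr\le c\,\rr^{\theta}$ in a single step, with no geometric iteration that could cap the exponent. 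Your recursion $\osc_{B_{\sigma R}}v\le c\,\sigma^{\alpha}\osc_{B_R}v+cR^{\theta}$ instead carries the De~Giorgi exponent $\alpha$ of $\mathcal{A}_{\vp(\cdot)}$-harmonic maps, and under the bare hypotheses \eqref{A-Car}--\eqref{A-monotone} (merely measurable $x$-dependence in $A$, no further structure) this $\alpha$ is a fixed small number determined by $\data$; there is no Lipschitz or $C^{1,\beta}$ theory for $h$ available in this generality to push $\alpha$ up to $1$. The standard decay lemma then returns only the exponent $\min(\alpha,\theta)$, so whenever $\theta>\alpha$ your argument stops short of $v\in C^{0,\theta}_{\mathrm{loc}}$. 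A secondary imprecision: the ``minimizing property of $h$'' you invoke for the energy comparison is not available, since $A$ is only monotone, not assumed to be the gradient of an integrand; the comparison $h\le v$ and the energy control must go through \eqref{A-monotone} and the supersolution property of $v$ (cf.\ Lemma~\ref{lem:comp-princ}).
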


\subsection*{Organization}  Section~\ref{sec:prelim} introduces main assumptions and the functional setting. In Section~\ref{sec:haus} we present the concept of intrinsic capacities and intrinsic Hausdorff--type measures. Section~\ref{sec:obstacle}  is devoted to the study on the obstacle problem, while Section~\ref{sec:rem} to the proof of the main result on the removability.

\vspace{2mm}
\section{Preliminaries}\label{sec:prelim}

\subsection{Notation}\label{sec:not}  We collect here basic remarks on the notation we use throughout the paper. Following a usual custom, we denote by $c$ a general constant larger than one. Different occurrences from line to line will be still denoted by $c$ {or similarly in special occurrences.} Relevant dependencies on parameters will be emphasized with parentheses, e.g. $c=c(n,p,q)$ means that $c$ depends on $n,p,q$. If $s>1$, by $s'$ we mean its H\"older conjugate, i.e. $s'=s/(s-1)$.
%, whereas by $t^{*}$ its Sobolev conjugate, i.e. $t^{*}=tn/(n-t)$. 
We denote by $B_{\rr}(x_{0}):=\left\{x\in \mathbb{R}^{n}\colon \snr{x-x_{0}}<\rr\right\}$ the open ball with center $x_{0}$ and radius $\rr>0$. When it is not important, or clear from the context, we shall omit denoting the center as follows: $B_{\rr}(x_{0})\equiv B_{\rr}$. Very often, when it is not otherwise stated, different balls will share the same center. Also, if $B$ is a ball with radius $\rr$, then $tB$ is a concentric ball with radius $t\rr$. When the ball $B$ is given we occasionally denote its radius as $\rr(B)$. With $U\subset \mathbb{R}^{n}$ being a~measurable set with finite and positive $n$-dimensional Lebesgue measure $\snr{U}>0$, and with $f\colon U\to \mathbb{R}^{k}$, $k\ge 1$ being a measurable map, by 
\begin{flalign*}
(f)_{U}:=\mint_{U}f(x) \ \dx =\frac{1}{\snr{U}}\int_{U}f(x) \ \dx
\end{flalign*}
we mean the integral average of $f$ over $U$. With $h\colon \Omega\to \mathbb{R}$, $U\subset \Omega$, and $\gamma \in (0,1]$ being a~given number, we shall denote
\begin{flalign*}
[h]_{0,\gamma;U}:=\sup_{\substack{x,y\in U,\\x\not =y}}\frac{\snr{h(x)-h(y)}}{\snr{x-y}^{\gamma}}, \qquad [h]_{0,\gamma}\equiv [h]_{0,\gamma;\Omega}.
\end{flalign*} 
{Recall that in~\eqref{con} we defined $\mathcal{K}_{\psi,g}(\Omega)$ with an obstacle $\psi$ and the boundary datum $g \in W^{1,\vp(\cdot)}(\Omega)$.  By $\mathcal{K}_{\psi}(\Omega)$ we denote $\mathcal{K}_{\psi,g}(\Omega)$ with $\psi\equiv g$.} 
%The notation $f\lesssim g$ means that there exists a constant $C>0$ such that $f\le C g$. The notation $f\approx g$ means that $f\lesssim g\lesssim f$. 
A function $f$ is \textit{almost increasing} if there exists a constant $L \ge 1$ such that $f(s) \le L f(t)$ for all $s \le t$ (more precisely, $L$-almost increasing). \textit{Almost decreasing} is defined analogously. 
%Two functions $\vp$ and $\psi$ are \textit{equivalent}, $\vp\simeq\psi$, if there exists $L\ge 1$ such that $\psi(x,\frac tL)\le \vp(x, t)\le \psi(x, Lt)$ for every $x \in \Omega$ and every $t>0$.

\subsection{Generalized Orlicz functions}\label{ssec:gen-or}
In order to capture within the same framework power, variable exponent, Orlicz, double phase growth and also more, we shall need to introduce the following bunch of definitions and remarks.
\begin{definition}
We say that $\vp: \Omega\times [0, \infty) \to [0, \infty]$ is a \textit{convex $\Phi$-function} ($\vp \in \Phi_c(\Omega)$), if
\begin{itemize}
\item for every $s \in [0, \infty)$ the function $x \mapsto \vp(x, s)$ is measurable;

\item  for almost every $x \in \Omega$ the function $s \mapsto \vp(x, s)$ is increasing, convex and left-continuous for $s>0$;

\item $\displaystyle \vp(x, 0) = \lim_{s \to 0^+} \vp(x,s) =0$ and $\displaystyle \lim_{s \to \infty}\vp(x,s)=\infty$ for almost every $x\in \Omega$;

\item The function $s \mapsto \frac{\vp(x, s)}s$ is $L$-almost increasing for $s>0$ for some $L\geq 1$ and almost every $x\in \Omega$.

\end{itemize}

\end{definition}

\noindent By $\vp^{-1}$ we denote the inverse of a convex $\Phi$-function $\vp$, that is
\[
\vp^{-1}(x,\tau) := \inf\{s \ge 0 \,:\, \vp(x,s)\ge \tau\}.
\] 
Let us write \[\vp^+_B (s) := \sup_{x \in B\cap \Omega} \vp(x, s)\qquad\text{ and
}\qquad \vp^-_B (s) := \inf_{x \in B \cap \Omega} \vp(x, s).\] 
Assume that the following two conditions hold.
\begin{itemize}
\item[(A0)] There exists $\beta \in(0,1)$ such that $\vp^+(\beta) \le 1 \le \vp^-(1/\beta)$.
\item[(A1)] There exists $\beta\in (0,1)$ such that
\[
\vp^+_B (\beta s) \le \vp^-_B (s)
\]
for every $s \in \big[1, (\vp_B^-)^{-1}(\tfrac1{|B|})\big]$ and every ball B with $\left (\vp^-_B\right)^{-1} \big (\frac{1}{|B|} \big ) \geq 1$.
\end{itemize}
Condition (A0) yields non-degeneracy, while (A1) restricts jumps $\vp$ can do.

We also introduce the following assumptions. %They are related to the $\Delta_2$ and $\nabla_2$ conditions from Orlicz space theory.  
\begin{itemize}
\item[$\ainc$] There exists $L_p\ge 1$ such that $s \mapsto \frac{\vp(x,s)}{s^{p}} $ is $L_p$-almost increasing in $(0,\infty)$.

\item[$\adec$] There exists $L_q\ge 1$ such that $s \mapsto \frac{\vp(x,s)}{s^{q}} $ is $L_q$-almost decreasing in $(0,\infty)$.
\end{itemize}
The function satisfying $\ainc$ and $\adec$ is called doubling. Let us motivate it. By the Fenchel--Young conjugate of $\vp$, we mean the function $\vp^{*}(x,t):=\sup_{s\ge 0}\left\{st-\vp(x,s)\right\}$.  We say that $\vp$ satisfies $\Delta_2$ condition (denoted by $\vp\in\Delta_2$), if there exists a constant $C\ge 1$ such that $\vp(x, 2s) \le C \vp(x, s)$ for every $x \in \Omega$ and every $s \geq 0$. If $\vp\in \Phi_c(\Omega)$, then (aDec)$_q$ is equivalent to $\vp\in\Delta_2$ \cite[Lemma 2.2.6]{hahab} and  (aInc)$_p$ is equivalent to $\vp^*\in\Delta_2$  \cite[Corollary~2.4.11]{hahab}. 

We note that if $\vp$ satisfies (A0), (A1) and (aDec)$_q$ with $q \leq n$, then $\vp$ satisfies so-called (A1-$n$) condition, that is
\begin{align*}
%\label{A1-n}
\vp^{+}_{B}(\beta s) \leq \vp^{-}_{B}(s) \quad \text{ for every } \quad s \in \left [1, \frac{1}{r(B)}\right ]
\end{align*} 
for every ball with $r(B) < 1$, see \cite[Lemma 2.9]{hahato}.  We can use doubling to transfer the small $\beta$ from the left-hand side to a (possibly) large constant $C=C(\beta, q, L_q)$ on the right-hand side, that is
\begin{align}
\label{A1-n}
\vp^{+}_{B}(s) \leq C \vp^{-}_{B}(s) \quad \text{ for every } \quad s \in \left [1, \frac{1}{r(B)}\right ].
\end{align}

 Direct consequences of the definition of $\vp^{*}$ are the following equivalence
\begin{flalign}\label{ex3}
\vp^{*}\left(x, {\vp(x,s)}/{s}\right)\leq \vp(x,s) \quad\text{for all }\ (x,s)\in \Omega \times \mathbb{R}^{n},
\end{flalign}
which for $\vp$ satisfying $\ainc$ and $\adec$ holds up to constants depending only on $p$ and $q$. We also note that $\vp$ satisfies (aInc)$_p$ or (aDec)$_q$ if and only if $\vp^{\ast}$ satisfies (aDec)$_{p'}$ or (aInc)$_{q'}$, respectively \cite[Proposition 2.4.9]{hahab}.

\subsection{Fuctional setting}\label{ssec:setting} There are various approaches how to describe generalized Orlicz spaces (called also Musielak-Orlicz spaces), cf.~\cite{hahab,IC-pocket,IC-b}. We aim at presenting below the main functional analytic tools in the least complicated way when the modular function $\vp\in \Phi_c(\Omega)$ satisfies assumptions (A0), (A1), (aInc)$_p$ and (aDec)$_q$ with some $1<p\leq q<\infty$. Let $L^0(\Omega)$ denote the set of measurable functions in $\Omega$. We define Musielak-Orlicz space 
\begin{flalign*}
L^{\vp(\cdot)}(\Omega):=\left\{w\in L^{0}(\Omega)\colon \int_{\Omega}\vp(x,|w|) \ \dx <\infty \right\},
\end{flalign*}
equipped with the Luxemburg norm
\begin{flalign*}
\nr{w}_{L^{\vp(\cdot)}(\Omega)}:=& 
\inf\left\{\lambda> 0\colon \int_{\Omega}\vp\left(x,\tfrac{1}{\lambda}|w|\right) \ \dx\le 1\right\}.
\end{flalign*}
 If $v\in L^{\vp(\cdot)}(\Omega)$ and $w\in L^{\vp^{*}(\cdot)}(\Omega)$, we have the H\"older inequality
\begin{flalign*}
\left | \ \int_{\Omega}vw \ \dx \  \right |\le&\ 2\,\nr{v}_{L^{\vp(\cdot)}(\Omega)}\nr{w}_{L^{\vp^{*}(\cdot)}(\Omega)}.
\end{flalign*}
We denote the modular \begin{flalign*}
\rho_{\vp(\cdot);\Omega}(w):= \int_{\Omega}\vp\left(x,w\right)\dx.
\end{flalign*}
If the doubling properties of $\vp$ are expressed by $\ainc$, $\adec$ and $a := \max\{L_p, L_q\}$, then
\begin{flalign}\nonumber
\min\left\{\left(\frac{1}{a}\rho_{\vp(\cdot);\Omega}(w)\right)^{\frac{1}{p}},\left(\frac{1}{a}\rho_{\vp(\cdot);\Omega}(w)\right)^{\frac{1}{q}}\right\}&\le \nr{w}_{L^{\vp(\cdot)}(\Omega)}\\
&\le\max \left\{\left(a\rho_{\vp(\cdot);\Omega}(w)\right)^{\frac{1}{p}},\left(a\rho_{\vp(\cdot);\Omega}(w)\right)^{\frac{1}{q}}\right\}\label{nr1}
\end{flalign}
and, {therefore} %equivalently,
\begin{flalign}\nonumber
{b_{1}}\min\left\{\left(\nr{w}_{L^{\vp(\cdot)}(\Omega)}\right)^{p},\left( \nr{w}_{L^{\vp(\cdot)}(\Omega)}\right)^{q}\right\}&\le \rho_{\vp(\cdot);\Omega}(w)\\
&%\qquad\qquad\qquad\qquad\qquad\qquad\qquad 
\le {b_{2}} \max \left\{\left(\nr{w}_{L^{\vp(\cdot)}(\Omega)}\right)^{p},\left(\nr{w}_{L^{\vp(\cdot)}(\Omega)}\right)^{q}\right\}\label{nr3}
\end{flalign}
{for some constants $b_1$ and $b_2$ {depending also only on the parameters $p,q,L_p,L_q$ describing the growth of $\vp(\cdot)$}.} 
Since the nonlinear tensor $A$ satisfies \eqref{A}, problem \eqref{A0} is naturally posed in the Musielak--Orlicz--Sobolev space
\begin{flalign*}
W^{1,\vp(\cdot)}(\Omega):=\left\{w\in W^{1,1}(\Omega)\colon 
\ w,\,|Dw|\in {L^{\vp(\cdot)}(\Omega)}\right\},
\end{flalign*}
equipped with the norm $ \nr{w}_{W^{1,\vp(\cdot)}(\Omega)}:=  \nr{w}_{L^{\vp(\cdot)}(\Omega)}+\nr{Dw}_{L^{\vp(\cdot)}(\Omega )}$. Upon such a definition $W^{1,\vp(\cdot)}(\Omega)$ is a Banach space, which, due to the doubling properties of $\vp(\cdot)$, is separable and reflexive. The dual space can be characterized by the means of the Fenchel-Young conjugate of $\vp(\cdot)$, namely we have $(W^{1,\vp(\cdot)}(\Omega))^{*}\sim W^{1,\vp^{*}(\cdot)}(\Omega)$.  Space $W^{1,\vp(\cdot)}_{\mathrm{loc}}(\Omega)$ is defined in the standard way.  We shall also define zero--trace space $W^{1,\vp(\cdot)}_{0}(\Omega)$ as a closure of $C^{\infty}_{c}(\Omega)$ functions in $W^{1,\vp(\cdot)}(\Omega)$. Justification of this choice of definition requires some comments, since it is known that in inhomogeneous spaces smooth functions may be not dense~\cite{eslemi,fomami,zh,badi}. 

\begin{remark}[Density]\label{rem:density}\rm In general, to get density of regular functions (smooth/Lipschitz) in norm in Musielak-Orlicz-Sobolev spaces, besides the (doubling) type of growth of $\vp(\cdot)$, what has to be controlled is its modulus of continuity (speed of growth has to be balanced with the regularity in the spacial variable), see~\cite{hahab} and~\cite{yags}. The critical role to get it here is played by assumption (A1) and, in turn, the definition of $W^{1,\vp(\cdot)}_{0}(\Omega)$ makes sense. In fact, the natural topology for Musielak--Orlicz--Sobolev spaces is the modular one, i.e. the one coming from  the notion of modular convergence~\cite{yags,IC-b,hahab}. We say that a sequence $(w_j)_{j \in \N}\subset L^{\vp(\cdot)}(\Omega)$ converges to $w$ modularly in $L^{\vp(\cdot)}(\Omega)$ if
\begin{flalign*}
\lim_{j\to \infty}w_{j}(x)=w(x) \ \ \mbox{for a.e.} \ x\in \Omega \quad \mbox{and}\quad \lim_{j\to \infty}\int_{\Omega}\vp(x,|w_{j}-w|) \, \dx=0.
\end{flalign*}
Consequently, $w_j\to w$ modularly in $W^{1,\vp(\cdot)}(\Omega)$ if both $w_j\to w$ and $Dw_j\to Dw$ modularly in $L^{\vp(\cdot)}(\Omega)$. Since the growth of $\vp$ is comparable to doubling, the modular convergence is equivalent to the norm convergence~\cite{IC-b,hahab}.  \end{remark}

\section{Intrinsic capacities and intrinsic Hausdorff measures}\label{sec:haus} 

\subsection{Definitions}\label{ssec:haus-def}
We define the \emph{intrinsic $\vp(\cdot)$-capacity} and recall its main features exactly in the form we need. Our main reference for this section is~\cite{bahaha}. Throughout this section we always assume that $\vp\in\Phi_c(\Omega)$ satisfies (A0), (A1), (aInc)$_p$ and (aDec)$_q$ with $1<p\leq q\leq n$.

 Given a~compact set $K\subset \Omega$, we denote its relative $\vp(\cdot)$-capacity as
\begin{flalign*}
cap_{\vp(\cdot)}(K,\Omega):=\inf_{f\in \mathcal{R}(K)}\int_{\Omega}\vp(x,|Df|) \ \dx,
\end{flalign*}
where the set of test functions is
\begin{flalign*}
\mathcal{R}_{\vp(\cdot)}(K):=\left\{ f\in W^{1,\vp(\cdot)}(\Omega)\cap C_{0}(\Omega)\colon \ \  f\ge 1 \ \ \mbox{in} \ \ K \right\}.
\end{flalign*}
As usual, for open subsets $U\subset \Omega$ and general $E\subset \Omega$ we have
\begin{flalign*}
cap_{\vp(\cdot)}(U,\Omega):=\sup_{{\substack{K\subset U,\\ K \ \mbox{compact}}}}cap_{\vp(\cdot)}(K,\Omega)
\end{flalign*}
and then
\begin{flalign*}
cap_{\vp(\cdot)}(E,\Omega):=\inf_{{\substack{E\subset U\subset \Omega, \\ U \ \mbox{open}}}}cap_{\vp(\cdot)}(U,\Omega).
\end{flalign*}
The structure of $\vp(\cdot)$ guarantees that $cap_{\vp(\cdot)}$ enjoys the standard properties of~Sobolev capacities. In particular,  as shown in \cite{bahaha} due to the convexity of $s\mapsto \vp(\cdot,s)$,  $cap_{\vp(\cdot)}$ is Choquet, which means that
\begin{flalign}\label{cho}
cap_{\vp(\cdot)}(E,\Omega)=\sup\left\{cap_{\vp(\cdot)}(K,\Omega)\colon K\subset E \ \mbox{is a compact set}\right\}.
\end{flalign}
Moreover, as $\vp$ satisfies (A0) and (A1), we see that the relative capacity $cap_{\vp(\cdot)}$ is equivalent to the capacity $C_{\vp(\cdot)}$ defined in \cite[Section~3]{bahaha}, see \cite[Theorem~7.3 and Proposition~7.5]{bahaha}.
\begin{remark}\label{r3}\rm Whenever we consider function $f\in \mathcal{R}_{\vp(\cdot)}(K)$ on  a compact set $K\Subset \Omega$ there is no loss of generality in assuming $0\le f\le 1$ on $\Omega$. {Since $f\in C_{0}(\Omega)$, $f\ge 1$ on $K$ and the map $t\mapsto \min\{t,1\}$ is Lipschitz,} it follows that $\tilde{f}:=\min\{f,1\} \in \mathcal{R}_{\vp(\cdot)}(K)$. Moreover,
\begin{flalign*}
\int_{\Omega}\vp(x,|D\tilde{f}|) \ \dx =&\int_{\{x\in \Omega\colon f(x)< 1\}}\vp(x,|Df|) \ \dx \le \int_{\Omega}\vp (x,|Df|) \ \dx,
\end{flalign*}
On the other hand, according to Remark~\ref{rem:density} yielding the density of smooth and compactly supported functions in $W^{1,\vp(\cdot)}_{0}(\Omega)$, there is also no loss of generality in restricting ourselves to $f\in C^{\infty}_{c}(\Omega)$.
\end{remark}

Naturally associated to these capacities is the concept of \emph{intrinsic Hausdorff measures}, introduced in \cite{demi}, see also \cite{ni,tu}. For any $n$-dimensional open ball $B\subset \Omega$  of radius $\rr(B)\in (0,\infty)$, we define
\begin{flalign*}
h_{\vp(\cdot)}(B):=\int_{B}\vp\left(x,\tfrac{1}{\rr(B)}\right) \ \dx.
\end{flalign*}Note that there is no difference in the following in taking closed balls here. Moreover, since $\vp$ satisfies (aInc)$_p$ and (aDec)$_q$ with $1<p\leq q\leq n$ we may apply the standard Carath\'eodory's construction to obtain an outer measure for any $E\subset \Omega$. We define the $\delta$-approximating Hausdorff measure of $E$, $\mathcal{H}_{\vp(\cdot),\delta}(E)$ with $\delta \leq 1$, by
\begin{flalign*}
\mathcal{H}_{\vp(\cdot),\delta}(E)=\inf_{\mathcal{C}_{E}^{\delta}}\sum_{j}h_{\vp(\cdot)}(B_{j}),
\end{flalign*}
where
\begin{flalign}
\label{C_E}
\mathcal{C}_{E}^{\delta}=\left \{\ \{B_{j}\}_{j\in \N}\ \mathrm{is \ a \ countable \ collection \ of \ balls \ B_j \subset \Omega \ covering \ }E,\, \ \rr(B_{j})\le \delta \  \right\}.
\end{flalign}
As $0<\delta_{1}<\delta_{2}<\infty$ implies $\mathcal{C}_{E}^{\delta_{1}}\subset \mathcal{C}_{E}^{\delta_{2}}$, we have that $\mathcal{H}_{\vp(\cdot), \delta_1}(E)\ge \mathcal{H}_{\vp(\cdot),\delta_{2}}(E)$ and there exists the limit
\begin{flalign*}
\mathcal{H}_{\vp(\cdot)}(E):=\lim_{\delta \to 0}\mathcal{H}_{\vp(\cdot),\delta}(E)=\sup_{\delta>0}\mathcal{H}_{\vp(\cdot),\delta}(E)\;.
\end{flalign*}
By standard arguments, found for example in \cite[2.10.1, p. 169]{fe}, $\mathcal{H}_{\vp(\cdot)}$ is a Borel regular measure. 
\begin{proposition}\cite[Theorem 2]{demi}\label{eq}
For $\vp\in\Phi_c(\Omega)$ under assumptions (A0), (A1), (aInc)$_p$ and (aDec)$_q$ for $1<p\leq q \leq n$, if $E\subset \mathbb{R}^{n}$ is such that $\mathcal{H}_{\vp(\cdot)}(E)<\infty$, then $cap_{\vp(\cdot)}(E)=0$.
\end{proposition}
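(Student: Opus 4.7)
By the Choquet property \eqref{cho}, I may reduce to a compact set $E\subset\Omega$. For every $\delta\in(0,1)$, finiteness of $\mathcal{H}_{\vp(\cdot)}(E)$ together with compactness of $E$ yields a finite admissible covering $\{B_j\}_{j=1}^N\in\mathcal{C}_E^\delta$ with $\sum_j h_{\vp(\cdot)}(B_j)\le \mathcal{H}_{\vp(\cdot)}(E)+\delta$. The natural competitor is $f_\delta := \max_{1\le j\le N}\eta_j$, where each $\eta_j$ is a cut-off adapted to the ball $B_j=B_{r_j}(x_j)$.

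For the routine choice $\eta_j\in C_c^\infty(2B_j)$ with $\eta_j\equiv 1$ on $B_j$ and $|D\eta_j|\le c/r_j$, an approximation by smooth functions (justified by Remark~\ref{rem:density}) gives $f_\delta\in\mathcal{R}_{\vp(\cdot)}(E)$; using $\adec$ to absorb the gradient constant and \eqref{A1-n} to transfer the modular from $2B_j$ back to $B_j$ yields the standard estimate
\[
\int_\Omega\vp\bigl(x,|Df_\delta|\bigr)\,\dx\;\le\; c\sum_{j=1}^N\int_{2B_j}\vp\bigl(x,1/r_j\bigr)\,\dx\;\le\; c'\sum_{j=1}^N h_{\vp(\cdot)}(B_j)\;\le\; c'\bigl(\mathcal{H}_{\vp(\cdot)}(E)+\delta\bigr),
\]
which already shows $cap_{\vp(\cdot)}(E,\Omega)\le c'\mathcal{H}_{\vp(\cdot)}(E)<\infty$. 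This, however, is only a \emph{bounded} estimate. To upgrade it to $cap_{\vp(\cdot)}(E,\Omega)=0$, I would replace the linear cut-off by a \emph{critical/logarithmic} profile on a concentric annulus, $\eta_j(x)=\phi\bigl(\log(R/|x-x_j|)/\log(R/r_j)\bigr)$, with $R\sim\diam\Omega$ fixed and $\phi$ a smooth $[0,1]$-valued transition. Polar integration then produces a small factor of the form $1/\log(R/r_j)^{\alpha}$ (with $\alpha>0$ emerging from $\ainc$--$\adec$ and the borderline assumption $q\le n$); after summation over $j$, a Vitali-type pruning of the cover, and the limit $\delta\to 0$, this factor yields $cap_{\vp(\cdot)}(E,\Omega)=0$.

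\textbf{Main obstacle.} The core difficulty is the critical-case logarithmic refinement: at the critical gauge $h_{\vp(\cdot)}$, the naive Lipschitz cut-off only delivers finiteness, so one must implement the Musielak--Orlicz analogue of the Meyers ``log-cut-off'' trick for $p$-capacity at the critical Hausdorff dimension. In the present framework this demands a careful handling of the spatial modulus of continuity of $\vp(x,\cdot)$ on the possibly large annuli $B_R(x_j)\setminus B_{r_j}(x_j)$, balancing (A0)--(A1)--\eqref{A1-n} against the growth indices $\ainc$--$\adec$, so that the logarithmic gain survives the radial integration against $\vp(x,\cdot)$ uniformly in the covering. Once this per-ball gain is in place, passage from the finite modular to a vanishing one is a standard refinement of the cover as $\delta\to 0$.
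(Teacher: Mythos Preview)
The paper does not prove this proposition; it is quoted from \cite{demi}, so there is no in-paper argument to compare against. I evaluate your plan on its own.

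Your diagnosis that the Lipschitz cut-offs $f_\delta=\max_j\eta_j$ give only $cap_{\vp(\cdot)}(E,\Omega)\le c\,\mathcal{H}_{\vp(\cdot)}(E)<\infty$ is correct, but the logarithmic refinement you propose does not close the gap. With $R$ fixed and the log profile supported on $B_R(x_j)\setminus B_{r_j}(x_j)$, $\ainc$ indeed extracts a factor $(\log(R/r_j))^{-p}$, but the remaining integral $\int_{B_R\setminus B_{r_j}}\vp(x,c/|x-x_j|)\,\dx$ is governed by the \emph{outer} scale: in the model $\vp(x,s)=s^p$ with $p<n$ it equals $c\,R^{n-p}$, independent of $r_j$. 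Thus the total energy is at most $c\,R^{n-p}\sum_j(\log(R/r_j))^{-p}$, and the only information you have is $\sum_j h_{\vp(\cdot)}(B_j)\le M$, equivalently $\sum_j r_j^{\,n-p}\le cM$. Since $r^{n-p}$ vanishes polynomially while $(\log(R/r))^{-p}$ vanishes only logarithmically as $r\to0$, the latter sum is \emph{not} controlled by the former; neither Vitali nor Besicovitch helps, as they bound overlap multiplicity, not the number of balls. (Your idea does succeed when the gauge forces the cover to be essentially finite, i.e.\ at the borderline $p=n$, but the proposition is stated for the full range $1<p\le q\le n$.)

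The missing idea is a weak-to-strong passage via Mazur's lemma, which is the classical device here. Keep the \emph{naive} competitors $f_\delta$: they are bounded in the reflexive space $W^{1,\vp(\cdot)}(\Omega)$, while $f_\delta\to0$ in $L^{\vp(\cdot)}(\Omega)$ because $\snr{\supp f_\delta}\lesssim\sum_j r_j^{\,n}\le\delta^{\,p}\sum_j r_j^{\,n-p}\le c\,\delta^{\,p}M\to0$ (use (A0) and $\ainc$ to get $h_{\vp(\cdot)}(B_j)\gtrsim r_j^{\,n-p}$). Hence $f_\delta\rightharpoonup0$ in $W^{1,\vp(\cdot)}(\Omega)$, and Mazur's lemma produces finite convex combinations $g_k\to0$ strongly. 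Each $g_k$ is Lipschitz with compact support and still satisfies $g_k\ge1$ on $E$, so $g_k\in\mathcal{R}_{\vp(\cdot)}(E)$; by the norm--modular equivalence in doubling spaces (Remark~\ref{rem:density}), $\rho_{\vp(\cdot);\Omega}(|Dg_k|)\to0$, which gives $cap_{\vp(\cdot)}(E,\Omega)=0$.
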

\subsection{Properties of $\mathcal{H}_{\cJp}$}
To formulate our results of removable sets, we introduce an intrinsic Hausdorff measure $\mathcal{H}_{\cJp}$. It involves a transform of the $\vp(\cdot)$ given by the means of \eqref{Jphi}. This Hausdorff measure of a set $E$ is defined in the standard way
\begin{align*}
\mathcal{H}_{\cJp}(E) = \lim_{\delta \to 0} \inf_{C_E^{\delta}} \sum_{j} \rr_j^{-\theta} \int_{B_{\rr_j}} \vp(x, \rr_j^{\theta-1})\ \dx,
\end{align*}
where $C_E^{\delta}$ is defined in \eqref{C_E}. Since $\vp$ is doubling, we see that $\cJp$ is finite for any Euclidean ball and therefore \cite[2.10.1, p. 169]{fe} guarantees that $\mathcal{H}_{\cJp}$ generates a Borel regular measure.% enjoys the same structure as $\vp(\cdot)$, the resulted intrinsic Hausdorff measure $
%\mathcal{H}_{\cJp}$ has the same basic properties as 
%$\mathcal{H}_{\vp(\cdot)}$.

\subsection{Removability of sets of finite $\mathcal{H}_{\vp(\cdot)}$--measure}
The aim of this subsection is to show that the sets of finite $\mathcal{H}_{\vp(\cdot)}$--measure are removable for $\mathcal{A}_{\vp(\cdot)}$-harmonic functions. Following \cite[Chapter 2]{hekima} and Remark~\ref{rem:density}, for $E\subset \Omega$ relatively closed, we say that
\begin{flalign*}
W^{1,\vp(\cdot)}_{0}(\Omega)=W^{1,\vp(\cdot)}_{0}(\Omega\setminus E)
\end{flalign*}
if for any given $w\in W^{1,\vp(\cdot)}_{0}(\Omega)$ there exists a sequence $(w_{j})_{j\in \N}\subset C^{\infty}_{c}(\Omega\setminus E)$ such that  $w_j\to w$ modularly in $W^{1,\vp(\cdot)}_{0}(\Omega)$. Now we are ready to state our first two results, which clarify when a set is negligible in $W^{1,\vp(\cdot)}(\Omega)$. The already classical version of this fact stated in the Sobolev space $W^{1,p}$ can be found in \cite[Section 2.42]{hekima}. We recall here that since the growth of $\vp$ is doubling, Remark~\ref{rem:density} explains that we can work with the modular convergence.
\begin{lemma}\label{l2}
Suppose that $E$ is a relatively closed subset of $\Omega$. Then 
\begin{flalign*}
W^{1,\vp(\cdot)}_{0}(\Omega)=W^{1,\vp(\cdot)}_0(\Omega\setminus E)\ \ \mbox{if and only if} \ \ cap_{\vp(\cdot)}(E,\Omega)=0.
\end{flalign*}
\end{lemma}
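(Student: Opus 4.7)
The plan is a standard cut-off/density argument carried out in the doubling Musielak--Orlicz--Sobolev setting, patterned on \cite[Section 2.42]{hekima}. Throughout, Remark \ref{rem:density} lets me identify modular convergence with norm convergence, so $W^{1,\vp(\cdot)}_0(\Omega)$ is exactly the set of modular limits of $C^\infty_c(\Omega)$ functions. The reverse inclusion $W^{1,\vp(\cdot)}_0(\Omega\setminus E)\subset W^{1,\vp(\cdot)}_0(\Omega)$ is automatic (extend approximants by zero outside $\Omega\setminus E$), so both directions of the equivalence reduce to producing and estimating suitable capacitary cut-offs.

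For the implication $cap_{\vp(\cdot)}(E,\Omega)=0 \Rightarrow W^{1,\vp(\cdot)}_0(\Omega)\subset W^{1,\vp(\cdot)}_0(\Omega\setminus E)$, it is enough to modularly approximate any $w\in C^\infty_c(\Omega)$ by elements of $C^\infty_c(\Omega\setminus E)$. Since $\supp w$ is compact in $\Omega$, the set $E\cap \supp w$ is compact with vanishing capacity, so by the definition of $cap_{\vp(\cdot)}$ through open neighborhoods, Remark~\ref{r3}, and the equivalence with $C_{\vp(\cdot)}$ from \cite[Theorem~7.3, Proposition~7.5]{bahaha}, I would pick $\phi_j\in C^\infty_c(\Omega)$ with $0\le \phi_j\le 1$, $\phi_j\equiv 1$ on an open neighborhood $U_j$ of $E\cap \supp w$, and $\int_{\Omega}\vp(x,|D\phi_j|)\,\dx \to 0$. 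Because \ainc{} holds with $p\le n$, capacity zero forces $|E|=0$, hence $\phi_j\to 0$ a.e.\ in $\Omega$. The candidate approximants are
\[
w_j := w(1-\phi_j),
\]
and the inclusion $\supp w_j\subset \supp w\setminus U_j\Subset \Omega\setminus E$ gives $w_j\in C^\infty_c(\Omega\setminus E)$. The modular convergence $w_j\to w$ follows from the decomposition $D(w-w_j)=\phi_j Dw+w D\phi_j$ together with $\vp(x,a+b)\le c(\vp(x,a)+\vp(x,b))$ (a consequence of convexity and \adec): dominated convergence with majorants $\vp(x,|Dw|)$ and $\vp(x,|w|)$ in $L^1(\Omega)$ handles the summands involving $\phi_j Dw$ and $w\phi_j$, while \adec{} reduces the remaining piece to $c\int_{\Omega}\vp(x,|D\phi_j|)\,\dx\to 0$ thanks to $|w|\le \nr{w}_{L^\infty(\Omega)}$.

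For the converse, assume $W^{1,\vp(\cdot)}_0(\Omega)=W^{1,\vp(\cdot)}_0(\Omega\setminus E)$. By the Choquet property \eqref{cho} it suffices to show $cap_{\vp(\cdot)}(K,\Omega)=0$ for every compact $K\subset E$. Fix such a $K$ and pick $\eta\in C^\infty_c(\Omega)$ with $0\le\eta\le 1$ and $\eta\equiv 1$ on a neighborhood of $K$; then $\eta\in W^{1,\vp(\cdot)}_0(\Omega)=W^{1,\vp(\cdot)}_0(\Omega\setminus E)$, so Remark~\ref{rem:density} provides $\eta_j\in C^\infty_c(\Omega\setminus E)$ with $\eta_j\to \eta$ modularly in $W^{1,\vp(\cdot)}$. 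The differences $\zeta_j:=\eta-\eta_j\in C^\infty_c(\Omega)$ lie in $\mathcal{R}_{\vp(\cdot)}(K)$ because $\eta_j\equiv 0$ on $E\supset K$ and $\eta\equiv 1$ on $K$ force $\zeta_j\equiv 1$ on $K$; hence
\[
cap_{\vp(\cdot)}(K,\Omega)\le \int_{\Omega}\vp(x,|D\zeta_j|)\,\dx=\int_{\Omega}\vp(x,|D\eta-D\eta_j|)\,\dx \longrightarrow 0.
\]

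The main obstacle sits in the first direction, namely the passage from "capacity zero" (test functions with $\phi\ge 1$ on $E$) to "a smooth cut-off identically one on an \emph{open} neighborhood of $E$". This requires the equivalence of $cap_{\vp(\cdot)}$ with the $C_{\vp(\cdot)}$ capacity of \cite{bahaha} combined with a truncation-dilation trick applied to near-extremal capacity functions, plus a diagonal argument along a compact exhaustion of $E\cap \supp w$. Once such cut-offs are in hand, the remainder reduces to routine modular-convergence bookkeeping that crucially uses the doubling of $\vp(\cdot)$ encoded in \rif{nr1}--\rif{nr3}.
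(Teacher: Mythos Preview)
Your argument is correct and follows the same classical cut-off/density strategy as the paper (which in turn mirrors \cite[Section~2.42]{hekima}): multiply smooth test functions by $1-\phi_j$ for the forward direction and use differences $\eta-\eta_j$ as capacity competitors for the converse, invoking the Choquet property~\eqref{cho}. You are in fact more careful than the paper on the technical point you flag---the paper simply asserts that $\supp\big((1-f_j)\phi\big)\subset\Omega\setminus E$ without arranging $f_j\equiv 1$ on an open neighborhood of $E$---so your attention to this via the compact set $E\cap\supp w$ and the $C_{\vp(\cdot)}$-equivalence is well placed.
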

\begin{proof} Assume first that $cap_{\vp(\cdot)}(E,\Omega)=0$. Obviously, $W^{1,\vp(\cdot)}_0(\Omega\setminus E)\subset W^{1,\vp(\cdot)}_0(\Omega)$, so it suffices to show that $W^{1,\vp(\cdot)}_0(\Omega)\subset W^{1,\vp(\cdot)}_0(\Omega\setminus E)$. Since $cap_{\vp(\cdot)}(E,\Omega)=0$, according to Remark \ref{r3}, there exists a sequence $(f_{j})_{j \in \N}\subset \left(\mathcal{R}_{\vp(\cdot)}(E)\cap C^{\infty}_{c}(\Omega)\right)$ such that
\begin{flalign}\label{fj}
0\le f_{j}\le 1 \ \ \mbox{and} \ \ \lim_{j\to \infty}\int_{\Omega}\vp(x,|Df_{j}|) \ \dx = 0.
\end{flalign}
Moreover, having $\phi \in C^{\infty}_{c}(\Omega)$ for any $j\in \N$, the map $\psi_{j}:=(1-f_{j})\phi$ has support contained in $\Omega\setminus E$. Then we have $(\psi_{j})_{j \in \N}\subset C^{\infty}_{c}(\Omega\setminus E)$. The dominated convergence theorem implies that
\begin{flalign*}
\int_{\Omega\setminus E}\vp(x,|D\psi_{j}-D\phi|) \ \dx=0,
\end{flalign*}
therefore $\vp\in W^{1,\vp(\cdot)}_{0}(\Omega\setminus E)$. Since by Remark~\ref{rem:density} and the dominated convergence theorem we can approximate any $w \in W^{1,\vp(\cdot)}_{0}(\Omega \setminus E)$ in the modular {topology} %convergence
via the sequence of truncations $({w}_{k})_{k\in \mathbb{N}}:=(\max \{-k,\min\{w,k\} \})_{k \in \mathbb{N}}$ we have
\begin{flalign*}
W^{1,\vp(\cdot)}_{0}(\Omega)\subset W^{1,\vp(\cdot)}_{0}(\Omega\setminus E),
\end{flalign*}
and the `if' part of the lemma is proven. 

For the `only if' part, by the Choquet property~\eqref{cho}, it is sufficient to show that any compact $K\subset E$ we have $cap_{\vp(\cdot)}(K,\Omega)=0$. Let us fix an arbitrary $f \in \mathcal{R}_{\vp(\cdot)}(K)$. Since $W^{1,\vp(\cdot)}_{0}(\Omega)=W^{1,\vp(\cdot)}_{0}(\Omega\setminus E)$, there exists a sequence $(\phi_{j})_{j \in \N}\subset  C^{\infty}_{c}(\Omega\setminus E)$ such that {$\phi_j\to f$ a.e. in $\Omega$ and} $\lim_{j\to \infty}\int_{\Omega}\vp(x,|D\phi_{j}-Df|) \, \dx=0.$ Therefore, $g_{j}:=f-\phi_{j}\in \mathcal{R}_{\vp(\cdot)}(K,\Omega)$ for all $j\in \N$. As a consequence of the definition of the capacity $cap_{\vp(\cdot)}$, we have
\begin{equation*}
cap_{\vp(\cdot)}(K,\Omega)\le \lim_{j\to \infty}\int_{\Omega}\vp(x,|Dg_{j}|) \ \dx=0. \qedhere
\end{equation*}
\end{proof}
As a direct consequence of Lemma \ref{l2}, we show that sets of finite $\mathcal{H}_{\vp(\cdot)}$--measure are removable for $\mathcal{A}_{\vp(\cdot)}$-harmonic maps.
\begin{corollary}\label{coro:H0}
Let $E\subset \Omega$ be a relatively closed subset of $\Omega$ such that {$\mathcal{H}_{\vp(\cdot)}(E)<\infty$} and $u\in W^{1,\vp(\cdot)}(\Omega)$ satisfying
\begin{flalign}\label{70}
\int_{\Omega\setminus E}A(x,Du)\cdot Dw \ \dx=0 
\end{flalign}
for all $w\in W^{1,\vp(\cdot)}_{0}(\Omega\setminus E)$. Then, $u$ is a solution to \eqref{A0} on the whole $\Omega$.
\end{corollary}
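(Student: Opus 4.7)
The plan is to chain the two preceding results---Proposition \ref{eq} and Lemma \ref{l2}---with a density-plus-H\"older argument in the duality pair $L^{\vp(\cdot)}$--$L^{\vp^*(\cdot)}$.

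First, I would invoke Proposition \ref{eq} on the hypothesis $\mathcal{H}_{\vp(\cdot)}(E) < \infty$ to conclude that $cap_{\vp(\cdot)}(E, \Omega) = 0$. Lemma \ref{l2} then yields the identity $W^{1,\vp(\cdot)}_0(\Omega) = W^{1,\vp(\cdot)}_0(\Omega \setminus E)$, so that any $w \in W^{1,\vp(\cdot)}_0(\Omega)$ can be approximated by a sequence $(w_j)_{j \in \N} \subset C^\infty_c(\Omega \setminus E)$ that converges modularly---and, by the doubling of $\vp$ (Remark \ref{rem:density}), also in the $W^{1,\vp(\cdot)}(\Omega)$-norm.

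Next, I would verify that $A(\cdot, Du)$ sits in the dual space $L^{\vp^*(\cdot)}(\Omega)$. Combining the upper bound in \eqref{A}, the Young-type relation \eqref{ex3}, and the doubling of $\vp^*$ (equivalent to $\ainc$ for $\vp$) produces a modular estimate of the shape
\begin{equation*}
\int_\Omega \vp^*(x, |A(x, Du)|) \, \dx \leq c \int_\Omega \vp(x, |Du|) \, \dx < \infty,
\end{equation*}
which is finite because $u \in W^{1,\vp(\cdot)}(\Omega)$.

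With these tools in place I would fix an arbitrary $w \in W^{1,\vp(\cdot)}_0(\Omega)$ and test \eqref{70} with each approximant $w_j$; since $\supp Dw_j \Subset \Omega \setminus E$, the integral over $\Omega \setminus E$ coincides with the one over $\Omega$, so \eqref{70} gives $\int_\Omega A(x, Du) \cdot Dw_j \, \dx = 0$ for every $j \in \N$. Passing to the limit $j \to \infty$ via the H\"older inequality in $L^{\vp(\cdot)}$--$L^{\vp^*(\cdot)}$ together with the $L^{\vp(\cdot)}$-convergence $Dw_j \to Dw$ delivers $\int_\Omega A(x, Du) \cdot Dw \, \dx = 0$ for every $w \in W^{1,\vp(\cdot)}_0(\Omega)$, which is precisely the claim that $u$ solves \eqref{A0} on all of $\Omega$.

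The only delicate step is the passage to the limit: it hinges on both doubling hypotheses $\ainc$ and $\adec$, which are needed simultaneously to identify modular and norm convergence in $W^{1,\vp(\cdot)}(\Omega)$ and to secure the dual-space integrability of $A(\cdot, Du)$. Once Proposition \ref{eq} and Lemma \ref{l2} have been brought to bear, the remaining argument is a standard dual-pairing computation.
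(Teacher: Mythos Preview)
Your proposal is correct and follows exactly the paper's route: invoke Proposition~\ref{eq} to obtain $cap_{\vp(\cdot)}(E,\Omega)=0$, then Lemma~\ref{l2} to identify $W^{1,\vp(\cdot)}_0(\Omega\setminus E)=W^{1,\vp(\cdot)}_0(\Omega)$, and conclude. The paper's proof is simply terser---it records these two steps and stops---whereas you spell out the density-and-limit passage and the dual integrability of $A(\cdot,Du)$ that are left implicit there.
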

\begin{proof}
Since $\mathcal{H}_{\vp(\cdot)}(E)<\infty$, by Proposition \ref{eq} we have that $cap_{\vp(\cdot)}(E,\Omega)=0$, thus by Lemma~\ref{l2} we can conclude that $W^{1,\vp(\cdot)}_{0}(\Omega\setminus E)=W^{1,\vp(\cdot)}_{0}(\Omega)$, so \eqref{70} actually holds for all $w \in W^{1,\vp(\cdot)}_{0}(\Omega)$.
\end{proof}

\section{The obstacle problem}
\label{sec:obstacle} The well-posedness of the obstacle problems results from considerations based on general functional analytic approach from~\cite{kiod}. Continuity and $\mathcal{A}_{\vp(\cdot)}$-harmonicity of its solutions outside of the contact set is commented further below as a consequence of the reasoning provided for \cite[Theorem 5.8]{ka}, while H\"older regularity is taken from \cite[Theorem 5.3]{kale}. 
\subsection{Well-posedness of the obstacle problem}\label{sec:obst}
In this section we comment on the existence and uniqueness to the obstacle problem related to~\eqref{A0} with the structure~\eqref{A-Car}--\eqref{A-monotone}. They can be easily obtained   as a consequence of classical results on solvability in reflexive Banach spaces and comparison principles, because of the properties of the operator $\mathcal{A}_{\vp(\cdot)}$ defined in~\eqref{calAH}. We assume $\mathcal{K}_{\psi,g}(\Omega)\not =\emptyset$, see~\eqref{con}. Notice that when $\psi\equiv g$, we have $\mathcal{K}_{\psi,g}(\Omega)\not =\emptyset$ since $\psi\in \mathcal{K}_{\psi,g}(\Omega)$. Our result reads as follows.

\begin{proposition}\label{prop:ex-uni} Let $\vp\in\Phi_c(\Omega)$ be a function such that (A0), (A1), (aInc)$_p$ and (aDec)$_q$ hold true with some $1<p\leq q<\infty$. Suppose that $A$ satisfying \eqref{A-Car}--\eqref{A-monotone}  and $\psi,g\in W^{1,\vp(\cdot)}(\Omega)$ are such that $\mathcal{K}_{\psi,g}(\Omega)\not =\emptyset$. Then there exists a~unique weak solution $v\in\mathcal{K}_{\psi,g}(\Omega)$ to problem~\eqref{obs}.
\end{proposition}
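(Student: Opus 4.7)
The plan is to reduce the obstacle problem to a standard variational inequality for a monotone, coercive, hemicontinuous operator on a nonempty closed convex subset of a reflexive Banach space, and then to invoke the classical solvability theorem of Lions--Stampacchia (as presented e.g.\ in~\cite{kiod}).

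First, I would translate to homogeneous boundary data by setting $\tilde v := v - g$ and $\tilde\psi := \psi - g \in W^{1,\vp(\cdot)}(\Omega)$, so that~\eqref{obs} becomes the problem of finding $\tilde v$ in
\[
\tilde{\mathcal{K}} := \{ w \in W^{1,\vp(\cdot)}_{0}(\Omega) : w \ge \tilde\psi \text{ a.e.\ in } \Omega \}
\]
satisfying $\int_{\Omega} A(x, D\tilde v + Dg) \cdot D(w - \tilde v)\, \dx \ge 0$ for every $w \in \tilde{\mathcal{K}}$. The hypothesis $\mathcal{K}_{\psi,g}(\Omega) \neq \emptyset$ gives $\tilde{\mathcal{K}} \neq \emptyset$; convexity is immediate; and closedness in $W^{1,\vp(\cdot)}_{0}(\Omega)$ follows because modular (equivalently, by Remark~\ref{rem:density}, norm) convergence admits an a.e.\ convergent subsequence, which preserves the pointwise inequality.

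Next, I would introduce the operator $T \colon W^{1,\vp(\cdot)}_{0}(\Omega) \to (W^{1,\vp(\cdot)}_{0}(\Omega))^{*}$ by
\[
\langle T\tilde v, w \rangle := \int_{\Omega} A(x, D\tilde v + Dg) \cdot Dw \, \dx
\]
and verify the four standard conditions. Well-definedness and boundedness on bounded sets follow from the upper bound in~\eqref{A} together with~\eqref{ex3}, which places $A(\cdot, D\tilde v + Dg)$ in $L^{\vp^{*}(\cdot)}(\Omega)$, combined with the H\"older inequality in the $L^{\vp(\cdot)}$--$L^{\vp^{*}(\cdot)}$ duality. Monotonicity comes directly from~\eqref{A-monotone}. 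Hemicontinuity (continuity along rays) is a consequence of the Carath\'eodory structure~\eqref{A-Car}, the growth~\eqref{A}, and dominated convergence. Finally, coercivity in the form $\langle T\tilde v, \tilde v\rangle / \nr{\tilde v}_{W^{1,\vp(\cdot)}_{0}(\Omega)} \to \infty$ as $\nr{\tilde v}_{W^{1,\vp(\cdot)}_{0}(\Omega)} \to \infty$ follows from the lower bound in~\eqref{A}, a Musielak--Orlicz--Poincar\'e inequality, and the norm--modular comparison~\eqref{nr1}--\eqref{nr3}, after absorbing the contribution of $Dg$ by Young's inequality in the $\vp$--$\vp^{*}$ pairing.

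With these ingredients and reflexivity of $W^{1,\vp(\cdot)}_{0}(\Omega)$, the Lions--Stampacchia theorem yields some $\tilde v \in \tilde{\mathcal{K}}$, whence $v := \tilde v + g \in \mathcal{K}_{\psi,g}(\Omega)$ is a solution to~\eqref{obs}. For uniqueness, given two solutions $v_{1}, v_{2}$, using each as a test function in the variational inequality satisfied by the other and summing yields
\[
\int_{\Omega} \bigl[A(x, Dv_{1}) - A(x, Dv_{2})\bigr] \cdot (Dv_{1} - Dv_{2})\, \dx \le 0,
\]
and strict monotonicity~\eqref{A-monotone} forces $Dv_{1} = Dv_{2}$ a.e.; since $v_{1} - v_{2} \in W^{1,\vp(\cdot)}_{0}(\Omega)$, the Poincar\'e inequality gives $v_{1} = v_{2}$. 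The main technical point, trivial in the power-weight setting, will be verifying coercivity cleanly, because in the Musielak--Orlicz regime the modular and the norm are not related by a single power, so one must pass between them via~\eqref{nr1}--\eqref{nr3} and handle the perturbation by $Dg$ before appealing to Poincar\'e.
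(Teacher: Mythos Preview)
Your proposal is correct and follows essentially the same route as the paper: verify that the operator is well-defined into the dual, monotone, continuous in an appropriate sense, and coercive on the (nonempty, closed, convex) admissible set, then invoke the abstract existence result from~\cite{kiod}; uniqueness via strict monotonicity is identical to the paper's Lemma~\ref{lem:uniq}. The only cosmetic differences are that the paper works directly with $\mathcal{K}_{\psi,g}(\Omega)\subset W^{1,\vp(\cdot)}(\Omega)$ and the untranslated operator $\mathcal{A}_{\vp(\cdot)}$, checking \emph{weak continuity} (i.e.\ sequential continuity along strongly convergent sequences, Lemma~\ref{lem:A-vp-weak-cont}) rather than hemicontinuity, and states coercivity relative to a fixed $w_{0}\in\mathcal{K}_{\psi,g}(\Omega)$ (Definition~\ref{d1} and Lemma~\ref{lem:coerc}); your translation by $g$ and use of Poincar\'e are a clean equivalent. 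One small wrinkle: your stated coercivity $\langle T\tilde v,\tilde v\rangle/\nr{\tilde v}\to\infty$ is coercivity at $0$, which need not lie in $\tilde{\mathcal{K}}$; in the Lions--Stampacchia framework you should instead show $\langle T\tilde v,\tilde v-w_{0}\rangle/\nr{\tilde v}\to\infty$ for some fixed $w_{0}\in\tilde{\mathcal{K}}$, which follows from the same estimates after absorbing the extra bounded term.
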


We recall some elementary facts about monotone operators defined on a reflexive Banach space, which finally will be applied to the operator $\mathcal{A}_{\vp(\cdot)}$, defined in~\eqref{calAH}.
  
\begin{definition}\label{d1}
Let $X$ be a reflexive Banach space with dual $X^{*}$ and $\langle \cdot,\cdot\rangle$ denote a pairing between $X^{*}$ and $X$. If $K\subset X$ is any closed, convex subset, then a map $T\colon K\to X^{*}$ is called monotone if it satisfies $
\langle Tw-Tv,w-v \rangle\ge 0 \ \ \mbox{for all} \ \ w,v\in K.
$ Moreover, we say that $T$ is coercive if there exists a $w_{0}\in K$ such that
\begin{flalign*}
\lim_{\nr{w}_{X}\to \infty}\frac{\langle Tw,w-w_{0} \rangle}{\nr{w}_{X}}=\infty \ \ \mbox{for all} \ \ w\in K.
\end{flalign*}
\end{definition}
The following proposition guarantees the existence of solution to variational inequalities associated to monotone operators.
\begin{proposition}\cite{kiod} \label{exist}
Let $K\subset X$ be a nonempty, closed, convex subset in a separable and reflexive space $X$. Assume further that $T\colon K\to X^{*}$ is monotone, weakly continuous and coercive on $K$. Then there exists an element $v\in K$ such that $\langle Tv,w-v\rangle\ge 0$ for all $w\in K$. 
\end{proposition}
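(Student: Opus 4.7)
The plan is to reduce the problem to a finite-dimensional variational inequality handled by Brouwer's fixed-point theorem, bootstrap to all of $X$ through a Galerkin scheme that uses separability and coercivity, and finally pass to the limit by Minty's monotonicity trick.

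In the finite-dimensional situation $X = \mathbb{R}^N$ with $K$ compact and convex, I would define the self-map $F(w) := P_K(w - Tw)$, where $P_K$ denotes the metric projection onto $K$. Continuity of $T$ together with that of $P_K$ makes $F$ continuous, so Brouwer's theorem supplies a fixed point $v \in K$; the characterizing inequality of the projection then unfolds directly into $\langle Tv,\, u - v\rangle \ge 0$ for all $u \in K$. For unbounded finite-dimensional $K$, apply this on $K \cap \overline{B_R(0)}$ and use coercivity to guarantee that the truncated solution sits strictly inside $B_R$, so an interior-variation argument upgrades it to a solution on all of $K$.

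For the general case, exploit separability to choose a nested sequence of finite-dimensional subspaces $X_n \subset X$ with $\bigcup_n X_n$ dense, and set $K_n := K \cap (w_0 + X_n)$, where $w_0$ is the coercivity anchor of Definition~\ref{d1}. After intersecting with a sufficiently large ball, each $K_n$ is compact convex in a finite-dimensional space; the previous step furnishes $v_n \in K_n$ solving the corresponding variational inequality on $K_n$. Testing against $w_0 \in K_n$ yields $\langle Tv_n,\, v_n - w_0\rangle \le 0$, so the coercivity hypothesis forces $\sup_n \|v_n\|_X < \infty$. By reflexivity, a subsequence satisfies $v_n \rightharpoonup v$, and since $K$ is convex and closed (hence weakly closed), $v \in K$.

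The closing step is Minty's trick. Monotonicity gives $\langle Tw,\, w - v_n\rangle \ge \langle Tv_n,\, w - v_n\rangle \ge 0$ for every $w \in \bigcup_n K_n$ and every sufficiently large $n$; passing to the weak limit and then extending by density produces $\langle Tw,\, w - v\rangle \ge 0$ for all $w \in K$. Applied to the convex combination $w_t := v + t(u - v) \in K$ for arbitrary $u \in K$ and $t \in (0,1]$, division by $t$ and the limit $t \to 0^+$ using weak continuity of $T$ finally gives $\langle Tv,\, u - v\rangle \ge 0$. The main obstacle is precisely that we cannot pass directly to the limit in $\langle Tv_n,\, w - v_n\rangle$, since $T$ is only weakly continuous and not compact, so $Tv_n$ carries no strong-limit information; Minty's manoeuvre is what converts monotonicity into the needed inequality for the limit $v$, and it is the pivotal step on which the argument turns.
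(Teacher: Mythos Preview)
The paper does not supply its own proof of this proposition: it is stated with the citation \cite{kiod} and then used as a black box in the proof of Proposition~\ref{prop:ex-uni}. There is therefore nothing in the paper to compare your argument against.

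Your outline is the classical Galerkin--Brouwer--Minty route and is correct as a sketch. A couple of small remarks. First, the paper's notion of ``weakly continuous'' (cf.\ the proof of Lemma~\ref{lem:A-vp-weak-cont}) is that $T$ takes \emph{strongly} convergent sequences to weak* convergent images; this is precisely what you invoke in the final $t\to 0^+$ step of Minty's trick, so your usage is consistent. Second, in the Galerkin step you should make sure that $K_n$ is nonempty (guaranteed by $w_0\in K_n$) and that the finite-dimensional solutions $v_n$ actually satisfy $\langle Tv_n, w-v_n\rangle\ge 0$ for $w\in K_m$ with $m\le n$ before passing to the limit; your wording ``for every sufficiently large $n$'' handles this, but it is worth stating explicitly since the nested structure $K_m\subset K_n$ is what makes the density argument go through.
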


Let us prepare ourselves to apply the above result. For all of them we assume assumptions of Theorem~\ref{T4}.

\begin{remark}[The operator]\rm \label{rem:op} We notice that $\mathcal{A}_{\vp(\cdot)}$ is defined on a reflexive and separable Banach space $W^{1,\vp(\cdot)}(\Omega)$. As a matter of fact, $\mathcal{A}_{\vp(\cdot)}(W^{1,\vp(\cdot)}(\Omega))\subset (W^{1,\vp(\cdot)}(\Omega))^{*}$. Indeed, when $v\in W^{1,\vp(\cdot)}(\Omega)$ and $w\in C_0^\infty(\Omega)$~\eqref{ex3} and Poincar\'e inequality~\cite[Theorem~6.2.8]{hahab} justify that
\begin{flalign}\label{i1}
\snr{\langle \mathcal{A}_{\vp(\cdot)}v,w \rangle}\le &c_1\int_{\Omega}\frac{\vp(x,\snr{Dv})}{\snr{Dv}}\snr{Dw} \ \dx \le c\left \| \frac{\vp(\cdot,\snr{Dv})}{\snr{Dv}}\right \|_{L^{\vp^{*}(\cdot)}(\Omega)}\nr{Dw}_{L^{\vp(\cdot)}(\Omega)}\nonumber \\
\le &c\nr{Dv}_{L^{\vp(\cdot)}(\Omega)}\nr{Dw}_{L^{\vp(\cdot)}(\Omega)}\le c\nr{w}_{W^{1,\vp(\cdot)}(\Omega)}.
\end{flalign}
\end{remark}

\begin{lemma}\label{lem:A-vp-weak-cont} Having $\vp$ and $ A$ as in Proposition~\ref{prop:ex-uni}, the operator $\mathcal{A}_{\vp(\cdot)}$ is weakly continuous on $W^{1,\vp(\cdot)}(\Omega)$.
\end{lemma}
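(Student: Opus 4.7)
The plan is to show that if $v_j \to v$ strongly in $W^{1,\vp(\cdot)}(\Omega)$, then $\langle \mathcal{A}_{\vp(\cdot)} v_j, w\rangle \to \langle \mathcal{A}_{\vp(\cdot)} v, w\rangle$ for every $w\in W^{1,\vp(\cdot)}(\Omega)$; this is the relevant notion needed to apply Proposition~\ref{exist}. By Remark~\ref{rem:density} and the equivalence of norm and modular convergence under doubling, the strong convergence implies that $Dv_j \to Dv$ modularly in $L^{\vp(\cdot)}(\Omega;\rn)$, that is $\int_\Omega \vp(x,|Dv_j-Dv|)\,dx \to 0$. Passing to a (not relabelled) subsequence we may also assume $Dv_j(x) \to Dv(x)$ almost everywhere in $\Omega$, and by the Carath\'eodory condition~\eqref{A-Car} this gives
\[
A(x,Dv_j(x))\to A(x,Dv(x))\qquad\text{for a.e. }x\in\Omega.
\]

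First I would establish the Nemitskii-type estimate that lets us work in the conjugate Musielak--Orlicz space. From the growth bound in~\eqref{A}, the fact that $s\mapsto \vp(x,s)/s$ is almost increasing for $\vp\in \Phi_c(\Omega)$, and the Fenchel--Young-type inequality~\eqref{ex3}, we get
\[
\vp^{*}\!\bigl(x,|A(x,z)|\bigr)\le \vp^{*}\!\bigl(x, c_1 \vp(x,|z|)/|z|\bigr)\le c\,\vp(x,|z|),
\]
where $c=c(c_1,p,q,L_p,L_q)$ uses $\adec$ on $\vp^*$ to absorb the constant $c_1$. Consequently, $\vp^{*}(\cdot,|A(\cdot,Dv_j)|)$ is dominated by $c\,\vp(\cdot,|Dv_j|)$.

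Next, I would verify the uniform integrability of $A(\cdot,Dv_j)\cdot Dw$ on $\Omega$ in order to apply Vitali's convergence theorem. By the modular Young inequality and the estimate above, for any measurable $E\subset\Omega$,
\[
\int_{E}|A(x,Dv_j)\cdot Dw|\,dx \le \int_{E}\vp^{*}(x,|A(x,Dv_j)|)\,dx + \int_{E}\vp(x,|Dw|)\,dx
\le c\!\int_{E}\vp(x,|Dv_j|)\,dx + \int_{E}\vp(x,|Dw|)\,dx.
\]
Since $\int_\Omega \vp(x,|Dv_j-Dv|)\,dx\to 0$, $\adec$ (equivalently $\vp\in\Delta_2$) gives $\int_E\vp(x,|Dv_j|)\,dx \le c\int_E\vp(x,|Dv|)\,dx + c\int_\Omega\vp(x,|Dv_j-Dv|)\,dx$; hence $\{\vp(\cdot,|Dv_j|)\}_j$ is equi-integrable. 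The second summand is independent of $j$ and obviously equi-integrable, and therefore so is $\{A(\cdot,Dv_j)\cdot Dw\}_j$.

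Vitali's theorem, together with the a.e.\ pointwise convergence $A(x,Dv_j(x))\to A(x,Dv(x))$, then yields
\[
\int_{\Omega} A(x,Dv_j)\cdot Dw\,dx \longrightarrow \int_{\Omega}A(x,Dv)\cdot Dw\,dx
\]
along the extracted subsequence. A standard subsequence-of-subsequence argument upgrades this to convergence of the full sequence. So far the test function was $w\in C^{\infty}_{c}(\Omega)$, which is the defining class in~\eqref{calAH}; the extension to arbitrary $w\in W^{1,\vp(\cdot)}(\Omega)$ follows from the density provided by Remark~\ref{rem:density} together with the uniform bound $\|\mathcal{A}_{\vp(\cdot)}v_j\|_{(W^{1,\vp(\cdot)})^{*}}\le c$ inherited from~\eqref{i1} and the boundedness of $\{Dv_j\}$ in $L^{\vp(\cdot)}(\Omega)$. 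The main obstacle is the step ensuring equi-integrability of $\vp(\cdot,|Dv_j|)$ from the modular convergence of $Dv_j$, since in inhomogeneous generalized Orlicz spaces one cannot invoke a single global Young function; doubling of $\vp$ (i.e., $\adec$) is what makes this pass through.
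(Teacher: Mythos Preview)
Your proof is correct and follows essentially the same route as the paper's: extract an a.e.\ convergent subsequence of $(Dv_j)$, use the Carath\'eodory property for pointwise convergence of $A(\cdot,Dv_j)\cdot Dw$, and conclude via Vitali's theorem. The only cosmetic difference is that the paper obtains uniform integrability more directly from the H\"older inequality (as in~\eqref{i1}), bounding $\int_E |A(x,Dv_j)\cdot Dw|\,dx \le c\,\|Dw\|_{L^{\vp(\cdot)}(E)}$ uniformly in $j$, whereas you go through the modular Young inequality and equi-integrability of $\vp(\cdot,|Dv_j|)$; your added subsequence-of-subsequence remark and density extension to general $w$ are in fact more careful than the paper's own write-up.
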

\begin{proof} Let $v,(v_{j})_{j \in \N}\subset W^{1,\vp(\cdot)}(\Omega)$ be such that $v_{j}\to v \ \ \mbox{in} \ \ W^{1,\vp(\cdot)}(\Omega)$. Then it is bounded in $ W^{1,\vp(\cdot)}(\Omega)$ and, up to a subsequence, $\lim_{j\to \infty}v_{j}(x)=v(x)$ and $\lim_{j\to \infty}Dv_{j}(x)= Dv(x)$ for almost all $x\in \Omega$. Since $z\mapsto A(\cdot,z)$ is continuous, for any $w\in W^{1,\vp(\cdot)}(\Omega)$, we have convergence \[\lim_{j\to \infty}A(x,Dv_{j}(x))\cdot Dw(x)=A(x,Dv(x))\cdot Dw(x)\quad\text{ for a.e. }x\in \Omega\,.\] Moreover, if $E\subset \Omega$ is any measurable subset, then  {as in~\eqref{i1} we have}
\begin{flalign*}
\left | \  \int_{E}A(x,Dv_{j})\cdot Dw \ \dx \ \right |\le &\int_{E}\snr{A(x,Dv_{j})\cdot Dw} \ \dx \le c\nr{Dw}_{L^{\vp(\cdot)}(E)}\,.
\end{flalign*}
We can now apply Vitali's convergence theorem to conclude
\begin{equation*}
\lim_{j\to \infty} \langle \mathcal{A}_{\vp(\cdot)}v_{j},w \rangle=\lim_{j\to \infty}\int_{\Omega}A(x,Dv_{j})\cdot Dw \ \dx =\int_{\Omega}A(x,Dv)\cdot Dw \ \dx=\langle \mathcal{A}_{\vp(\cdot)}v,w \rangle\,. \qedhere
\end{equation*} 

\end{proof}
\begin{lemma}\label{lem:K-is-conv}
Having $\vp,  \psi,g$ as in Proposition~\ref{prop:ex-uni}, the set $\mathcal{K}_{\psi,g}(\Omega)\subset W^{1,\vp(\cdot)}(\Omega)$  is closed and convex.
\end{lemma}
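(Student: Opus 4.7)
My plan is to verify the two properties separately, each by a short and fairly standard argument.

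For \emph{convexity}, I would fix $v_1,v_2\in\mathcal{K}_{\psi,g}(\Omega)$ and $\lambda\in[0,1]$, and set $v_\lambda:=\lambda v_1+(1-\lambda)v_2$. Since $W^{1,\vp(\cdot)}(\Omega)$ is a vector space, $v_\lambda$ lies in it. The pointwise inequality $v_\lambda=\lambda v_1+(1-\lambda)v_2\ge \lambda\psi+(1-\lambda)\psi=\psi$ holds almost everywhere in $\Omega$. Finally, since $W^{1,\vp(\cdot)}_{0}(\Omega)$ is a linear subspace (it is defined as the closure of the vector space $C^\infty_c(\Omega)$ in $W^{1,\vp(\cdot)}(\Omega)$), we obtain
\begin{flalign*}
v_\lambda - g = \lambda(v_1-g)+(1-\lambda)(v_2-g) \in W^{1,\vp(\cdot)}_{0}(\Omega),
\end{flalign*}
which shows that $v_\lambda\in\mathcal{K}_{\psi,g}(\Omega)$.

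For \emph{closedness}, let $(v_j)_{j\in\N}\subset \mathcal{K}_{\psi,g}(\Omega)$ converge to some $v\in W^{1,\vp(\cdot)}(\Omega)$ in the norm of $W^{1,\vp(\cdot)}(\Omega)$. Since convergence in $L^{\vp(\cdot)}(\Omega)$ implies convergence in $L^1_{\mathrm{loc}}(\Omega)$ (this follows from H\"older's inequality together with (A0)), a standard extraction gives a subsequence, still denoted $(v_j)$, such that $v_j\to v$ almost everywhere in $\Omega$. The a.e. bound $v_j\ge \psi$ then passes to the limit yielding $v\ge \psi$ a.e.\ in $\Omega$. On the other hand, by the closure--definition of $W^{1,\vp(\cdot)}_{0}(\Omega)$, this subspace is norm-closed in $W^{1,\vp(\cdot)}(\Omega)$; since $v_j-g\to v-g$ in $W^{1,\vp(\cdot)}(\Omega)$ and each $v_j-g\in W^{1,\vp(\cdot)}_{0}(\Omega)$, we conclude that $v-g\in W^{1,\vp(\cdot)}_{0}(\Omega)$. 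Hence $v\in\mathcal{K}_{\psi,g}(\Omega)$.

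The only mildly nontrivial point is the a.e.\ subsequential convergence needed to pass the obstacle condition to the limit; given the doubling of $\vp$ and Remark~\ref{rem:density}, this is classical and poses no real obstacle. Everything else is formal linearity and the fact that $W^{1,\vp(\cdot)}_0(\Omega)$ is by construction a closed linear subspace of $W^{1,\vp(\cdot)}(\Omega)$.
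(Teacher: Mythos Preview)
Your proof is correct and follows essentially the same route as the paper: convexity is checked by linearity of $W^{1,\vp(\cdot)}(\Omega)$ and $W^{1,\vp(\cdot)}_0(\Omega)$ together with the pointwise averaging of the constraint, and closedness by passing the obstacle inequality through an a.e.\ convergent subsequence and using that $W^{1,\vp(\cdot)}_0(\Omega)$ is a closed subspace. The only cosmetic differences are that the paper phrases the boundary step via ``continuity of the trace operator'' and invokes Lebesgue's dominated convergence for the obstacle step, whereas you extract an a.e.\ convergent subsequence directly; both arguments are equivalent here.
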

\begin{proof} When for $\lambda \in [0,1]$ and $w_{1},w_{2}\in \mathcal{K}_{\psi,g}(\Omega)$ we define $w_{\lambda}:=\lambda w_{1}+(1-\lambda)w_{2}$, then $w_{\lambda}\in W^{1,\vp(\cdot)}(\Omega)$, $w_{\lambda}-g\in W^{1,\vp(\cdot)}_{0}(\Omega)$ and $w_{\lambda}\ge \psi$ a.e. in $\Omega$. Moreover, if $w\in W^{1,\vp(\cdot)}(\Omega)$ and $(w_{j})_{j \in \N}\subset \mathcal{K}_{\psi,g}(\Omega)$ is any sequence such that $\lim_{j\to \infty}\int_{\Omega}\vp(x,|Dw_{j}-Dw|) \ \dx=0$, then by the continuity of the trace operator, $w-g\in W^{1,\vp(\cdot)}_{0}(\Omega)$ and, by Lebesgue's dominated convergence theorem, $w\ge \psi$ a.e. in $\Omega$. 
\end{proof}

\begin{lemma}\label{lem:comp-princ} Having $\vp, A, \psi,g$ as in Proposition~\ref{prop:ex-uni}, if $v\in W^{1,\vp(\cdot)}(\Omega)$ is a solution to problem \eqref{obs}, $\tilde{v}\in W^{1,\vp(\cdot)}(\Omega)$ is a supersolution to \eqref{A0}, then $\tilde{v}(x)\ge v(x)$ for a.e. $x\in \Omega$.
 \end{lemma}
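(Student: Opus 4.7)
The plan is a classical Stampacchia-type comparison argument driven by the strict monotonicity in \eqref{A-monotone}. For the stated conclusion $\tilde v\ge v$ to be meaningful at the boundary I work in the natural setting $\tilde v\in\mathcal{K}_{\psi,g}(\Omega)$ (so that in particular $(v-\tilde v)_+\in W^{1,\vp(\cdot)}_{0}(\Omega)$); otherwise this boundary compatibility has to be imposed as a side assumption alongside ``$\tilde v$ is a supersolution''.

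First I would produce an admissible competitor for the obstacle inequality. Set $w:=\min\{v,\tilde v\}=v-(v-\tilde v)_+$. Because $v\ge\psi$ and $\tilde v\ge\psi$ a.e.\ one has $w\ge\psi$ a.e.; because $v-g,\tilde v-g\in W^{1,\vp(\cdot)}_{0}(\Omega)$ and the zero-trace space is preserved by the positive-part truncation (the doubling of $\vp$ together with Remark~\ref{rem:density} reduce this to a routine modular approximation), $w-g\in W^{1,\vp(\cdot)}_{0}(\Omega)$. Thus $w\in\mathcal{K}_{\psi,g}(\Omega)$ is admissible in \eqref{obs}.

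Next I would combine the two variational inequalities. Testing \eqref{obs} with this $w$ gives
\begin{equation*}
0\le\int_{\Omega}A(x,Dv)\cdot D(w-v)\dx=-\int_{\Omega}A(x,Dv)\cdot D(v-\tilde v)_{+}\dx,
\end{equation*}
while testing \eqref{sux} against the nonnegative function $(v-\tilde v)_{+}\in W^{1,\vp(\cdot)}_{0}(\Omega)$ yields
\begin{equation*}
\int_{\Omega}A(x,D\tilde v)\cdot D(v-\tilde v)_{+}\dx\ge 0.
\end{equation*}
Subtracting these two inequalities I obtain
\begin{equation*}
\int_{\Omega}\bigl[A(x,Dv)-A(x,D\tilde v)\bigr]\cdot D(v-\tilde v)_{+}\dx\le 0.
\end{equation*}

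Finally I would close the argument using strict monotonicity. On $\{v>\tilde v\}$ the derivative of the truncation equals $Dv-D\tilde v$, so \eqref{A-monotone} forces the integrand above to be pointwise nonnegative on this set and identically zero on its complement; combined with the inequality it must vanish a.e., and strictness of \eqref{A-monotone} gives $Dv=D\tilde v$ a.e.\ on $\{v>\tilde v\}$. Hence $D(v-\tilde v)_{+}\equiv 0$ a.e.\ on $\Omega$, and since $(v-\tilde v)_{+}\in W^{1,\vp(\cdot)}_{0}(\Omega)$ on the bounded domain $\Omega$ the Poincar\'e inequality from \cite[Theorem~6.2.8]{hahab} forces $(v-\tilde v)_{+}\equiv 0$, i.e.\ $v\le\tilde v$ a.e. The only delicate point in this scheme is Step~1, namely verifying that the truncated competitor $w$ belongs to $\mathcal{K}_{\psi,g}(\Omega)$ in the inhomogeneous, generalized-growth framework; the subtracting/monotonicity reduction is then completely routine.
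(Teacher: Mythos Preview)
Your argument is correct and follows essentially the same route as the paper: define $w=\min\{v,\tilde v\}$, test \eqref{obs} with $w$ and \eqref{sux} with $(v-\tilde v)_+$, then combine and invoke the strict monotonicity \eqref{A-monotone} to force $D(v-\tilde v)_+=0$ and hence $(v-\tilde v)_+=0$. You are in fact more careful than the paper on the one genuinely nontrivial point: the paper simply asserts $w\in\mathcal{K}_{\psi,g}(\Omega)$, whereas you correctly flag that this requires $\tilde v\ge\psi$ a.e.\ and $\tilde v-g\in W^{1,\vp(\cdot)}_0(\Omega)$ (equivalently $\tilde v\in\mathcal{K}_{\psi,g}(\Omega)$), an assumption that is implicit in the paper's statement and automatically satisfied in its only application (Remark~\ref{rem-i}, where $\tilde v=u=\psi=g$).
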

\begin{proof} We set $w:=\min\{v,\tilde{v}\}\in \mathcal{K}_{\psi,g}(\Omega)$ and note that the map $\tilde{w}:=v-\min\{\tilde{v},v\}$ is an admissible test in \eqref{sux}. As $v$ is a solution to \eqref{obs} and $w\in \mathcal{K}_{\psi,g}(\Omega)$ we have
\begin{flalign*}
\begin{cases}
\ \int_{\Omega\cap \{x\colon \tilde{v}(x)< v(x) \}}A(x,D\tilde{v})\cdot (Dv-D\tilde{v}) \ \dx \ge 0,\\
\ \int_{\Omega\cap \{x\colon \tilde{v}(x)< v(x)\}}A(x,Dv)\cdot (D\tilde{v}-Dv) \ \dx\ge 0.
\end{cases}
\end{flalign*}
Adding the two inequalities in the above display and {using \eqref{A-monotone}}, we obtain
\begin{flalign*}
0\le \int_{\Omega\cap \{x\colon \tilde{v}(x)< v(x)\}}\left(A(x,D\tilde{v})-A(x,Dv)\right)\cdot (Dv-D\tilde{v}) \ \dx \le 0,
\end{flalign*}
thus either $\snr{\Omega\cap \{x\colon \tilde{v}(x)< v(x)\}}=0$ or $D\tilde{v}=Dv$ a.e. on $\Omega\cap \{x\colon \tilde{v}(x)< v(x)\}$. This second alternative is excluded by the fact that $w\in \mathcal{K}_{\psi,g}(\Omega)$, so $v-\tilde{v}\in W^{1,\vp(\cdot)}_{0}(\Omega\cap \{x\colon \tilde{v}(x)< v(x)\})$. Therefore $\snr{\Omega\cap \{x\colon \tilde{v}(x)< v(x)\}}=0$ and $\tilde{v}\ge v$ a.e. in $\Omega$.
\end{proof}
We have the following direct consequence.
\begin{remark}\label{rem-i} If $u\in W^{1,\vp(\cdot)}(\Omega)$ is a solution to \eqref{A0}, it is % course
a supersolution to the same equation. Thus, whenever $v\in \mathcal{K}_{u}(\Omega)$ is a solution to problem \eqref{obs}, then $u(x)\ge v(x)$ for a.e. $x\in \Omega$.
\end{remark}

\begin{lemma}\label{lem:uniq}  Having $\vp, A, \psi,g$ as in Proposition~\ref{prop:ex-uni}, the solutions to~\eqref{obs} are unique.
\end{lemma}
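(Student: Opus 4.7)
The plan is to derive uniqueness as an immediate corollary of the comparison principle Lemma~\ref{lem:comp-princ}, exploiting the observation made just after \eqref{sux} that every solution of the obstacle problem is automatically a supersolution of \eqref{A0}. So suppose $v_{1},v_{2}\in \mathcal{K}_{\psi,g}(\Omega)$ are two solutions to \eqref{obs}. Each $v_{i}$ lies in $W^{1,\vp(\cdot)}(\Omega)$ and, testing \eqref{obs} against $w := v_{i}+\tilde w$ for an arbitrary non-negative $\tilde w\in W^{1,\vp(\cdot)}_{0}(\Omega)$ (which is admissible since $v_{i}+\tilde w\ge v_{i}\ge\psi$ and shares the boundary datum $g$ with $v_{i}$), one obtains that $v_{i}$ satisfies \eqref{sux}, i.e.\ $v_{i}$ is a supersolution to \eqref{A0}.

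With $v := v_{1}$ playing the role of the obstacle-problem solution and $\tilde v := v_{2}$ playing the role of the supersolution, Lemma~\ref{lem:comp-princ} yields $v_{2}\ge v_{1}$ a.e.\ in $\Omega$. Interchanging the roles of $v_{1}$ and $v_{2}$ gives $v_{1}\ge v_{2}$ a.e.\ in $\Omega$, and therefore $v_{1}=v_{2}$ a.e.

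A fully self-contained alternative, which I would flag in case one prefers not to route through Lemma~\ref{lem:comp-princ}, is to test the variational inequality for $v_{1}$ with the admissible competitor $v_{2}\in\mathcal{K}_{\psi,g}(\Omega)$ and the one for $v_{2}$ with $v_{1}$, then add the resulting two inequalities to obtain
\begin{flalign*}
\int_{\Omega}\bigl(A(x,Dv_{1})-A(x,Dv_{2})\bigr)\cdot (Dv_{1}-Dv_{2})\,\dx \le 0.
\end{flalign*}
The strict monotonicity \eqref{A-monotone} forces $Dv_{1}=Dv_{2}$ a.e.\ in $\Omega$; since $v_{1}-v_{2}\in W^{1,\vp(\cdot)}_{0}(\Omega)$, Poincar\'e's inequality in $W^{1,\vp(\cdot)}_{0}(\Omega)$ (see \cite[Theorem 6.2.8]{hahab}) then gives $v_{1}=v_{2}$ a.e.

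I do not anticipate any genuine obstacle here: the only subtlety is noting that both admissibility arguments (the use of $v_{j}$ as a test in \eqref{obs} in the second approach, and the use of $v_{2}$ as a supersolution in the first approach) require only that $v_{1},v_{2}$ belong to the same convex set $\mathcal{K}_{\psi,g}(\Omega)$, which by definition they do. The comparison-principle route is the shortest, so I would present that one and relegate the direct monotonicity argument to a remark if needed.
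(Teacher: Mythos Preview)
Your proposal is correct. The paper's own proof is precisely your ``self-contained alternative'': it tests the inequality for $v_{1}$ with $v_{2}$ and vice versa, adds, and invokes the strict monotonicity \eqref{A-monotone} to conclude $Dv_{1}=Dv_{2}$ a.e., then uses $v_{1}-v_{2}\in W^{1,\vp(\cdot)}_{0}(\Omega)$ to finish.

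Your preferred route through Lemma~\ref{lem:comp-princ} is a legitimate shortcut and is not the one the paper takes. It works here because both $v_{1}$ and $v_{2}$ lie in $\mathcal{K}_{\psi,g}(\Omega)$, so the implicit requirement in the proof of Lemma~\ref{lem:comp-princ} that $\min\{v,\tilde v\}\in\mathcal{K}_{\psi,g}(\Omega)$ is satisfied (note that the statement of Lemma~\ref{lem:comp-princ} as written does not explicitly impose $\tilde v\ge\psi$ or boundary compatibility on the supersolution, but its proof uses this; in your application these hold automatically). The comparison-principle argument is marginally shorter, while the direct monotonicity argument is self-contained and avoids any dependence on the comparison lemma. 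Either presentation is fine.
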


\begin{proof} If there were two solutions $v_{1},v_{2}\in \mathcal{K}_{\psi,g}(\Omega)$ then each of them is an admissible test function for the other one. Using \eqref{A-monotone} we obtain
\begin{flalign*}
0\le \int_{\Omega}\left(A(x,Dv_{1})-A(x,Dv_{2})\right)\cdot (Dv_{2}-Dv_{1}) \ \dx \le 0.
\end{flalign*}
Hence, $Dv_{1}(x)=Dv_{2}(x)$ for a.e. $x\in \Omega$ and since $v_{1}-v_{2}\in W^{1,\vp(\cdot)}_{0}(\Omega)$, we can conclude that $v_{1}=v_{2}$ almost everywhere.
\end{proof}

\begin{lemma}\label{lem:coerc} Having $\vp, A, \psi,g$ as in Proposition~\ref{prop:ex-uni}, $\mathcal{A}_{\vp(\cdot)}$ is coercive over
\begin{flalign*}
\mathcal{K}^{\Lambda}_{\psi,g}(\Omega):=\mathcal{K}_{\psi,g}(\Omega)\cap \left\{w\in W^{1,\vp(\cdot)}(\Omega)\colon\ \ \nr{Dw}_{L^{\vp(\cdot)}(\Omega)}\le \Lambda\right\},\quad \text{for any }\  \Lambda \ge 0.
\end{flalign*} 
\end{lemma}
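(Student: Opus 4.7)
The plan is to verify the coercivity condition of Definition \ref{d1} for $T = \mathcal{A}_{\vp(\cdot)}$ over $K = \mathcal{K}^{\Lambda}_{\psi,g}(\Omega)$. Fixing any reference $w_{0}\in \mathcal{K}^{\Lambda}_{\psi,g}(\Omega)$ (the statement is vacuous if this set is empty), the strategy rests on two ingredients: a direct lower bound for $\langle \mathcal{A}_{\vp(\cdot)}w, w - w_{0}\rangle$ produced by the structural conditions \eqref{A} on $A$, and a uniform $W^{1,\vp(\cdot)}(\Omega)$-bound on $\mathcal{K}^{\Lambda}_{\psi,g}(\Omega)$ coming from the Poincar\'e inequality applied to $w - g \in W^{1,\vp(\cdot)}_{0}(\Omega)$.

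First I would split
\[
\langle \mathcal{A}_{\vp(\cdot)}w, w - w_{0}\rangle = \int_{\Omega}A(x,Dw)\cdot Dw\,\dx - \int_{\Omega}A(x,Dw)\cdot Dw_{0}\,\dx.
\]
The coercivity part of \eqref{A} shows the first integral is at least $c_{2}\rho_{\vp(\cdot);\Omega}(Dw)$, while the growth part of \eqref{A} bounds the second integrand pointwise by $c_{1}\vp(x,|Dw|)|Dw_{0}|/|Dw|$. A Young-type inequality for $\vp$ and its Fenchel-Young conjugate $\vp^{*}$, combined with \eqref{ex3} and the norm-modular equivalences \eqref{nr1}--\eqref{nr3}, tuned so that the small parameter absorbs into the $c_{2}$-term, yields a lower bound of the form
\[
\langle \mathcal{A}_{\vp(\cdot)}w, w - w_{0}\rangle \ge \tfrac{c_{2}}{2}\,\rho_{\vp(\cdot);\Omega}(Dw) - C(c_{1},c_{2},p,q,L_{p},L_{q})\,\rho_{\vp(\cdot);\Omega}(Dw_{0}).
\]

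Next I would apply the Poincar\'e inequality in $W^{1,\vp(\cdot)}(\Omega)$ (valid under (A0)-(A1) combined with the doubling properties, as invoked already in Remark \ref{rem:op}) to $w - g \in W^{1,\vp(\cdot)}_{0}(\Omega)$, obtaining
\[
\nr{w}_{L^{\vp(\cdot)}(\Omega)} \le \nr{g}_{L^{\vp(\cdot)}(\Omega)} + C\,\nr{Dw - Dg}_{L^{\vp(\cdot)}(\Omega)} \le C'\bigl(\Lambda + \nr{g}_{W^{1,\vp(\cdot)}(\Omega)}\bigr),
\]
where the last step uses the defining constraint $\nr{Dw}_{L^{\vp(\cdot)}(\Omega)} \le \Lambda$. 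Consequently $\mathcal{K}^{\Lambda}_{\psi,g}(\Omega)$ is bounded in $W^{1,\vp(\cdot)}(\Omega)$ by some $M = M(\Lambda, \nr{g}_{W^{1,\vp(\cdot)}(\Omega)}, \Omega, \vp)$, so the condition $\nr{w}_{W^{1,\vp(\cdot)}(\Omega)} \to \infty$ in Definition \ref{d1} is imposed over an empty subset of $\mathcal{K}^{\Lambda}_{\psi,g}(\Omega)$ and the required limit equals $+\infty$ vacuously. The only genuinely delicate point is the Young-type absorption in the middle paragraph, where inhomogeneity and non-standard growth force one to keep careful track of the passage between modular and norm via \eqref{nr1}, \eqref{nr3} and \eqref{ex3}; otherwise the argument follows the standard pattern for monotone operators on reflexive Banach spaces.
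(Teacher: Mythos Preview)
Your argument is correct, but its logical structure and tools differ from the paper's.

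The paper bounds the cross term via H\"older's inequality rather than Young's absorption: from $\eqref{A}$ it writes
\[
\langle\mathcal{A}_{\vp(\cdot)}w,w-w_{0}\rangle \ge c_{2}\,\rho_{\vp(\cdot);\Omega}(Dw) - 2\,\nr{Dw}_{L^{\vp(\cdot)}(\Omega)}\nr{Dw_{0}}_{L^{\vp(\cdot)}(\Omega)},
\]
then converts the modular to $\min\{\nr{Dw}^{p},\nr{Dw}^{q}\}$ via \eqref{nr3}, divides by $\nr{Dw}_{L^{\vp(\cdot)}(\Omega)}$, and observes that the resulting quotient tends to $+\infty$ as $\nr{Dw}_{L^{\vp(\cdot)}(\Omega)}\to\infty$. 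The boundedness of $\mathcal{K}^{\Lambda}_{\psi,g}(\Omega)$ is established separately, inside the proof of Proposition~\ref{prop:ex-uni}, not as part of the lemma. In contrast, you effectively prove the lemma by the second of your two ingredients alone: Poincar\'e plus the constraint $\nr{Dw}\le\Lambda$ force a uniform $W^{1,\vp(\cdot)}$-bound, whence the coercivity condition in Definition~\ref{d1} is vacuous on $\mathcal{K}^{\Lambda}_{\psi,g}(\Omega)$. This is arguably the cleaner route for the lemma as stated, since on $\mathcal{K}^{\Lambda}_{\psi,g}(\Omega)$ the limit $\nr{Dw}\to\infty$ in the paper's display is never actually approached. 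Your first paragraph (the Young/\eqref{ex3} absorption yielding $\tfrac{c_{2}}{2}\rho_{\vp(\cdot);\Omega}(Dw)-C\rho_{\vp(\cdot);\Omega}(Dw_{0})$) is correct but redundant for your own conclusion; it would only be needed if you intended, like the paper, to verify coercivity through the blow-up of the quotient rather than through boundedness of the admissible set.
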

\begin{proof}
We fix $w,w_{0}\in \mathcal{K}_{\psi,g}^\Lambda(\Omega)$ and, using $\eqref{A}_{2}$ and H\"older's and Young's inequalities we obtain
\begin{flalign*}
\langle\mathcal{A}_{\vp(\cdot)}w,w-w_{0} \rangle=&\int_{\Omega}A(x,Dw)\cdot(Dw-Dw_{0}) \ \dx\nonumber \\
\ge &c_2\int_{\Omega}\vp(x,\snr{Dw}) \ \dx -2\nr{Dw}_{L^{\vp(\cdot)(\Omega)}}\nr{Dw_0}_{L^{\vp(\cdot)(\Omega)}}.
\end{flalign*}
From \eqref{nr3} we have
\begin{flalign*}
\int_{\Omega}\vp(x,Dw) \ \dx\ge {b_{1}}\min\left\{\nr{Dw}_{L^{\vp(\cdot)}(\Omega)}^{p},\nr{Dw}_{L^{\vp(\cdot)}(\Omega)}^{q}\right\},
\end{flalign*} 
therefore, merging the content of the two previous displays we obtain
\begin{flalign}\label{66}
\frac{\langle \mathcal{A}_{\vp(\cdot)}w,w-w_{0}\rangle}{\nr{Dw}_{L^{\vp(\cdot)}(\Omega)}}\ge& {c_2 b_{1}} \min\left\{\nr{Dw}_{L^{\vp(\cdot)}(\Omega)}^{p-1},\nr{Dw}_{L^{\vp(\cdot)}(\Omega)}^{q-1}\right\}-2{\nr{Dw_0}_{L^{\vp(\cdot)}(\Omega)}}\to\infty
\end{flalign}
as $\nr{Dw}_{L^{\vp(\cdot)}(\Omega)}\to \infty$.  
\end{proof}

Having the above, we are in the position to prove well--posedness of the obstacle problem.

\begin{proof}[Proof of Proposition~\ref{prop:ex-uni}]  
To get existence we apply Proposition~\ref{exist}. Let us verify its assumptions. We have an operator $\mathcal{A}_{\vp(\cdot)}$ defined on a reflexive Banach space $W^{1,\vp(\cdot)}(\Omega)$, which due to Lemma~\ref{lem:A-vp-weak-cont} is weakly continuous and because of~\eqref{A-monotone} is monotone. According to Remark~\ref{rem:op} $\mathcal{A}_{\vp(\cdot)}(W^{1,\vp(\cdot)}(\Omega))\subset (W^{1,\vp(\cdot)}(\Omega))^{*}$, Lemma~\ref{lem:K-is-conv} provides that $\mathcal{K}_{\psi,g}(\Omega)\subset W^{1,\vp(\cdot)}(\Omega)$  is closed and convex. On the other hand, \eqref{nr1} implies that 
\begin{flalign*}
\mathcal{K}^{\Lambda}_{\psi,g}(\Omega)\subset \left\{w\in W^{1,\vp(\cdot)}(\Omega)\colon \nr{w}_{W^{1,\vp(\cdot)}(\Omega)}\le \bar{c}=\bar{c}(n,p,q,\Lambda,\nr{g}_{W^{1,\vp(\cdot)}(\Omega)},\diam(\Omega))\right\},
\end{flalign*}
thus $\mathcal{K}_{\psi,g}^{\Lambda}(\Omega)$ is bounded (and, of course, closed and convex) in $W^{1,\vp(\cdot)}(\Omega)$. Therefore, Proposition \ref{exist} yields that there exists a solution to problem \eqref{obs}, which due to Lemma~\ref{lem:uniq} is unique.
 \end{proof}

\subsection{H\"older regularity of the obstacle problem}

In this subsection we provide proofs for regularity of the solution to an obstacle problem. We start this process by showing that our solution satisfies an intrinsic Caccioppoli inequality.

\begin{proposition}
Suppose $A$ satisfies \eqref{A-Car}--\eqref{A-monotone} with $\vp\in \Phi_c(\Omega)$ satisfying (aDec)$_q$ for some $1< q < \infty$. If $B_r \Subset B_R \subset \Omega$ and  $v$ is a solution to the obstacle problem  to \eqref{obs}
\begin{align*}
\int_{\{v \geq k\} \cap B_R} \vp(x, |D(v-k)_+ |) \ \dx \leq c \int_{\{v \geq k\} \cap B_R} \vp\left (x,\dfrac{(v-k)_+}{R-r}\right ) \ \dx
\end{align*}
where $k \geq \sup_{x \in B_R} \psi(x)$ and $A(k,r)=\{v \geq k\} \cap B_R$.
\end{proposition}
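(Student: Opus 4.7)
The plan is to proceed by the classical testing-and-absorption argument, adapted to the $\Phi$-function framework. First I would fix a cutoff $\eta\in C^\infty_c(B_R)$ with $\eta\equiv 1$ on $B_r$, $0\le\eta\le 1$, and $|D\eta|\le 2/(R-r)$, and define the test function
\[
w:=v-\eta^{q}(v-k)_{+}.
\]
I would check that $w\in\mathcal{K}_{\psi,g}(\Omega)$: since $\eta$ is compactly supported in $B_R\Subset\Omega$, $w-g$ differs from $v-g$ by an element of $W_0^{1,\vp(\cdot)}(\Omega)$, while on $\{v\ge k\}$ the choice of $k$ gives $w\ge v-(v-k)=k\ge\sup_{B_R}\psi\ge\psi$, and on $\{v<k\}$ we have $w=v\ge\psi$. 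Thus $w$ is admissible.

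Second, I would insert $w$ into the variational inequality \eqref{obs}. Computing
\[
D(w-v)=-q\eta^{q-1}(v-k)_{+}D\eta-\eta^{q}D(v-k)_{+}
\]
and noting $D(v-k)_{+}=Dv\,\chi_{\{v\ge k\}}$, the inequality $\int A(x,Dv)\cdot D(w-v)\,\dx\ge 0$ rearranges to
\[
\int_{\{v\ge k\}}\eta^{q}\,A(x,Dv)\cdot Dv\,\dx\;\le\;q\int_{\{v\ge k\}}\eta^{q-1}(v-k)_{+}|A(x,Dv)|\,|D\eta|\,\dx.
\]
I would then invoke the coercivity $A(x,z)\cdot z\ge c_{2}\vp(x,|z|)$ on the left and the growth $|A(x,z)|\le c_{1}\vp(x,|z|)/|z|$ on the right.

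Third, I would handle the mixed term on the right with an $\varepsilon$-Young's inequality for $\Phi$-functions applied to $a=\eta^{q-1}\vp(x,|Dv|)/|Dv|$ and $b=(v-k)_{+}|D\eta|$. Using the duality bound $\vp^{*}(x,\vp(x,s)/s)\le \vp(x,s)$ together with the scaling $\vp^{*}(x,\lambda a)\le c\lambda^{q'}\vp^{*}(x,a)$ for $\lambda\le 1$ (which follows because (aDec)$_q$ on $\vp$ gives (aInc)$_{q'}$ on $\vp^{*}$, and $(q-1)q'=q$), one obtains
\[
\eta^{q-1}(v-k)_{+}|D\eta|\frac{\vp(x,|Dv|)}{|Dv|}\;\le\;\varepsilon\,\eta^{q}\vp(x,|Dv|)+C(\varepsilon)\,\vp\bigl(x,|D\eta|(v-k)_{+}\bigr).
\]
Choosing $\varepsilon$ small enough to absorb the first term into the coercive left-hand side and using $|D\eta|\le 2/(R-r)$ together with doubling of $\vp$ in the second term yields, on $B_r$ where $\eta=1$, the claimed inequality (the statement appears to have a minor typographical issue: the left-hand integral is naturally over $B_r$, not $B_R$).

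The main obstacle is the scaled Young step: one must pull out exactly the factor $\eta^{q}$ so that it matches the coercive left-hand side, and this is precisely where the exponent $q$ in the cutoff (rather than $2$, as in the classical $p$-Laplace proof) and the (aDec)$_q$ hypothesis enter. The remaining ingredients — admissibility of $w$ and the bookkeeping of constants via doubling — are routine in this framework.
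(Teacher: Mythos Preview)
Your argument is correct. The paper takes a different route: it first derives the generic bound $\int_{B_R}\vp(x,|Dv|)\,\dx\le c\int_{B_R}\vp(x,|Dw|)\,\dx$ for admissible competitors $w$ (via growth, Young's inequality and \eqref{ex3}), then substitutes the \emph{linear} test function $w=v-\eta(v-k)_+$ and defers the remainder to \cite[Lemma~4.3]{hahato}, where the residual $(1-\eta)|Dv|$ term is removed by the standard hole-filling iteration over intermediate radii (cf.\ Lemma~\ref{lem:iter}). Your approach instead builds the power $\eta^q$ into the test function from the outset and uses the identity $(q-1)q'=q$ together with $\text{(aInc)}_{q'}$ for $\vp^*$ to absorb the gradient term in a single step, with no iteration. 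This is in fact the same mechanism the paper employs later in Lemma~\ref{lem-cacc} for supersolutions. The trade-off: the paper's route emphasizes the reduction to a quasiminimizer inequality and reuses a known lemma; yours is self-contained and avoids the iteration. Your remark that the left-hand integral should naturally be over $B_r$ is correct.
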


\begin{proof} As $v$ is a solution to the obstacle problem also locally (by choosing a proper test function involving a cut-off function) we can test the equation with any $w\in \mathcal{K}_{\psi,g}(B_R)$ and have
\begin{align*}
\int_{\Omega} A(x, Dv) \cdot Dv \ \dx \leq \int_{B_R}  A(x, Dv) \cdot D w \ \dx.
\end{align*}
Using \eqref{A} on the left-hand side and \eqref{ex3} on the right-hand side to get
\begin{align*}
\int_{B_R} \vp(x, |Dv|) \ \ \dx &\leq \dfrac{c_1}{c_2} \int_{B_R} \dfrac{\vp(x, |Dv|)}{|Dv|} |Dw| \ \dx\\
& \leq \dfrac{1}{2}\int_{B_R} \vp^{\ast}\left (x,\dfrac{\vp(x, |Dv|)}{|Dv|}\right ) \ \dx + c \int_{B_R}\vp (x, |Dw|) \ \dx \\
&\leq \dfrac{1}{2}\int_{B_R} \vp(x, |Dv|) \ \dx + c\int_{B_R} \vp(x, |Dw|) \ \dx.
\end{align*}
Absorbing the first term on the right-hand side to the left-hand side we have
\begin{align*}
\int_{B_R} \vp(x, |Dv|) \ \ \dx \leq c\int_{B_R} \vp(x, |Dw|) \ \dx.
\end{align*}
We choose $w:= u-\eta(u-k)_+$, where $\eta \in C_c^{\infty}(B_R)$ is a standard cut-off function with $\eta=1$ in $B_r$ and $|D\eta| \leq \frac{2}{R-r}$. We note that $w\geq \psi$ by the assumption on $k$. With these choices, the proof follows exactly the same lines as in \cite[Lemma 4.3]{hahato}.
\end{proof}

Next step of proving regularity is to show that for bounded obstacles the solution is also bounded. The proof can be found in \cite{kale}. 

\begin{proposition}[\cite{kale}]\label{prop:v-bounded}
Suppose $\vp \in \Phi_c(\Omega)$ satisfies (A0), (A1), (aInc)$_{p}$ and (aDec)$_{q}$ for some $1 < p \leq q <\infty$. Under assumptions \eqref{A-Car}--\eqref{A-monotone} let $\psi,g\in W^{1,\vp(\cdot)}(\Omega)$ be such that $\mathcal{K}_{\psi,g}(\Omega)\not =\emptyset$ and let $v$ be a solution to the obstacle problem~\eqref{obs} such that $\rho_{{\vp(\cdot)};B_{2\rr}}(|D v|) \le 1$. Then 
if $\psi \in W^{1,\vp(\cdot)}(\Omega)\cap L^{\infty}(\Omega)$ and {$\sigma \in [\frac{1}{2}, 1)$},
then $v\in L^{\infty}_{\mathrm{loc}}(\Omega)$ and% , for any open set $\tilde{\Omega}\Subset \Omega$,
\begin{align}\label{v-bounded}
\eup_{B_{\sigma \rr}} (v-\ell)_+ \leq C (1-\sigma)^{-4nq^2}\left[  \left(\mint_{B_{\rr}} (v-\ell)_+^{q} \, \dx \right )^{1/q} + \left|(v-\ell)_{B_{\rr/2}}\right| + \rr \right]
\end{align}
for  any $B_{2\rr}\subset \Omega$
and $\ell \geq \sup_{B_{2\rr}} \psi$. The term $|(v-\ell)_{B_{\rr/2}}|$ can be omitted if $(v(x)-\ell)\geq 0$ for every $x\in B_{\rr/2}$.
\end{proposition}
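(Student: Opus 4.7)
The plan is to run a De Giorgi iteration on a shrinking sequence of balls with an increasing sequence of truncation levels, powered by the intrinsic Caccioppoli inequality just established. Fix $B_{2\rr}\subset\Omega$ and $\ell\ge\sup_{B_{2\rr}}\psi$, and set $\rr_i:=\sigma\rr+(1-\sigma)\rr\,2^{-i}$ together with $\ell_i:=\ell+(1-2^{-i})M$, where $M>0$ is a threshold to be fixed at the end of the iteration. Because $\ell_i\ge\ell\ge\sup\psi$, each truncation $(v-\ell_{i+1})_+$ is admissible in the previous Caccioppoli inequality, so on concentric balls $B_{\rr_{i+1}}\Subset B_{\rr_i}$ one obtains
\begin{equation*}
\int_{B_{\rr_{i+1}}}\vp\bigl(x,|D(v-\ell_{i+1})_+|\bigr)\,\dx\le c\int_{B_{\rr_i}}\vp\!\left(x,\tfrac{(v-\ell_{i+1})_+}{\rr_i-\rr_{i+1}}\right)\,\dx,
\end{equation*}
with $\rr_i-\rr_{i+1}\simeq(1-\sigma)\rr\,2^{-i}$.

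The next step is to convert this modular Caccioppoli estimate into an iterable inequality for the quantities $Y_i:=\mint_{B_{\rr_i}}(v-\ell_i)_+^q\,\dx$. I would apply a Sobolev--Poincar\'e inequality in $W^{1,\vp(\cdot)}$ to $\eta(v-\ell_{i+1})_+$, where $\eta\in C^\infty_c(B_{\rr_i})$ is a standard cut-off with $\eta\equiv 1$ on $B_{\rr_{i+1}}$; this gains a factor of $|A_{i+1}|^{\delta}$ for some $\delta>0$, where $A_{i+1}:=\{v>\ell_{i+1}\}\cap B_{\rr_{i+1}}$. The hypothesis $\rho_{\vp(\cdot);B_{2\rr}}(|Dv|)\le 1$ places us in the regime where modulars control powers of Luxemburg norms via \eqref{nr1}--\eqref{nr3}, and \eqref{ex3} together with $\ainc$ and $\adec$ allows passage between $\vp(x,\cdot)$ and power functions. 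The pointwise lower bound $\ell_{i+1}-\ell_i=M2^{-i-1}$ on $A_{i+1}$ gives $|A_{i+1}|\le c\,2^{qi}M^{-q}|B_{\rr_i}|\,Y_i$ by Chebyshev, and combining everything yields a recursion of the form
\begin{equation*}
Y_{i+1}\le c\,b^{\,i}\,Y_i^{1+\kappa}
\end{equation*}
with geometric $b$ depending on $(1-\sigma)^{-1}$ and some $\kappa>0$. The classical De Giorgi iteration lemma then forces $Y_i\to 0$ provided $Y_0$ is smaller than a power of $(c\,b^{1/\kappa})^{-1}$, which fixes $M$ proportionally to the bracket on the right-hand side of \eqref{v-bounded} and gives the claim.

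The principal technical obstacle is the spatial inhomogeneity of $\vp(x,\cdot)$: Sobolev embedding constants in $W^{1,\vp(\cdot)}$ depend on the oscillation of $\vp$ in $x$, which in general need not be controlled. This is precisely what the (A1-$n$) inequality \eqref{A1-n} addresses, letting one replace $\vp(x,s)$ by $\vp_B^-(s)$ up to a constant on balls of radius less than one, for arguments $s\le 1/\rr(B)$. One then has to verify that the scales $(v-\ell_i)_+/(\rr_i-\rr_{i+1})$ stay in the admissible range throughout the iteration; this bookkeeping, together with the geometric loss from the modular--to--norm conversion \eqref{nr1}--\eqref{nr3} at each step (which brings in powers of $p$ and $q$), is what produces the final factor $(1-\sigma)^{-4nq^2}$. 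The auxiliary term $|(v-\ell)_{B_{\rr/2}}|+\rr$ arises from the standard trick of centering the truncation when $(v-\ell)$ is not a priori nonnegative on $B_{\rr/2}$, and is correctly removable exactly when $v-\ell\ge 0$ there.
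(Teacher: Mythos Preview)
The paper does not actually prove this proposition: it is quoted from \cite{kale} with the explicit remark ``The proof can be found in \cite{kale}.'' So there is no in-paper proof to compare against.

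That said, your outline is the standard De~Giorgi iteration adapted to the generalized Orlicz setting, and it is essentially the strategy carried out in \cite{hahato} for quasiminimizers and in \cite{kale} for the obstacle problem. The key ingredients you name --- the Caccioppoli inequality at levels $\ell_i\ge\sup_{B_{2\rr}}\psi$, the Sobolev--Poincar\'e step producing the gain $|A_{i+1}|^\delta$, the use of \eqref{A1-n} to freeze the $x$-dependence of $\vp$, and the Chebyshev bound on $|A_{i+1}|$ --- are exactly the right ones, and the conclusion via the geometric recursion $Y_{i+1}\le c\,b^i Y_i^{1+\kappa}$ is the standard endgame.

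One point where your sketch is vague and where the actual work lies: the Sobolev--Poincar\'e inequality in $W^{1,\vp(\cdot)}$ is not applied to the Luxemburg norm directly but rather through an intrinsic version (see \cite[Lemma~4.4]{hahato} or \cite[Theorem~6.3.3]{hahab}) that already incorporates the (A1) condition and the modular bound $\rho_{\vp(\cdot);B_{2\rr}}(|Dv|)\le 1$; the centering term $|(v-\ell)_{B_{\rr/2}}|$ enters precisely here, not as an afterthought. Also, the admissible range for \eqref{A1-n} is $s\in[1,1/\rr(B)]$, and verifying that the relevant arguments stay in this window requires more than bookkeeping --- one typically splits the integrand according to whether $(v-\ell_{i+1})_+/(\rr_i-\rr_{i+1})$ exceeds $1$ and handles the small-argument piece separately using (A0). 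These are the places where a full proof would need genuine detail beyond what you have written, but the architecture is correct.
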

\noindent Note that above $v$ is the solution in $\Omega$ and thus  $\rho_{{\vp(\cdot)};\Omega}(|D v|) <\infty$. The modular condition is stated for some ball $B_{2\rr}$, which can always be satisfied as long as the radius is chosen small enough due to absolute continuity of the integral.

\medskip

As solution to an obstacle problem is always a superminimizer, following the same lines as in \cite{hahale} we have the following weak Harnack inequality.

\begin{proposition}
\label{prop:inf-estimate}
Let $\vp\in\Phi_c(\Omega)$ 
satisfy (A0), (A1), (aInc)$_p$ and (aDec)$_q$ for some $1<p \leq q < \infty$.
Under assumptions \eqref{A-Car}--\eqref{A-monotone} let $\psi,g\in W^{1,\vp(\cdot)}(\Omega)$ be such that $\mathcal{K}_{\psi,g}(\Omega)\not =\emptyset$ and let $v$ be a {non-negative} solution to the obstacle problem  to \eqref{obs} such that $\rho_{{\vp(\cdot)};B_{2\rr}}(|D v|) \le 1$.
Then there exists $h_0>0$ such that for every $B_{2\rr}\subset \Omega$ we have
\begin{align}\label{32}
 \left( \mint_{B_\rr} v^{h_0}\,dx\right)^\frac1{h_0}
 \le c \left( 
 \eif_{B_{\rr/2}} v +  \rr\right).
\end{align}
\end{proposition}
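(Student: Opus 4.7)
The plan is to follow a standard Moser iteration scheme adapted to the Musielak--Orlicz setting, mimicking the argument developed in~\cite{hahale} for superminimizers of energies with generalized Orlicz growth. The starting point, as remarked just before~\eqref{sux}, is that any solution $v\in\mathcal{K}_{\psi,g}(\Omega)$ of the obstacle problem is in particular a supersolution of~\eqref{A0}: testing~\eqref{obs} against $v+\tilde w$ for an arbitrary non-negative $\tilde w\in W^{1,\vp(\cdot)}_0(\Omega)$ produces~\eqref{sux}. I would therefore work throughout with the shifted function $v_\rr:=v+\rr$ so that negative powers and logarithms are well-defined and carry the correct scaling.

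First I would establish a Caccioppoli-type estimate for negative powers of $v_\rr$. Testing \eqref{sux} with $\tilde w=\eta^q v_\rr^{-t}$ for $t>0$ and $\eta\in C^\infty_c(B_R)$ with $|D\eta|\le c/(R-r)$, and using~\eqref{A} together with Young's inequality for the conjugate pair $(\vp,\vp^*)$ and~\eqref{ex3}, one arrives at a bound of the shape
\begin{align*}
\int_{B_r}\eta^q\,\vp\bigl(x,|Dv_\rr|\bigr) v_\rr^{-t-1}\,\dx\le c\int_{B_R}\vp(x,|D\eta|\,v_\rr)\,v_\rr^{-t-1}\,\dx + \text{l.o.t.}
\end{align*}
For the borderline case $t=0$ this yields control on the modular of $D\log v_\rr$, and combined with the Sobolev--Poincar\'e inequality in $W^{1,\vp(\cdot)}$ (available since $\vp$ satisfies (A0), (A1), $\ainc$, $\adec$) this places $\log v_\rr$ in a local BMO-type class. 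A John--Nirenberg lemma adapted to this setting then yields the existence of some $h_0>0$ such that
\begin{align*}
\left(\mint_{B_\rr}v_\rr^{h_0}\,\dx\right)^{1/h_0}\le c\left(\mint_{B_\rr}v_\rr^{-h_0}\,\dx\right)^{-1/h_0}.
\end{align*}
For non-critical exponents $t\ne 0$, the Caccioppoli estimate above combined with the Sobolev embedding yields a reverse H\"older chain for $v_\rr^{-1}$; Moser iteration over a family of shrinking balls then produces
\begin{align*}
\left(\mint_{B_\rr}v_\rr^{-h_0}\,\dx\right)^{-1/h_0}\le c\,\eif_{B_{\rr/2}}v_\rr = c\bigl(\eif_{B_{\rr/2}}v+\rr\bigr).
\end{align*}
Chaining the two displays and using $v\le v_\rr$ gives~\eqref{32}.

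The main obstacle will be controlling the inhomogeneity of $\vp(x,\cdot)$ during the iteration. Unlike the $p$-Laplace case, the reverse H\"older estimates at each step pick up non scale-invariant error terms that need to be absorbed using the balance conditions (A0)--(A1); concretely, one applies~\eqref{A1-n} to compare $\vp^+_B$ with $\vp^-_B$ on the admissible range $[1,1/r(B)]$, and the modular hypothesis $\rho_{\vp(\cdot);B_{2\rr}}(|Dv|)\le 1$ is precisely what keeps the iterated quantities in that range so that modular and norm remain comparable via~\eqref{nr1}--\eqref{nr3}. The additive term $\rr$ on the right-hand side reflects the non-homogeneity of the problem and is inherited from the shift $v_\rr=v+\rr$.
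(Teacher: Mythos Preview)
Your approach is correct and matches the paper's: the paper does not give a detailed proof of this proposition but simply observes that a solution to the obstacle problem is a superminimizer (equivalently, a supersolution via~\eqref{sux}) and defers entirely to~\cite{hahale}, which is precisely the Moser-type scheme you outline. One minor imprecision: your ``borderline case $t=0$'' does not actually produce the $\log v_\rr$ estimate, since with $\tilde w=\eta^q v_\rr^{-t}$ the coercive term on the left carries a factor of $t$ and vanishes at $t=0$; the logarithmic Caccioppoli estimate comes instead from testing with $\tilde w=\eta^q v_\rr^{-1}$ (i.e.\ $t=1$ in your notation, or a separate test function), but this does not affect the overall strategy.
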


These results altogether imply the H\"older continuity of $v$ {using a similar reasoning as in the proof of Lemma \ref{osc}.}

\begin{proposition}[\cite{kale}]\label{prop:hold}
Suppose $\vp \in \Phi_c(\Omega)$ satisfies (A0), (A1), (aInc)$_p$ and (aDec)$_q$ for some $1<p \leq q < \infty$. Under assumptions \eqref{A-Car}--\eqref{A-monotone} let $\psi,g\in W^{1,\vp(\cdot)}(\Omega)$ be such that $\mathcal{K}_{\psi,g}(\Omega)\not =\emptyset$. If $\psi \in W^{1,\vp(\cdot)}(\Omega)\cap C^{0,\beta_{0}}(\Omega)$ for some $\beta_{0}\in (0,1]$ and $v$ is a solution to the obstacle problem  to \eqref{obs} such that $\rho_{{\vp(\cdot)};B_{2\rr}}(|D v|) \le 1$, then $v\in C^{0,\beta_{0}}_{\mathrm{loc}}(\Omega)$ and, for all open sets $\tilde{\Omega}\Subset \Omega$, there holds
\begin{align}\label{v-Holder}
[v]_{0,\beta_{0};\tilde{\Omega}}\le c(\data,\nr{\vp(\cdot,\snr{Dv})}_{L^{1}(\Omega)},\nr{\psi}_{L^{\infty}(\Omega)},[\psi]_{0,\beta_{0}}).
\end{align}
\end{proposition}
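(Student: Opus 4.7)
The target is a quantitative Hölder estimate with the same exponent $\beta_0$ as that of the obstacle. The plan is to establish an oscillation decay of the form
\begin{equation}\label{eq:osc-decay-plan}
\osc_{B_{\rr/4}} v \le \gamma \, \osc_{B_{2\rr}} v + C\,[\psi]_{0,\beta_0}\,\rr^{\beta_0},
\end{equation}
with some universal $\gamma=\gamma(\data)\in(0,1)$, valid for every ball $B_{2\rr}\Subset\tilde\Omega$ small enough so that the modular smallness $\rho_{\vp(\cdot);B_{2\rr}}(|Dv|)\le 1$ required by Propositions \ref{prop:v-bounded} and \ref{prop:inf-estimate} holds; such a radius exists by absolute continuity of the integral and the integrability $\vp(\cdot,|Dv|)\in L^1(\Omega)$. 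Once \eqref{eq:osc-decay-plan} is available, a standard iteration lemma (of the type in Giaquinta--Martinazzi, Lemma 7.3) converts it into $\osc_{B_{\rr}} v\le c\,\rr^{\beta_0}$ and hence into the Hölder bound \eqref{v-Holder}.

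To prove \eqref{eq:osc-decay-plan}, fix $B_{2\rr}\Subset\tilde\Omega$ and set $M:=\sup_{B_{2\rr}} v$, $m:=\inf_{B_{2\rr}} v$, $\omega:=M-m$, and $\psi^+:=\sup_{B_{2\rr}}\psi$. I split into two regimes according to how deeply the obstacle sits in the range of $v$. \emph{Case A:} $\psi^+\le m+\omega/2$. Then $\ell:=m+\omega/2$ is admissible for the sup estimate, since $\ell\ge \psi^+\ge\sup_{B_{2\rr}}\psi$. Applying Proposition \ref{prop:v-bounded} to $(v-\ell)_+$ and, in parallel, applying the weak Harnack inequality of Proposition \ref{prop:inf-estimate} to the non-negative supersolution $v-m$, one obtains a standard De Giorgi alternative on the sublevel and superlevel sets of $v$ at height $(M+m)/2$. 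Testing the obstacle inequality \eqref{obs} with truncations at the intermediate level and using the intrinsic Caccioppoli inequality proved above yields an oscillation reduction $\osc_{B_{\rr/4}} v\le \gamma\,\omega$ for some $\gamma=\gamma(\data)<1$; this is where the ``similar reasoning as in the proof of Lemma \ref{osc}'' alluded to in the statement enters. \emph{Case B:} $\psi^+> m+\omega/2$. Then because $v\ge\psi$ in $B_{2\rr}$,
\[
m\ \ge\ \inf_{B_{2\rr}}\psi\ \ge\ \psi^+-[\psi]_{0,\beta_0}(2\rr)^{\beta_0}\ >\ m+\tfrac{\omega}{2}-[\psi]_{0,\beta_0}(2\rr)^{\beta_0},
\]
so $\omega\le 2\,[\psi]_{0,\beta_0}(2\rr)^{\beta_0}$ and \eqref{eq:osc-decay-plan} is trivially satisfied on any subball.

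Finally, in each of the two cases one has to iterate on a dyadic sequence of radii and patch together the dependence on the starting quantities. The constants produced by Propositions \ref{prop:v-bounded}--\ref{prop:inf-estimate} depend only on \data, on $\|\vp(\cdot,|Dv|)\|_{L^1(\Omega)}$ through the smallness $\rho_{\vp(\cdot);B_{2\rr}}(|Dv|)\le 1$, and on $\|\psi\|_{L^\infty}$, so these are precisely the quantities appearing in~\eqref{v-Holder}.

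The main obstacle is the execution of the De Giorgi alternative in Case A within the generalized Orlicz framework: the exponent $h_0$ produced by the weak Harnack inequality need not coincide with $p$ or $q$, and the Caccioppoli inequality is stated in \emph{modular} form, so the passage from ``measure of the bad set is small'' to ``pointwise oscillation decay'' has to be carried out using \eqref{nr1}--\eqref{nr3} and the doubling of $\vp$ rather than plain power-type manipulations. Once that iteration is set up as in the reference \cite{kale}, the two cases above combine to yield \eqref{eq:osc-decay-plan} and hence the proposition.
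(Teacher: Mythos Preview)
The paper does not supply a proof here; it cites \cite{kale} and remarks that ``these results altogether imply the H\"older continuity of $v$ using a similar reasoning as in the proof of Lemma~\ref{osc}''. So the relevant comparison is between your plan and the scheme of Lemma~\ref{osc}.

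Your outline contains a genuine gap at the iteration step. From an oscillation decay of the form
\[
\osc_{B_{\rr/4}} v \le \gamma \, \osc_{B_{2\rr}} v + C\,\rr^{\beta_0},\qquad \gamma=\gamma(\data)\in(0,1),
\]
the standard iteration lemma yields only $\osc_{B_\rr} v\le c\,\rr^{\min(\beta_0,\alpha)}$ with $\alpha=-\log\gamma/\log 8$, \emph{not} $\osc_{B_\rr} v\le c\,\rr^{\beta_0}$ as you claim. Since your $\gamma$ in Case~A comes from a De Giorgi alternative and depends only on \data, you have no mechanism forcing $\alpha\ge\beta_0$; for $\beta_0$ close to $1$ the output exponent is strictly smaller. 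Thus your plan establishes $v\in C^{0,\min(\beta_0,\alpha)}_{\mathrm{loc}}$, not the stated $C^{0,\beta_0}_{\mathrm{loc}}$.

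The route actually taken in Lemma~\ref{osc} is different, and this difference is exactly what preserves the sharp exponent near the contact set. There one does \emph{not} aim at a multiplicative reduction. Instead, on a ball touching the contact set at some $x_0$ with $v(x_0)=\psi(x_0)$, one chains Proposition~\ref{prop:v-bounded} (the sup estimate at a level $\ell\ge\sup_B\psi$, upgraded to an arbitrary small exponent via Lemma~\ref{lem:iter}) directly into Proposition~\ref{prop:inf-estimate} (the weak Harnack applied to $v-\inf v\ge 0$) through a common integral average, and then closes the chain using the contact identity $v(x_0)=\psi(x_0)$ to replace $\inf v$ by quantities controlled by $\psi$. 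The output is the \emph{direct} bound
\[
\osc_{B_{4\rr}(x_0)} v \ \le\ c\,\osc_{B_{16\rr}(x_0)}\psi + c\,\rr\ \le\ c\,\rr^{\beta_0},
\]
with no $\gamma<1$ to iterate. Your Case~B correctly captures the contribution of $\osc\psi$, but your Case~A throws away the contact-point information and trades the direct bound for a De Giorgi reduction, which is precisely what costs you the exponent. If you want to follow the paper's hint, replace the De Giorgi alternative in Case~A by the sup--average--weak-Harnack chain of Lemma~\ref{osc}.
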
 

\begin{proof}[Proof of Theorem~\ref{T4}] It suffices to recall Propositions~\ref{prop:ex-uni} and \ref{prop:hold} to get the final claim.
\end{proof}

\section{Removable sets}\label{sec:rem}
In this last section we prove our main result, i.e. Theorem \ref{T6}. 
\subsection{Auxiliary results}
To begin we show a Caccioppoli-type inequality for non-negative supersolutions to \eqref{A0}.
\begin{lemma}\label{lem-cacc}
Let $\vp \in \Phi_c$ satisfy (A0), (A1), (aInc)$_p$ and (aDec)$_q$ with some $1<p\leq q<\infty$. Under assumptions \eqref{A-Car}--\eqref{A-monotone}, let $B_{\rr}\Subset \Omega$ be any ball, $\tilde{v}\in W^{1,\vp(\cdot)}(\Omega)$ a supersolution to \eqref{A0}, non-negative in $B_{\rr}$ and $\eta\in C^{1}_{c}(B_{\rr})$. Then for all $\gamma\in(1,p)$ there holds
\begin{flalign*}
\int_{B_{\rr}}\tilde{v}^{-\gamma}\eta^{q}\vp(x,\snr{D\tilde{v}}) \ \dx\le c\int_{B_{\rr}}\tilde{v}^{-\gamma}\vp(x,\snr{D\eta}\tilde{v}) \ \dx,
\end{flalign*}
with $c=c(c_1,c_2,p,q,\gamma)$.
\end{lemma}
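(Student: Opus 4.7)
The plan is to insert into the supersolution inequality \eqref{sux} the test function $w=\eta^{q}(\tilde{v}+\epsilon)^{1-\gamma}$ with $\epsilon>0$, and send $\epsilon\to 0^{+}$ at the end. This $w$ is non-negative, compactly supported in $B_{\rr}$, and lies in $W^{1,\vp(\cdot)}_{0}(\Omega)$ since $(\tilde{v}+\epsilon)^{1-\gamma}\le\epsilon^{1-\gamma}$ is bounded and $\snr{Dw}$ is controlled pointwise by a bounded multiple of $\snr{D\tilde{v}}+\snr{D\eta}$. Differentiating, applying the two parts of \eqref{A} to the two resulting integrands, and using $\gamma>1$ to move the good term to the left, one arrives at
\begin{equation*}
(\gamma-1)c_{2}\int_{B_{\rr}}\eta^{q}(\tilde{v}+\epsilon)^{-\gamma}\vp(x,\snr{D\tilde{v}})\,\dx\le qc_{1}\int_{B_{\rr}}\eta^{q-1}(\tilde{v}+\epsilon)^{-\gamma}\,\frac{\vp(x,\snr{D\tilde{v}})}{\snr{D\tilde{v}}}\,\snr{D\eta}\,(\tilde{v}+\epsilon)\,\dx.
\end{equation*}

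The next step addresses the mismatch between the $\eta^{q-1}$ on the right and the $\eta^{q}$ needed on the left. Apply Young's inequality with the Fenchel pair $(\vp,\vp^{*})$ to $a=\eta^{q-1}\vp(x,\snr{D\tilde{v}})/\snr{D\tilde{v}}$ and $b=\snr{D\eta}(\tilde{v}+\epsilon)$, scaled by a small $\delta\in(0,1]$. Since $\vp$ satisfies $\adec$, its conjugate $\vp^{*}$ satisfies $\text{(aInc)}_{q'}$; combined with $(q-1)q'=q$ and $\eta\le 1$ this gives
\[\vp^{*}\bigl(x,\delta\eta^{q-1}t\bigr)\le L\,\delta^{q'}\eta^{q}\vp^{*}(x,t)\quad\text{for every }t\ge 0.\]
Together with \eqref{ex3}, this bounds the ``bad'' term by $c\delta^{q'}\eta^{q}\vp(x,\snr{D\tilde{v}})$; the complementary term satisfies $\vp(x,\delta^{-1}b)\le c(\delta,q)\vp(x,\snr{D\eta}(\tilde{v}+\epsilon))$ by $\adec$. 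Choosing $\delta$ so small that $qc_{1}L\delta^{q'}\le\tfrac{1}{2}(\gamma-1)c_{2}$ and absorbing yields
\begin{equation*}
\int_{B_{\rr}}\eta^{q}(\tilde{v}+\epsilon)^{-\gamma}\vp(x,\snr{D\tilde{v}})\,\dx\le c\int_{B_{\rr}}(\tilde{v}+\epsilon)^{-\gamma}\vp\bigl(x,\snr{D\eta}(\tilde{v}+\epsilon)\bigr)\,\dx.
\end{equation*}

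The final task is to let $\epsilon\to 0^{+}$. The left-hand side is handled by Fatou's lemma. On the right, at points where $\tilde{v}>0$ the integrand is eventually dominated by $c\tilde{v}^{-\gamma}\vp(x,\snr{D\eta}(\tilde{v}+1))$, which is integrable by the doubling of $\vp$ and $\tilde{v}\in L^{\vp(\cdot)}$, so dominated convergence applies. On $\{\tilde{v}=0\}$ the integrand equals $\epsilon^{-\gamma}\vp(x,\epsilon\snr{D\eta})\le L_{p}\epsilon^{p-\gamma}\vp(x,\snr{D\eta})$ by $\ainc$ (for $\epsilon\le 1$), which vanishes in the limit precisely because $\gamma<p$. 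This vanishing of the null-set contribution is the only place where the upper restriction $\gamma<p$ is used, and together with the $\eta^{q-1}$-versus-$\eta^{q}$ matching of the previous paragraph constitutes the main technical point of the proof.
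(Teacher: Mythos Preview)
Your argument follows the same scheme as the paper's (test with $\eta^{q}\tilde v^{\,1-\gamma}$, use the two parts of \eqref{A}, Young's inequality with the pair $(\vp,\vp^{*})$ and the identity $(q-1)q'=q$, then absorb); the paper simply asserts, via a minimum--principle remark, that $\tilde v>0$ on $B_{\rr}$ and works directly with $w=\eta^{q}\tilde v^{-\tilde\gamma}$, whereas you regularize with $\tilde v+\epsilon$. That is a perfectly reasonable variant.

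There is, however, a gap in your passage to the limit on the right--hand side. The majorant you propose on $\{\tilde v>0\}$, namely $c\,\tilde v^{-\gamma}\vp(x,\snr{D\eta}(\tilde v+1))$, is not known to be integrable: near points where $\tilde v$ is small but positive, $\tilde v^{-\gamma}$ blows up while $\vp(x,\snr{D\eta}(\tilde v+1))$ stays of order $\vp(x,\snr{D\eta})$, and neither doubling of $\vp$ nor $\tilde v\in L^{\vp(\cdot)}$ controls this product. Consequently the dominated--convergence step, as written, is not justified.

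The fix is exactly the mechanism you already use on $\{\tilde v=0\}$, and it should be applied uniformly. Since $\gamma<p$ and $\ainc$ holds, the map $s\mapsto \vp(x,s)/s^{\gamma}$ is $L_{p}$--almost increasing; hence for every $\epsilon\in(0,1]$ and every $x\in B_{\rr}$,
\[
(\tilde v+\epsilon)^{-\gamma}\vp\bigl(x,\snr{D\eta}(\tilde v+\epsilon)\bigr)
\;\le\; L_{p}\,(\tilde v+1)^{-\gamma}\vp\bigl(x,\snr{D\eta}(\tilde v+1)\bigr)
\;\le\; L_{p}\,\vp\bigl(x,\snr{D\eta}(\tilde v+1)\bigr),
\]
and the last function \emph{is} integrable over $B_{\rr}$ by doubling. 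This single dominant covers both $\{\tilde v>0\}$ and $\{\tilde v=0\}$, and dominated convergence gives the claim. (Equivalently, split $B_{\rr}$ into $\{\tilde v\ge\epsilon\}$ and $\{\tilde v<\epsilon\}$ and estimate each piece directly.) In particular, your remark that ``the upper restriction $\gamma<p$ is used only on the null set'' undersells its role: it is precisely what makes the whole right--hand side behave well as $\epsilon\downarrow 0$.
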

\begin{proof}
Since $\tilde{v}$ is a non-negative supersolution to \eqref{A0},  by the comparison principle, either $\tilde{v}\equiv0$ a.e. on $B_{2\rr}$, or we can assume that $\tilde{v}$ is strictly positive in $B_{\rr}$. In the first scenario there is nothing interesting to prove, so we can look at the second one. For $\eta$ as in the statement, and any $\tilde{\gamma}>0$, we test \eqref{sux} against $w:=\eta^{q}\tilde{v}^{-\tilde{\gamma}}$ to obtain, with the help of $\eqref{A}_{1,2}$ and Young's inequality,
\begin{flalign}\label{41}
c_2 \tilde{\gamma}&\int_{B_{\rr}}\tilde{v}^{-\tilde{\gamma}-1}\eta^{q}\vp(x,\snr{D\tilde{v}}) \ \dx \le c_1 q\int_{B_{\rr}}\left(\frac{\vp(x,\snr{D\tilde{v}})}{\snr{D\tilde{v}}}\eta^{q-1}\snr{D\eta}\tilde{v}\right)\tilde{v}^{-\tilde{\gamma}-1} \ \dx\nonumber \\
& \le \frac{c_2 \tilde{\gamma}}{2}\int_{B_{\rr}}\tilde{v}^{-\tilde{\gamma}-1}\vp(x,\snr{D\tilde{v}})\eta^{q} \ \dx +\left(\frac{c}{c_2\tilde{\gamma}}\right)^{q-1}\int_{B_{\rr}}\vp(x,\snr{D\eta}\tilde{v}) \tilde{v}^{-\tilde{\gamma}-1} \ \dx,
\end{flalign}
for $c=c(c_1,p,q)$. Absorbing terms in the previous inequality and setting $\gamma:=\tilde{\gamma}+1$, we obtain the announced inequality.
\end{proof}

For our main result, we refine the previous Caccioppoli estimate to involve oscillation of the supersolution.
\begin{lemma} \label{lem-cacc-osc}
Let $\vp \in \Phi_c(\Omega)$ satisfy (A0), (A1),  (aInc)$_p$ and (aDec)$_q$ with some $1<p\leq q<\infty$. Under assumptions \eqref{A-Car}--\eqref{A-monotone}, let $B_{2\rr} \Subset \Omega$ be any ball and  $v \in W^{1,\vp(\cdot)}(\Omega)$ be a~supersolution to \eqref{A0}, which is non-negative in $B_{2\rr}$. 
Then
\begin{align*}
\int_{B_{\rr}} \vp(x,\snr{D v}) \ \dx \leq c \int_{B_{2\rr}} \vp \left(x, \frac{\osc_{x \in B_{2\rr}} v (x)}{\rr}\right) \ \dx,
\end{align*}
where $c=c(c_1,p,q)$.
\end{lemma}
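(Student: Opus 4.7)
The strategy is the standard oscillation-type Caccioppoli test. Set $M:=\eup_{B_{2\rr}} v$ (which we may assume finite, else the right-hand side is $+\infty$) and $m:=\eif_{B_{2\rr}} v$. Because $v\ge 0$ on $B_{2\rr}$, we have $m\ge 0$ and $0\le M-v\le \osc_{B_{2\rr}} v$ a.e.\ on $B_{2\rr}$. Choose $\eta\in C_c^{\infty}(B_{2\rr})$ with $0\le\eta\le 1$, $\eta\equiv 1$ on $B_\rr$ and $|D\eta|\le 2/\rr$. The function
\begin{flalign*}
w:=\eta^{q}(M-v)
\end{flalign*}
is non-negative and lies in $W^{1,\vp(\cdot)}_{0}(\Omega)$ since it is supported in $B_{2\rr}\Subset\Omega$, so it is admissible in the supersolution inequality \eqref{sux}. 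Expanding $Dw=q\eta^{q-1}(M-v)D\eta-\eta^{q}Dv$ and applying coercivity $\eqref{A}_{2}$ to the $Dv$-piece and growth $\eqref{A}_{1}$ to the $D\eta$-piece yields
\begin{flalign*}
c_{2}\int_{B_{2\rr}}\eta^{q}\vp(x,|Dv|)\,\dx\le c_{1}q\int_{B_{2\rr}}\eta^{q-1}(M-v)|D\eta|\,\frac{\vp(x,|Dv|)}{|Dv|}\,\dx.
\end{flalign*}

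The decisive step is to absorb the factor $\vp(x,|Dv|)$ on the right. For $\delta\in(0,1]$ I apply the $\Phi$-function Young inequality $ab\le \vp^{*}(x,\delta a)+\vp(x,b/\delta)$ with $a=\eta^{q-1}\vp(x,|Dv|)/|Dv|$ and $b=|D\eta|(M-v)$. Since $\vp\in\adec$ is equivalent to $\vp^{*}$ satisfying (aInc)$_{q'}$, and using the arithmetic identity $(q-1)q'=q$ with $\eta\le 1$, the function $\vp^{*}$ scales as
\begin{flalign*}
\vp^{*}\!\left(x,\,\delta\eta^{q-1}\,\tfrac{\vp(x,|Dv|)}{|Dv|}\right)\le c\,\delta^{q'}\eta^{q}\,\vp^{*}\!\left(x,\tfrac{\vp(x,|Dv|)}{|Dv|}\right)\le c\,\delta^{q'}\eta^{q}\,\vp(x,|Dv|),
\end{flalign*}
where the final step is \eqref{ex3}. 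Fixing $\delta$ so small that $c_{1}q c\delta^{q'}\le c_{2}/2$, the resulting term is absorbed into the left-hand side and one is left with
\begin{flalign*}
\int_{B_{2\rr}}\eta^{q}\vp(x,|Dv|)\,\dx\le c\int_{B_{2\rr}}\vp\!\left(x,\,\tfrac{|D\eta|(M-v)}{\delta}\right)\,\dx.
\end{flalign*}

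To finish, I use $|D\eta|\le 2/\rr$ and $M-v\le \osc_{B_{2\rr}}v$ to bound the argument of $\vp$ by $(2/\delta)\osc_{B_{2\rr}}v/\rr$, and then the doubling property $\adec$ swallows the universal constant $2/\delta$ into the ambient constant $c$. Restricting the left-hand integral to $B_\rr$, where $\eta\equiv 1$, yields the announced inequality with $c=c(c_{1},c_{2},p,q,L_{p},L_{q})$. The chief obstacle is precisely this scaling step: the Young inequality naturally produces the bad factor with $\eta^{q-1}$, whereas absorption requires $\eta^{q}$; the identity $(q-1)q'=q$ combined with (aInc)$_{q'}$ of $\vp^{*}$ is exactly what converts the stray $\eta^{q-1}$ into the needed $\eta^{q}$.
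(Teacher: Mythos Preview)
Your argument is correct, but it follows a genuinely different route from the paper. The paper does not test with $\eta^{q}(M-v)$; instead it invokes the preceding Lemma~\ref{lem-cacc}, which is proved by testing \eqref{sux} with the negative-power function $w=\eta^{q}\tilde v^{-\tilde\gamma}$ for $\tilde v:=v-\inf_{B_{2\rr}}v$ and $\tilde\gamma=\gamma-1\in(0,p-1)$. That lemma gives the weighted estimate
\[
\int_{B_{2\rr}}\tilde v^{-\gamma}\eta^{q}\vp(x,|D\tilde v|)\,\dx\le c\int_{B_{2\rr}}\tilde v^{-\gamma}\vp(x,|D\eta|\tilde v)\,\dx,
\]
and the paper then multiplies and divides by $\tilde v^{\gamma}$, bounds $\tilde v^{\gamma}\le(\osc_{B_{2\rr}}v)^{\gamma}$ on the left, and on the right uses $\ainc$ with $\gamma<p$ to replace $\tilde v$ by $\osc_{B_{2\rr}}v$ in the quotient $\vp(x,|D\eta|\tilde v)/\tilde v^{\gamma}$. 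So the structural hypothesis driving the paper's absorption is $\ainc$ (through the exponent $\gamma\in(1,p)$), whereas your absorption hinges on $\adec$ via $(\mathrm{aInc})_{q'}$ for $\vp^{*}$ and the identity $(q-1)q'=q$. Your approach is more self-contained---a single test function, no auxiliary weighted Caccioppoli lemma, and no need to argue strict positivity of $\tilde v$---while the paper's detour through Lemma~\ref{lem-cacc} has the merit of isolating a weighted inequality that is of independent use. Either way the final constant depends on the full list $(c_{1},c_{2},p,q,L_{p},L_{q})$; the statement's ``$c=c(c_{1},p,q)$'' is a slight understatement already in the paper's own proof.
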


\begin{proof}Let $\eta \in C_c^{1}(B_{2\rr})$ be a cut-off function such that \[\chi_{B_{\rr}} \leq \eta \leq \chi_{B_{2\rr}}\quad\text{ and }\quad |D\eta|\leq \frac{2}{\rr}.\]

 Let us apply Lemma \ref{lem-cacc} for $\tilde v := v - \inf_{x \in B_{2\rr}} v$ and get
\begin{align*} 
\int_{B_{\rr}} \vp(x, |Dv|) \ \dx &= \int_{B_{\rr}} \tilde v^{\gamma} \tilde v^{-\gamma} \vp(x, |D\tilde v|) \ \dx \\
&\leq c (\osc_{x \in B_{2\rr}} v (x))^{\gamma} \int_{B_{2\rr}} \dfrac{\vp(x,|D\eta|\tilde v)}{\tilde v^{\gamma}} \ \ \dx.
\end{align*}
Now since $\vp$ satisfies (aInc)$_{p}$ and $\gamma \in (1,p)$, we see that $\dfrac{\vp(x,|D\eta|\tilde v)}{\tilde v^{\gamma}} \leq L_p \dfrac{\vp(x,|D\eta|\osc_{x \in B_{2\rr}} v (x))}{(\osc_{x \in B_{2\rr}} v (x))^{\gamma}}$. Therefore
\begin{align*}
\int_{B_{2\rr}} \vp(x, |Dv|) \ \dx &\leq c (\osc_{x \in B_{2\rr}} v (x))^{\gamma} \int_{B_{2\rr}} (\osc_{x \in B_{2\rr}} v (x))^{-\gamma} \vp \left(x,|D\eta| \osc_{x \in B_{2\rr}} v (x)\right) \ \dx \\
&= c \int_{B_{2\rr}} \vp \left((x,|D\eta| \osc_{x \in B_{2\rr}} v (x) \right) \ \dx\ \leq c\int_{B_{2\rr}} \vp\left(x,2\frac{ \osc_{x \in B_{2\rr}} v (x)}{\rr}\right) \ \dx,
\end{align*}
where we can get the claim by using doubling properties of $\vp$.
\end{proof}

We will use also the following classical iteration lemma in the proof of Lemma \ref{osc}.
\begin{lemma}\cite[Lemma~4.2]{hahato}\label{lem:iter}
Let $Z$ be a bounded non-negative function in the interval $[r, R]$ $\subset \mathbb{R}$ and let $X$ be a doubling function in $[0, \infty)$. Assume that there exists $\alpha \in [0,1)$ such that
\begin{align*}
    Z(t) \leq X \left( \frac{1}{s-t}\right) + \alpha Z(s)\quad\text{ for all $\quad r\leq t < s \leq R$.}
\end{align*}
 Then
\begin{align*}
    Z(r) \leq c\, X \left( \frac{1}{R-r}\right),
\end{align*}
where $c$ is a constant that depends only on the doubling constants of $X$ and $\alpha$.
\end{lemma}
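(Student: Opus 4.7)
The plan is a standard absorption--iteration argument: iterate the hypothesis along a geometric sequence of points in $[r,R]$ and sum the resulting series using the doubling of $X$. First I would fix a parameter $\tau\in(0,1)$ (whose value will be pinned down at the end) and set $t_k := r + (1-\tau^k)(R-r)$, so that $t_0 = r$, $t_k \nearrow R$, and the spacings shrink geometrically, $t_{k+1}-t_k=(1-\tau)\tau^k(R-r)$.

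Next, applying the assumed inequality with $(t,s)=(t_k,t_{k+1})$ and unfolding the recursion $N$ times yields
\[
Z(r) \;\le\; \sum_{k=0}^{N-1}\alpha^{k}\,X\!\Bigl(\tfrac{1}{t_{k+1}-t_k}\Bigr) + \alpha^{N} Z(t_N).
\]
Boundedness of $Z$ on $[r,R]$ together with $\alpha\in[0,1)$ kills the remainder as $N\to\infty$, so only the infinite series has to be controlled. Iterating the doubling hypothesis $X(2t)\le C_X X(t)$ produces a polynomial-type bound $X(\lambda s)\le C\,\lambda^{\beta} X(s)$ for $\lambda\ge 1$, with $\beta=\log_2 C_X$; applied to $\lambda=\tau^{-k}$ and $s=((1-\tau)(R-r))^{-1}$, each term of the series is bounded by $C\,\tau^{-k\beta}\,X\!\bigl(\tfrac{1}{(1-\tau)(R-r)}\bigr)$.

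The only delicate point, and the obstacle I expect, is the calibration of $\tau$: the series that arises has ratio $\alpha\tau^{-\beta}$, and it converges precisely when $\tau\in(\alpha^{1/\beta},1)$. This interval is non-empty because $\alpha<1$, so fixing any such $\tau$ produces a summable geometric series whose total is controlled by a constant times $X\!\bigl(\tfrac{1}{(1-\tau)(R-r)}\bigr)$. A final invocation of doubling absorbs the $(1-\tau)$ factor and yields $Z(r)\le c\,X\!\bigl(\tfrac{1}{R-r}\bigr)$ with $c$ depending only on $\alpha$ and the doubling constant of $X$, as required.
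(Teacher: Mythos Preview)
The paper does not supply its own proof of this lemma; it is quoted from \cite[Lemma~4.2]{hahato} and used as a black box. Your argument is the standard geometric iteration for this type of absorption lemma and is correct: the choice of the geometric sequence $t_k$, the unfolding of the recursion, the use of boundedness of $Z$ to kill the remainder, and the doubling bound $X(\lambda s)\le C\lambda^{\beta}X(s)$ combine to give a convergent series once $\tau\in(\alpha^{1/\beta},1)$ is fixed, after which a final doubling step removes the factor $(1-\tau)$.

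One minor caveat: the phrase ``converges precisely when $\tau\in(\alpha^{1/\beta},1)$'' tacitly assumes $\beta>0$; if the doubling constant happens to be $C_X=1$ (so $\beta=0$), the bound $X(\lambda s)\le C\,X(s)$ holds uniformly in $\lambda\ge 1$ and the series $\sum_k \alpha^k$ converges for any $\tau\in(0,1)$, so the conclusion is even easier. This edge case does not affect the validity of the argument.
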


In the next Lemma we show how to control the oscillation of a solution $v\in \mathcal{K}_{\psi}(\Omega)$ across the contact set via the oscillation of the obstacle $\psi$.
\begin{lemma}\label{osc} Suppose $A:\Omega\times\rn\to\rn$ satisfies \eqref{A-Car}--\eqref{A-monotone} with some
{$\vp \in \Phi_c(\Omega)$} satisfying assumptions (A0), (A1), (aInc)$_p$, (aDec)$_q$ with some $1<p\leq q\leq n$. Let $K\subset \Omega$ be a compact set and $v\in \mathcal{K}_{\psi}(\Omega)$ be a solution to problem \eqref{obs} with obstacle $\psi\in C(\Omega)$ such that
\begin{flalign}\label{43}
\snr{\psi(x_{1})-\psi(x_{2})}\le C_\psi\snr{x_{1}-x_{2}}^{\theta} \ \ \mbox{for all} \ \ x_{1}\in K,\  x_{2}\in \Omega,
\end{flalign}
where $\theta \in (0,1]$ and $C_\psi$ is a positive, absolute constant. Assume further that $\mu=-\diver\, A(x,Dv)$. Then, for  all $\bar{x}\in K$ and any $\rr\in \left(0,\frac{1}{40}\min\left\{1,\dist\{K,\partial\Omega\}\right\}\right)$ small enough for  $\rho_{{\vp(\cdot)};B_{18\rr}(\bar{x})}(|D v|) \le 1$, it holds 
\begin{flalign*}
\mu(B_{\rr}(\bar{x}))\le c(\data,{\psi})\, \rr^{-\theta}\int_{B_\rr(\bar{x})} \vp(x, \rr^{\theta-1})\, \dx.
\end{flalign*}
%where $\sigma:= 1-\tfrac{\beta_{0}}{q}(p-1)$.
\end{lemma}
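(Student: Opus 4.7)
The strategy is to estimate $\mu(B_\rr(\bar x))$ by testing the distributional identity $\mu=-\diver\, A(\cdot,Dv)$ against a function of size $\rr^\theta$, then process the right-hand side through the growth \eqref{A}, Fenchel--Young's inequality, the Caccioppoli-oscillation bound of Lemma~\ref{lem-cacc-osc}, and condition (A1-$n$) \eqref{A1-n} to transfer the estimate from a slightly enlarged ball back to $B_\rr(\bar x)$. The trick behind the exponent $\theta$ appearing on the right-hand side is to scale the test function by $\rr^\theta$ (not by $1$), so that $|D\phi|$ sees the correct intrinsic power $\rr^{\theta-1}$.

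First, I record that since $v$ solves the obstacle problem \eqref{obs} with continuous obstacle $\psi$, it is a supersolution to \eqref{A0} (Remark~\ref{rem-i}), so $\mu$ is a non-negative Radon measure with $\int \phi\,d\mu = \int A(x,Dv)\cdot D\phi\,\dx$ for every $\phi \in C_c^{\infty}(\Omega)$. Assumption \eqref{43} applied at $\bar x\in K$ yields $\osc_{B_{4\rr}(\bar x)} \psi \le 2 C_\psi(4\rr)^{\theta}\le c\rr^\theta$, and combined with the smallness $\rho_{\vp(\cdot);B_{18\rr}(\bar x)}(|Dv|)\le 1$ this allows one to invoke a local version of the H\"older regularity of Proposition~\ref{prop:hold} to obtain
\[
\osc_{B_{4\rr}(\bar x)} v \le C_* \rr^{\theta}, \qquad C_* = C_*(\data,\psi).
\]

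Now fix $\eta \in C_c^{\infty}(B_{2\rr}(\bar x))$ with $0 \le \eta \le 1$, $\eta\equiv 1$ on $B_\rr(\bar x)$ and $|D\eta|\le 2/\rr$, and set $\phi := \rr^{\theta}\eta$. Since $\phi \ge \rr^\theta \chi_{B_\rr(\bar x)}$ and $|D\phi| \le 2\rr^{\theta-1}$, plugging $\phi$ into the pairing and using \eqref{A} gives
\[
\rr^{\theta}\,\mu(B_\rr(\bar x)) \le \int \phi\,d\mu \le c_1\int_{B_{2\rr}} \frac{\vp(x,|Dv|)}{|Dv|}\,|D\phi|\,\dx.
\]
Applying Young's inequality $ab \le \vp^{*}(x,a)+\vp(x,b)$ with $a = \vp(x,|Dv|)/|Dv|$ and $b = |D\phi|$, together with \eqref{ex3} and doubling (aDec)$_q$, one obtains
\[
\rr^{\theta}\,\mu(B_\rr(\bar x)) \le c\int_{B_{2\rr}}\vp(x,|Dv|)\,\dx + c\int_{B_{2\rr}}\vp(x,\rr^{\theta-1})\,\dx.
\]
Since $v-\inf_{B_{4\rr}(\bar x)}v$ is a non-negative supersolution on $B_{4\rr}(\bar x)$, Lemma~\ref{lem-cacc-osc} together with the oscillation bound and doubling yields
\[
\int_{B_{2\rr}}\vp(x,|Dv|)\,\dx \le c\int_{B_{4\rr}} \vp\!\left(x,\tfrac{\osc_{B_{4\rr}}v}{\rr}\right)\dx \le c\int_{B_{4\rr}}\vp(x,\rr^{\theta-1})\,\dx.
\]

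It remains to replace the integral over $B_{4\rr}$ by one over $B_\rr$. For $\rr$ small enough that $\rr^{\theta}\le \tfrac14$ (which is compatible with the radius restriction $\rr < \tfrac{1}{40}$), the argument $\rr^{\theta-1}$ lies in $[1,\,1/(4\rr)]$, so condition (A1-$n$), \eqref{A1-n}, applied on $B_{4\rr}(\bar x)$ gives $\vp^{+}_{B_{4\rr}}(\rr^{\theta-1}) \le c\,\vp^{-}_{B_{4\rr}}(\rr^{\theta-1}) \le c\,\vp(y,\rr^{\theta-1})$ for every $y\in B_\rr(\bar x)\subset B_{4\rr}(\bar x)$; averaging in $y$ gives $\int_{B_{4\rr}}\vp(x,\rr^{\theta-1})\,\dx\le c\int_{B_\rr}\vp(x,\rr^{\theta-1})\,\dx$, and dividing the combined estimate by $\rr^\theta$ delivers the claim. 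The main obstacle I foresee is the oscillation bound $\osc_{B_{4\rr}}v\le c\rr^\theta$: assumption \eqref{43} only controls $\psi$ with one endpoint in $K$, not globally, so Proposition~\ref{prop:hold} cannot be quoted verbatim. I would handle this either by reading out of the proof of Proposition~\ref{prop:hold} that the constant depends only on the local oscillation of $\psi$ on balls centered at $\bar x\in K$ (which is under control via \eqref{43}), or by replacing $\psi$ locally with the globally H\"older truncation $\widetilde\psi(x):= \min\{\psi(x),\,\psi(\bar x)+C_\psi|x-\bar x|^{\theta}\}$ near $\bar x$ and noting that this does not alter the obstacle problem in $B_{4\rr}(\bar x)$.
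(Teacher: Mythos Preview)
Your back half (the test function scaled by $\rr^\theta$, Fenchel--Young with \eqref{ex3}, Lemma~\ref{lem-cacc-osc}, and the use of \eqref{A1-n} to pass from $B_{4\rr}$ back to $B_\rr$) is correct and is exactly how the paper finishes. The genuine gap is the oscillation bound $\osc_{B_{4\rr}(\bar x)} v \le C_*\rr^\theta$, which you yourself flag. Neither of your two proposed repairs closes it.

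For repair (b): by \eqref{43} with $x_1=\bar x\in K$ you already have $\psi(x)\le \psi(\bar x)+C_\psi|x-\bar x|^\theta$ for every $x\in\Omega$, so $\widetilde\psi=\min\{\psi,\psi(\bar x)+C_\psi|x-\bar x|^\theta\}\equiv\psi$. The truncation changes nothing, and $\psi$ itself need not lie in $C^{0,\theta}(\Omega)$: condition \eqref{43} only pins down $\psi$ relative to its values on $K$ and says nothing about $|\psi(y_1)-\psi(y_2)|$ for $y_1,y_2$ both outside $K$. Hence Proposition~\ref{prop:hold} cannot be invoked as a black box.

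For repair (a): the idea that ``only local oscillation of $\psi$ matters'' is right, but making this precise is the entire content of the lemma, and the paper spends most of the proof on it. The paper's argument is not an iteration (which would need $\osc\psi\lesssim s^\theta$ at every small scale $s$, unavailable here) but a \emph{single-scale} comparison anchored at a contact point. Concretely: if $B_\rr(\bar x)$ misses the contact set, $\mu(B_\rr(\bar x))=0$; otherwise pick $x_0\in B_\rr(\bar x)$ with $v(x_0)=\psi(x_0)$, combine the local boundedness estimate \eqref{v-bounded} (with level $\ell$ chosen via $\inf_{B_{8\rr}(x_0)}v$ and $\sup\psi$, splitting into two cases) with the weak Harnack inequality \eqref{32} for the non-negative supersolution $v+\vartheta_-$, and run a short iteration (Lemma~\ref{lem:iter}) to downgrade the exponent $q$ in \eqref{v-bounded} to the exponent $h_0$ of \eqref{32}. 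This yields $\osc_{B_{4\rr}(x_0)}v\le c\,\osc_{B_{16\rr}(x_0)}\psi+c\rr$, and only then does \eqref{43} enter, via $\bar x\in B_{16\rr}(x_0)\cap K$, to bound $\osc_{B_{16\rr}(x_0)}\psi\le c\rr^\theta$. Centering at a contact point is essential: without $v(x_0)=\psi(x_0)$ there is no way to tie $\inf v$ to $\inf\psi$ at one fixed scale, which is what makes the sup/inf chain close up.
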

\begin{proof}
Since $v\in \mathcal{K}_{\psi}(\Omega)$ is a solution to problem \eqref{obs} and $\psi \in C(\Omega)$, by Theorem \ref{T4}, the second part, $v$ is continuous. Given that $v$ is also a supersolution to \eqref{A0}, it realizes \eqref{sux}, so Riesz's representation theorem renders the existence of a unique, non-negative Radon measure $\mu$ such that for all $\eta \in C^{\infty}_{c}(\Omega)$ there holds
\begin{flalign}\label{meas}
\int_{\Omega}A(x,Dv)\cdot D\eta \ \dx -\int_{\Omega}\eta \ \d\mu=0 \ \ \mbox{in} \ \ \Omega.
\end{flalign}
Let us fix $\bar{x}\in K$ and $B_{\rr}(\bar{x})\subset \Omega$ with small $\rr$ to be fixed, such that $B_{16\rr}(\bar{x})\Subset \Omega$.
We define
\begin{align*}
\Omega_0: = \{x \in \Omega : v(x) > \psi(x)\}, \quad \Omega_c := \{x \in \Omega : v(x) = \psi(x)\}.
\end{align*}
If {$B_{\rr}(\bar{x})$ does not touch the contact set,} i.e. $B_{\rr}(\bar{x})\cap\Omega_{c}=\emptyset$, then, by Theorem \ref{T4}, the second part, the function $v$ is $\mathcal{A}_{\vp(\cdot)}$-harmonic in $B_{\rr}(\bar{x})$ and $\mu(B_{\rr}(\bar{x}))\equiv 0$. Hence, it suffices to consider only the case when $B_{\rr}\cap \Omega_{c}\not = \emptyset$. Let $x_{0}\in B_{\rr}(\bar{x})\cap\Omega_{c}$ and notice that, by monotonicity, $\mu(B_{\rr}(\bar{x}))\le \mu(B_{2\rr}(x_{0}))$. Note that by \eqref{43} it follows that
\begin{flalign}\label{42}
%\osc_{x\in B_{16\rr}(x_{0})}\psi(x)=& \left(\sup_{x\in B_{16\rr}(x_{0})}\psi(x)-\psi(\bar{x})\right)+ \left(\psi(\bar{x})-\psi(x_0)\right) \\\nonumber
%&+ \left(\psi(x_0)-\psi(\bar{x})\right)+ \left(\psi(\bar{x})-\inf_{x\in B_{16\rr}(x_{0})}\psi(x)\right) 
\osc_{x\in B_{16\rr}(x_{0})}\psi(x)=& \left(\sup_{x\in B_{16\rr}(x_{0})}\psi(x)-\psi(\bar x)\right) + \left(\psi(\bar x)-\inf_{x\in B_{16\rr}(x_{0})}\psi(x)\right) 
\le \  c(\theta,C_\psi)\rr^{\theta}. 
\end{flalign}
The final claim will be obtained by the use of Caccioppoli estimate from Lemma~\ref{lem-cacc-osc} involving $\osc_{x\in B_{4\rr}(x_{0})}v(x)$, which we need to estimate. We set
\begin{align*}
&\overline{v}(\rr) := \sup_{x \in B_\rr(x_0)} v(x), 
&\underline{v}(\rr):= \inf_{x \in B_\rr(x_0)} v(x), \qquad\qquad\\ 
&\overline{\psi}(\rr) := \sup_{x \in B_\rr(x_0)} \psi(x), & \underline{\psi}(\rr) := \inf_{x\in B_\rr(x_0)} \psi(x),\qquad \quad\ \ \\
&\vartheta_+:=\osc_{x\in B_{16\rr}(x_{0})}\psi(x)+\underline v(8\rr),   &\vartheta_{-}:=\osc_{x\in B_{16\rr}(x_{0})}\psi(x)-\underline v(8\rr)\ 
\end{align*}
and begin with noticing that for all $x\in B_{8\rr}(x_{0})$,
\begin{flalign}\label{44a}
\ v(x)-\vartheta_{+}&\le v(x)+
\osc_{x\in B_{16\rr}(x_{0})}\psi(x)-\inf_{x\in B_{8\rr}(x_{0})}v(x)=v(x)+\vartheta_{-},\\
\ v(x)+\vartheta_{-}&\ge 0.\label{44b}
\end{flalign} Additionally since $
\underline \psi(16\rr)\le {\underline \psi(8\rr)}\le \underline v(8\rr)\le v(x_{0})=\psi(x_{0})\le \overline \psi(16\rr),$
we infer that $ \nr{\psi}_{L^{\infty}(B_{16\rr}(x_{0}))}\le \vartheta_+$.  Moreover, due to~\eqref{42} and~\eqref{44a} it holds
\begin{align}\label{est}
    \sup_{x\in B_{4\rr}(x_{0})}(v(x)-\vartheta_{+})_{+} \leq c  \sup_{x\in B_{4\rr}(x_{0})}(v(x)-\underline{v}(8\rr))_{+} + c\,\rr^\theta.
\end{align}
Heading towards the estimate
\begin{flalign}\label{sup-inf}
\sup_{x\in B_{4\rr}(x_{0})}(v(x)-\vartheta_{+})_{+}&\leq  c \left( \eif_{B_{4\rr}(x_0)} (v(x)+\vartheta_{-}) +  \rr\right) + c\,\rr^\theta,
\end{flalign}
we show that for every $\sigma \in [\frac{1}{2},1)$ that
\begin{align}\label{sup-est}
     \sup_{x\in B_{8\sigma \rr}(x_{0})}(v(x)-\underline{v}(8\rr))_{+} &\leq \frac{c}{ (1-\sigma)^{4nq^2}} \left[ \left( \mint_{B_{8 \rr}(x_0)}  (v-\underline{v}(8 \rr))_+^q \ \dx \right)^{1/q}  + \rr \right] + c \rr^\theta.
\end{align}
This is almost the same as in \eqref{v-bounded}, but there is no average term. The lack of average term allows us to upgrade the exponent $q$ to any positive exponent $h$ later on. We proceed in two cases:  $\underline{v}(8 \rr) \geq \overline{\psi}(16\rr)$ or  $\underline{v}(8 \rr) \leq \overline{\psi}(16  \rr)$. 

\medskip

\noindent \textit{Case 1: $\underline{v}(8 \rr) \geq \overline{\psi}(16 \rr)$.} This case is simpler, as making use of \eqref{v-bounded} and choosing $\ell = \underline{v}(8 \rr)$ we get
\begin{align*}
    \sup_{x\in B_{8\sigma \rr}(x_{0})}(v(x)-\underline{v}(8  \rr))_{+} \leq \frac{c}{ (1-\sigma)^{4nq^2}} \left[ \left( \mint_{B_{8 \rr}(x_0)}  (v-\underline{v}(8 \rr))_+^q \ \dx \right)^{1/q} + \rr \right]
\end{align*}
and~\eqref{sup-est} follows. {Note that in this case \eqref{v-bounded} involves no average-term from  as $v-\underline{v}(8\rr)$ is non-negative in $B_{8\rr}$.}
\medskip

\noindent \textit{Case 2: $\underline{v}(8 \rr) \leq \overline{\psi}(16 \rr)$.} Here we use H\"older continuity of $\psi$ and the estimate \eqref{v-bounded} to see that 
\begin{align*}
    \sup_{x\in B_{8\sigma \rr}(x_{0})}(v(x)-\underline{v}(8 \rr))_{+} \leq \sup_{x\in B_{8\sigma \rr}(x_{0})}(v(x)-\underline{\psi}(16 \rr))_{+} \leq \sup_{x\in B_{8\sigma \rr}(x_{0})}(v(x)-\overline{\psi}(16 \rr))_{+} + c \rr^\theta \\
    \leq \frac{c}{ (1-\sigma)^{4nq^2}}\left[ \left( \mint_{B_{8 \rr}(x_0)}  (v-\overline{\psi}(16 \rr))_+^q \ \dx \right)^{1/q} + |(v-\overline{\psi}(16 \rr))_{B_{8 \rr}(x_0)}| +  \rr \right] + c \rr^\theta.
\end{align*}
Since in this case $\underline{v}(8\rr) \leq \overline{\psi}(16 \rr)$ we continue to estimate {the integral}
\begin{align}\label{sup-with-average}
\begin{split}
    \sup_{x\in B_{8\sigma \rr}(x_{0})}(v(x)-\underline{v}(8 \rr))_{+} \leq  \frac{c}{ (1-\sigma)^{4nq^2}} \bigg[ &\left( \mint_{B_{8 \rr}(x_0)}  (v-\underline{v}(8 \rr))_+^q \ \dx \right)^{1/q} \\ &+ |(v-\overline{\psi}(16 \rr))_{B_{8 \rr}(x_0)}| + \rr \bigg] + c \rr^\theta.
\end{split}
\end{align}
Now we focus on the average term. %If it did not exist in the previous estimate, we could replace the exponent $q$ by any positive exponent $h$. 
By the H\"older continuity of $\psi$ we get
\begin{align*}
     |(v-\overline{\psi}(16 \rr))_{B_{8 \rr}(x_0)}| &= \left| \mint_{B_{8 \rr}(x_0)} v- \underline{\psi}(16 \rr) + \underline{\psi}(16 \rr) - \overline{\psi}(16 \rr) \, \dx \right | \\
     &\leq \mint_{B_{8 \rr}(x_0)} v - \underline{\psi}(16 \rr) \ \dx + c \rr^\theta \leq \mint_{B_{8 \rr}(x_0)} v - \overline{\psi}(16 \rr) \ \dx + c \rr^\theta \\
     &\leq \mint_{B_{8 \rr}(x_0)} v - \underline{v}(8 \rr) \ \dx + c  \rr^\theta.
\end{align*}
Since by assumption $q>1$, we can use H\"older's inequality to increase the exponent and arrive to
\begin{align*}
    |(v-\overline{\psi}(16 \rr))_{B_{8 \rr}(x_0)}| \leq \left (\mint_{B_{8 \rr}(x_0)} (v - \underline{v}(8 \rr))_+^q \ \dx\right)^{1/q} + c  \rr^\theta.
\end{align*}
This estimate allows us to absorb the average term to the integral term in \eqref{sup-with-average} so we can conclude with~\eqref{sup-est} in both cases.

\medskip

\noindent \textit{Upgrading inequality~\eqref{sup-est} to have instead of $q$ any $h \in (0,\infty)$.} Since $\vp(\cdot)$ satisfies {also} (aDec)$_{q_0}$ for any $q_0>q$, we can assume that $q>h$. Note that for the previous inequality we do not need the assumption that $q \leq n$. Let $\sigma,\tau>0$ be constants that satisfy $4\rr \leq 8 \sigma \rr <  8\tau \rr \leq 8\rr$ and denote \[Z(\sigma) := \eup_{B_{8\sigma\rr}(x_0)} (v-\underline{v}(8 \rr))_+.\] {Restating \eqref{sup-est} with this notation we get} and recalling that $\tau \leq \rr \leq 1$
\begin{align*}
    Z(\sigma) &\leq c \left(1-\frac{\sigma}{{\tau}} \right)^{-4nq^2} \left( \mint_{B_{8\tau \rr}(x_0)} (v-\underline{v}(8 \rr))_+^q \ \dx \right)^{1/q}  + \tilde c(\psi, \rr, \theta) \\
    & \leq c\left( \dfrac{1}{\tau-\sigma}\right)^{4nq^2} \left( \mint_{B_{8\tau \rr}(x_0)} (v-\underline{v}(8 \rr))_+^q \ \dx \right)^{1/q}  + \tilde c(\psi, \rr, \theta).
\end{align*}
Since $8\tau \rr \in (4\rr,8\rr)$, we see that
\begin{align*}
    \left( \mint_{B_{8\tau \rr}(x_0)} (v-\underline{v}(8 \rr))_+^q \ \dx \right)^{1/q} &\leq c \left(\mint_{B_{8\rr}(x_0)} (v-\underline{v}(8 \rr))_+^h Z(\tau)^{q-h} \ \dx \right)^{1/q} \\
    &\leq c\left( \mint_{B_{8 \rr}(x_0)} (v-\underline{v}(8 \rr))_+^h \ \dx \right)^{1/q} Z(\tau)^\frac{q-h}{q}.
\end{align*}
Now Young's inequality with exponents $\tfrac qh$ and $\frac{q}{q-h}$ yields
\begin{align*}
    Z(\sigma) &\leq c \left(\dfrac{1}{\tau-\sigma}\right)^{4nq^2} \left( \mint_{B_{8 \rr}(x_0)} (v-\underline{v}(8 \rr))_+^h \ \dx \right)^{1/q} Z(\tau)^{\frac{q-h}{q}} + \tilde c(\psi, \rr, \theta) \\
    &\leq \dfrac{ch}{q}\left(\dfrac{1}{\tau-\sigma}\right)^\frac{4nq^3}{h} \left(\mint_{B_{8 \rr}(x_0)} (v-\underline{v}(8\rr))_+^h \ \dx \right)^{1/h} + \tilde c(\psi, \rr, \theta) + \frac{q-h}{q}Z(\tau).
\end{align*}
Denoting the right-hand side without the term $\frac{q-h}{q}Z(\tau)$ as $X\left(\frac{1}{\tau-\sigma}\right)$, we have arrived at the starting point of a classical iteration lemma (Lemma~\ref{lem:iter})
\begin{align*}
    Z(\sigma) \leq X\left(\frac{1}{\tau-\sigma}\right) + \frac{q-h}{q}Z(\tau),
\end{align*}
where $Z$ is bounded, $X$ is doubling and $\frac{q-h}{q} \in (0,1)$.
Performing the iteration we end up with the desired estimate
\begin{align*}
    \sup_{x\in B_{4\rr}(x_{0})}(v(x)-\underline{v}(8\rr))_{+}&\le c\left(\mint_{B_{8\rr(x_0)}}(v(x)-\underline{v}(8\rr))_+^{h} \ \dx\right)^{{\frac{1}{h}}}+ c\rr +c\rr^\theta\quad\text{
for all $h >0$.}
\end{align*}

\medskip

\noindent \textit{The final estimate on $\sup_{x\in B_{4\rr}(x_{0})}(v(x)-\vartheta_{+})_{+}$.} Using~\eqref{est}, choosing $h=h_0$ from~\eqref{32} we may combine the content of the previous display with~\eqref{44a} to get 
\begin{flalign*}
\sup_{x\in B_{4\rr}(x_{0})}(v(x)-\vartheta_{+})_{+}&\leq \sup_{x\in B_{4\rr}(x_{0})}(v(x)-\underline{v}(8\rr))_{+} + c \rr^\theta \\
&\le c\left(\mint_{B_{8\rr(x_0)}}(v(x)-\underline{v}(8\rr))_+^{h_0} \ \dx\right)^{\frac{1}{h_0}}+ c\rr +c\rr^\theta\\
&\le c\left(\mint_{B_{8\rr(x_0)}}(v(x)+\vartheta_{-})^{h_0} \ \dx\right)^{\frac{1}{h_0}}+ c\rr +c\rr^\theta\\
&\le c \left( \eif_{B_{4\rr}(x_0)} (v(x)+\vartheta_{-}) +  \rr\right) + c\rr^\theta,
\end{flalign*}
where {\it $c=c(\data,{\psi})$} and $h_{0}$ is the exponent appearing in \eqref{32}. As $\rr <1$ and $\theta <1$, it follows that $\rr \leq \rr^\theta$ and we get~\eqref{sup-inf}. 

\medskip

\noindent \textit{Estimate on $\osc_{x\in B_{4\rr}(x_{0})}v(x)$.} 
From $\eqref{44b}$, we see that $v+\vartheta_{-}$ is a non-negative supersolution to \eqref{A0} in $B_{8\rr}(x_{0})$, thus, using the definition of $\vartheta_{+}$,~\eqref{sup-inf}, and recalling that $v(x_0)=\psi(x_0)$ we have 
\begin{align*}
\osc_{x\in B_{4\rr}(x_{0})}v(x)&=\osc_{x\in B_{16\rr}(x_{0})}\psi(x)+\sup_{x\in B_{4\rr}(x_{0})}v(x)-\inf_{x\in B_{4\rr}(x_{0})}v(x)-\osc_{x\in B_{16\rr}(x_{0})}\psi(x)\\
&\le\osc_{x\in B_{16\rr}(x_{0})}\psi(x)+\sup_{x\in B_{4\rr}(x_{0})}(v(x)-\vartheta_{+})_{+}\\
&\le\osc_{x\in B_{16\rr}(x_{0})}\psi(x)+ c\inf_{x\in B_{4\rr}(x_{0})}(v(x)+\vartheta_{-})+c\rr^\theta \\
&\le c\left(\osc_{x\in B_{16\rr}(x_{0})}\psi(x)+\inf_{x\in B_{4\rr}(x_{0})}\left( v(x)+\osc_{x\in B_{16\rr}(x_{0})}\psi(x)-\inf_{x\in B_{8\rr}(x_{0})} v(x)\right)\right)+c\rr^\theta\\
&\le c \left(\osc_{x\in B_{16\rr}(x_{0})}\psi(x)+   \psi(x_0)-\inf_{x\in B_{16\rr}(x_{0})} \psi(x)\right)+c\rr^\theta\\ 
&\le c\osc_{x\in B_{16\rr}(x_{0})}\psi(x)+c\rr^\theta,
\end{align*}

which due to \eqref{42} implies that
\begin{flalign}\label{45}
\osc_{x\in B_{4\rr}(x_{0})}v(x) %\le c\osc_{x\in B_{8\rr}(x_{0})}\psi(x)+ c\rr
\le c\rr^{\theta}\qquad\text{with }\ c=c(\data,\psi).
\end{flalign}

\medskip

\noindent \textit{Conclusion. } Now, set 
 \begin{equation}
     \label{v-tilde}\tilde{v}:=v-\inf_{x\in B_{4\rr}(x_{0})}v(x),
 \end{equation} notice that $D\tilde{v}=Dv$ and pick $\eta\in C^{1}_{c}(B_{4\rr}(x_{0}))$ such that $\chi_{B_{2\rr}(x_{0})}\le \eta\le \chi_{B_{4\rr}(x_{0})}$ and $\snr{D\eta}\le \rr^{-1}$. Clearly, recalling also Theorem \ref{T4}, $\tilde{v}$ is a bounded supersolution to \eqref{A0} which is non-negative in $B_{4\rr}(x_{0})$, thus, by Young inequality, we obtain
%and the fact that $(q-1)p/(p-1)\ge q$,
we obtain  
\begin{flalign}\label{46}
\mu(B_{2\rr}(x_{0}))\le& \int_{B_{4\rr}(x_{0})}\eta^{q} \ \d\mu=q\int_{B_{4\rr}(x_{0})}\eta^{q-1}A(x,Dv)\cdot D\eta \ \dx\nonumber\\
&\leq c_2q \int_{B_{4\rr}(x_0)}\dfrac{1}{\rr^\theta} \dfrac{\vp(x, |Dv|)}{ |Dv|} \rr^\theta|D\eta| \ \dx\nonumber\\
& \leq c_2q \int_{B_{4\rr}(x_0)}\dfrac{1}{\rr^\theta} \vp^{\ast}\left (x,\dfrac{\vp(x, |Dv|)}{|Dv|} \right ) + \dfrac{1}{\rr^\theta}\vp(x, \rr^\theta |D\eta|) \ \dx.
\end{flalign}
Since $\vp$ is convex, we can estimate further as follows 
\begin{align*}
\mu(B_{2\rr}(x_0)) \leq  c \int_{B_{4\rr}(x_0)} \dfrac{1}{\rr^\theta} \vp(x, |Dv|) + \dfrac{1}{\rr^\theta} \vp(x,\rr^\theta |D \eta|) \ \dx.
\end{align*} 
Now, for the first term on the right-hand side we use Caccioppoli inequality  from Lemma~\ref{lem-cacc-osc} and oscillation estimate \eqref{45}, for the second term we use the cut-off estimate for $|D\eta|$. This yields the same estimates for each of the terms from the previous display. Namely, we have 
\begin{align*}
\mu(B_{2\rr}(x_0)) &\leq  c \int_{B_{4\rr}(x_0)} \dfrac{1}{\rr^\theta} \vp\left(x,    \frac{\osc_{x \in B_{4 \rr}(x_0)} v(x)}{\rr}\right) + \dfrac{1}{\rr^\theta} \vp\left(x, \rr^{\theta-1}\right) \ \dx \\
& \leq  2 c \rr^{-\theta}\int_{B_{8\rr}(x_0)}   \vp\left(x, \rr^{\theta-1}\right)   \dx.
\end{align*}
Since $\vp$ satisfies assumptions in \eqref{A1-n} and the fact that $B_{\rr}(\bar{x}) \subset B_{2\rr}(x_0)$ allow us to change the domain of integration
\begin{align*}
\rr^{-\theta}\int_{B_{8\rr}(x_0)} \vp(x, \rr^{\theta-1}) \ \dx &\leq c \rr^{n-\theta} \vp_{B_{8\rr}(x_0)}^{-}(\rr^{\theta-1})\\
& \leq c \rr^{n-\theta} \vp^{-}_{B_{\rr}(\bar{x})}(\rr^{\theta-1}) \leq c \rr^{-\theta}\int_{B_{\rr}(\bar{x})} \vp\left(x, \rr^{\theta-1}\right) \ \dx.
\end{align*}
Previous two displays combined finishes the proof. 
\end{proof}

\subsection{Proof of Theorem \ref{T6}} We prove here our main result on the removability of singularities for solutions to \eqref{A0}. Without loss of generality we suppose $U\Subset \Omega$ is an open set with $U\cap E \not =\emptyset$, because otherwise the measure vanishes and there is nothing to prove. We will show that when $U\cap E \not =\emptyset$, we also have $\mu(E\cap U)=0$ and $\mu(U\setminus E)=0$.

Let $v\in \mathcal{K}_{u}(U)$ be the unique solution to problem \eqref{obs} in $U$ under the conditions of Proposition~\ref{prop:ex-uni}. As noticed in the beginning of the proof of {Lemma~\ref{osc}}, we infer by Riesz's representation theorem that $\mu=-\diver\, A(x,Dv)$ is a non-negative Radon measure. Fix a compact set $K\Subset E\cap U$. For any $x_{0}\in K$ and all $\rr\in \left(0,\frac{1}{40}\min\left\{1,\dist\{K,\partial (E\cap U)\}\right\}\right)$ so small  that $\rho_{{\vp(\cdot)};B_{18\rr} (x_0)}(|D v|) \leq 1$, Lemma \ref{osc} yields  the following decay estimate 
\begin{flalign}\label{51}
\mu(B_{\rr}(x_{0}))\le c\rr^{-\theta}  \int_{B_{\rr}(x_0)} \vp(x, \rr^{\theta-1}) \ \dx\quad \text{with }c=c(\texttt{data},{u}).
\end{flalign}

We assume that $\mathcal{H}_{\cJp}(E)=0$,  so also  $\mathcal{H}_{\cJp}(K)=0$. Therefore, for any $\varepsilon>0$ the set $K$ can be covered with balls $B_{\rr_{j}}(x_{j})$ with radii $\rr_{j}$ less than $\frac{1}{80}\min\left\{1,\dist\{K,\partial (E\cap U)\}\right\}$ and $(x_{j})_{j \in \mathbb{N}}\subset K$, so that
\begin{flalign*}
\mu(K)\le \sum_{j=1}^{\infty}\mu(B_{\rr_{j}}(x_{j}))\stackrel{\eqref{51}}{\le} c\sum_{j=1}^{\infty} \rr^{-\theta}\int_{B_{\rr_{j}}(x_{j})}\vp\left(x,{\rr}^{\theta-1}\right) \ \dx \le \varepsilon.
\end{flalign*}
Since $\varepsilon$ is arbitrary, we conclude that $\mu(K)=0$. Furthermore, since $\mu$ is a~Radon measure, $K$ is an arbitrary compact subset of $E\cap U$ and $E\cap U$ is $\mu$-measurable, also $\mu(E\cap U)=0$.

In order to prove that $\mu(U\setminus E)=0$ we take $\eta\in W^{1,\vp(\cdot)}_{0}(U\setminus E)$, $\eta(x)\ge 0$ for a.e. $x\in U\setminus E$ and, for $s>0$, set $\eta_{s}:=\min\left\{s\eta,v-u\right\}$. Then $\eta_{s}\in W^{1,\vp(\cdot)}_{0}((U\setminus E)\cap \{x\colon v(x)>u(x)\})$. Consequently, by the second claim of Theorem \ref{T4} we get that
\begin{flalign}\label{x1} 
\int_{U\setminus E}A(x,Dv)\cdot D\eta_{s} \ \dx=\int_{(U\setminus E)\cap \{v>u\}}A(x,Dv)\cdot D\eta_{s} \ \dx=0.
\end{flalign}
On the other hand, since $u$ solves \eqref{A0} in $U\setminus E$, then
\begin{flalign}\label{x2}
\int_{U\setminus E}A(x,Du)\cdot D\eta_{s} \ \dx =0.
\end{flalign}
We subtract \eqref{x2} from \eqref{x1} and obtain
\begin{flalign}\label{x3}
\int_{U\setminus E}\left(A(x,Dv)-A(x,Du)\right)\cdot D\eta_{s} \ \dx=0.
\end{flalign}
Since the operator is monotone, from \eqref{x1}-\eqref{x3} we infer that
\begin{flalign*}
\int_{(U\setminus E)\cap\{s\eta\le v-u\}}&\left(A(x,Dv)-A(x,Du)\right)\cdot D\eta \ \dx\nonumber \\
&\le -\frac{1}{s}\int_{(U\setminus E)\cap\{ s\eta>v-u\}}\left(A(x,Dv)-A(x,Du)\right)\cdot (Dv-Du) \ \dx\le 0.
\end{flalign*}
By the Lebesgue's dominated convergence theorem we conclude that 
\begin{flalign*}
\int_{U\setminus E}\left(A(x,Dv)-A(x,Du)\right)\cdot D\eta \ \dx\le 0.
\end{flalign*}
Since $u$ is a solution to \eqref{A0}, we obtain
\begin{flalign*}
\int_{U\setminus E}A(x,Dv)\cdot D\eta \ \dx \le 0\quad\text{for all non-negative $\eta\in W^{1,\vp(\cdot)}_{0}(U\setminus E)$.} 
\end{flalign*}
Therefore, recalling \eqref{meas}, $\mu(U\setminus E)=0$. Hence, $\mu(U)=0$, which means
\begin{flalign}\label{60}
\int_{U}A(x,Dv)\cdot Dw \ \dx =0 \ \ \mbox{for all} \ \ w\in W^{1,\vp(\cdot)}_{0}(U).
\end{flalign}
Let us set $\hat{A}(x,z):=-A(x,-z)$ and notice that  $\hat{A}$ satisfies \eqref{A}. We consider $\hat{v}\in W^{1,\vp(\cdot)}(U)$ being a solution to the obstacle problem
\begin{flalign*}
\int_{\Omega}\hat{A}(x,D\hat{v})\cdot (Dw-D\hat{v}) \ \dx\ge 0 \quad\text{ for all }\ w\in \mathcal{K}_{-u}(U).
\end{flalign*}
By the arguments as above $\hat{v}$ satisfies
\begin{flalign}\label{61}
\int_{U}-A(x,-D\hat{v})\cdot Dw \ \dx=\int_{U}\hat{A}(x,D\hat{v})\cdot Dw \ \dx=0\quad\text{
for all $\ w\in W^{1,\vp(\cdot)}_{0}(U)$.}
\end{flalign} 

What remains to show is that $v=-\hat{v}=u$. We define $\bar{w}:=v+\hat{v} \in W^{1,\vp(\cdot)}(U)$ and use it as a test function for \eqref{60} and \eqref{61}. Adding the resultant equations we obtain
\begin{flalign*}
0\le  \int_{U}\left(A(x,Dv)-A(x,-D\hat{v})\right)\cdot(Dv+D\hat{v}) \ \dx =0.
\end{flalign*} due to the monotonicity of the operator. Consequently, $D(v+\hat{v})=0$ a.e. in $U$. Since $v+\hat{v}\in W^{1,\vp(\cdot)}_{0}(U)$, we have $v=-\hat{v}$ a.e. in $U$ and, by {Theorem~\ref{T4}}, $-\hat{v}\le u\le v$ a.e. in $U$. From \eqref{60} and \eqref{61} it results that $u$ solves \eqref{A0} in $U$. 

Since $U$ is arbitrary, we can conclude that $u$ solves \eqref{A0} in $\Omega$, thus $E$ is removable.\qed

\end{document}